\documentclass{amsproc}
\usepackage{euscript, graphicx, epstopdf}
\usepackage{cases}
\usepackage{mathrsfs}
\usepackage{bbm}
\usepackage{amssymb}
\usepackage{txfonts}
\usepackage{amscd}
\usepackage{amsfonts,latexsym,amsmath,amsxtra,mathdots,latexsym,mathabx}
\usepackage[all,cmtip]{xy}
\usepackage{color}
\usepackage{colordvi}
\usepackage{multicol}
\usepackage{hyperref}
\usepackage{tikz}
\usepackage{float}
\usepackage{setspace, floatflt}

\allowdisplaybreaks

\DeclareFontFamily{U}{matha}{\hyphenchar\font45}
\DeclareFontShape{U}{matha}{m}{n}{
	<5> <6> <7> <8> <9> <10> gen * matha
	<10.95> matha10 <12> <14.4> <17.28> <20.74> <24.88> matha12
}{}
\DeclareSymbolFont{matha}{U}{matha}{m}{n}

\DeclareMathSymbol{\Lt}{3}{matha}{"CE}
\DeclareMathSymbol{\Gt}{3}{matha}{"CF}

\DeclareFontFamily{U}{mathb}{\hyphenchar\font45}
\DeclareFontShape{U}{mathb}{m}{n}{
	<-6> mathb5 <6-7> mathb6 <7-8> mathb7
	<8-9> mathb8 <9-10> mathb9
	<10-12> mathb10 <12-> mathb12
}{}
\DeclareSymbolFont{mathb}{U}{mathb}{m}{n}
\DeclareMathSymbol{\llcurly}{\mathrel}{mathb}{"CE}
\DeclareMathSymbol{\ggcurly}{\mathrel}{mathb}{"CF}

\DeclareSymbolFont{mathc}{OML}{txmi}{m}{it}
\DeclareMathSymbol{\varvv}{\mathord}{mathc}{118} 
\DeclareMathSymbol{\varnu}{\mathord}{mathc}{"17}

\newcommand{\BC}{{\mathbb {C}}}

\newcommand{\BQ}{{\mathbb {Q}}} \newcommand{\BR}{{\mathbb {R}}}

\newcommand{\GL}{{\mathrm {GL}}} \newcommand{\PGL}{{\mathrm {PGL}}}
\newcommand{\SL}{{\mathrm {SL}}} 

\newcommand{\SU}{{\mathrm{SU}}} \newcommand{\Tr}{{\mathrm{Tr}}}

\newcommand{\ov}{\overline}

\newcommand{\ra}{\rightarrow}

\def\fra{\mathfrak{a}}

\def\-{^{-1}}

\def\lp {\left (}
\def\rp {\right )}

\def\SSS{\text{\mbox{\larger[1]$\text{\usefont{U}{BOONDOX-calo}{m}{n}S}$}}\hskip 1pt}
\def\SS{\raisebox{- 2 \depth}{$\SSS$}}
\def\SSH{\text{\mbox{\larger[1]$\text{\usefont{U}{BOONDOX-calo}{m}{n}H}$}}\hskip 1pt}
\def\SH{\raisebox{- 2 \depth}{$\SSH$}}
\def\SSB{\text{\mbox{\larger[1]$\text{\usefont{U}{BOONDOX-calo}{m}{n}B}$}}\hskip 1pt}
\def\SB{\raisebox{- 3 \depth}{$\SSB$}}
\def\boldJ {\boldsymbol J}

\renewcommand{\Im}{{\mathrm{Im}\,}}
\renewcommand{\Re}{{\mathrm{Re}\,}}

\def\bfJ {\boldsymbol J}

\def\bfD {\boldsymbol D}
\def\bfE {\boldsymbol E}
\def\bfF {\boldsymbol F}
\def\bfG {\boldsymbol G}
\def\bfI {\boldsymbol I}
\def\bfM {\boldsymbol M}
\def\bfP {\boldsymbol P}
\def\bfR {\boldsymbol R}

\def\nwedge {\hskip - 2 pt \wedge \hskip - 2 pt }
\def\shskip{\hskip 0.5 pt}
\def\tw{\textit{w}}

\def\tv{\textit{v}}

\newcommand{\bfrac}[2]{\text{\Large$\frac {#1} {#2}$}}

\makeatletter
\g@addto@macro\normalsize{\setlength\abovedisplayskip{3pt}}
\makeatother

\makeatletter
\g@addto@macro\normalsize{\setlength\belowdisplayskip{3pt}}
\makeatother

\newcommand{\delete}[1]{}

\theoremstyle{plain}

\newtheorem{thm}{Theorem}[section] \newtheorem{cor}[thm]{Corollary}
\newtheorem{lem}[thm]{Lemma}  \newtheorem{prop}[thm]{Proposition}

\newtheorem {rem}[thm]{Remark}

\numberwithin{equation}{section}
\newtheorem*{acknowledgement}{Acknowledgements}

\begin{document}

	\title[Bessel functions and Beyond Endoscopy]{On the Fourier transform of regularized Bessel functions on complex numbers and Beyond Endoscopy over number fields}

	\author{Zhi Qi}
	\address{School of Mathematical Sciences\\ Zhejiang University\\Hangzhou, 310027\\China}
	\email{zhi.qi@zju.edu.cn}

	\subjclass[2010]{33C10, 42B10, 33C05}
	\keywords{Fourier transform, Bessel functions, hypergeometric function, Beyond Endoscopy}

	\begin{abstract}
		In this article, we prove certain Weber-Schafheitlin type integral formulae for Bessel functions over complex numbers. A special case is a formula for the Fourier transform of  regularized Bessel functions on complex numbers. This is applied to extend the work of A. Venkatesh  on Beyond Endoscopy for $\mathrm{Sym}^2$  on $\mathrm{GL}_2$ from totally real to arbitrary number fields. 
	\end{abstract}
	
	\maketitle

	\section{Introduction}
	
	A special case of the discontinuous integrals of Weber and Schafheitlin on the Fourier transform of Bessel functions on $\BR_+ = (0, \infty)$ is as follows (see \cite[13.42 (2), (3)]{Watson}),
	\begin{equation}\label{0eq: Weber-Sch}
\int_0^{\infty} \frac {J_{\varnu } (4 \pi x) e (\pm x y)} {x} d x = \left\{ \begin{split}
&\frac { 2^{\varnu }   }  {\varnu  \big(\sqrt {4 -y^2  } \mp i y \big)^{\varnu  } }, \hskip 10 pt \text{ if } 0 \leqslant y \leqslant 2, \\
& \frac { 2^{\varnu } e^{\pm \frac 1 2 \pi i \varnu }  }  {\varnu  \big( \sqrt {y^2 - 4} + y \big)^{\varnu  } }, \hskip 12.8 pt  \text{ if } y > 2,
\end{split} \right.
	\end{equation}
valid for $\Re \varnu  > 0$, where $J_{\varnu } (z)$ is the Bessel function of the first kind of order $\varnu $ and as usual $e (z) = e^{2 \pi i z}$; it is assumed here that $\arg \big(\hskip -1 pt \sqrt {4 - y^2} \mp i y\big) \in \mp \left[0, \tfrac 1 2 \pi \right]$.\footnote{The first line on the right hand side of  \eqref{0eq: Weber-Sch} should read $(1/\varnu ) \exp \left(\pm i \varnu  \arcsin (y/2) \right)$ by bookkeeping \cite[13.42 (2), (3)]{Watson}, but we feel that the formulation here in  \eqref{0eq: Weber-Sch} is more suggestive.} Subsequently, we shall call a formula of this kind  {\it special Weber-Schafheitlin integral formula}.

In Venkatesh's work on Beyond Endoscopy  \cite{Venkatesh-BeyondEndoscopy}, the second formula in  \eqref{0eq: Weber-Sch} arises in his local computation over $\BR$, particularly, in his analysis for the $B$-transform. The results of Venkatesh were proven for {\it totally real} number fields, but he pointed out that the extension to complex places would only require verifying  a similar formula  for Bessel functions over $\BC$.  This amounts to a ``local fundamental lemma" over $\BC$. Unfortunately, it seems to resist a proof in every direct way---Venkatesh was not able to prove it at that time.

The purpose of this paper is to establish an integral formula for Bessel functions over complex numbers which is analogous to the special  Weber-Schafheitlin formula as in \eqref{0eq: Weber-Sch} (after regularization). Our approach is a rather indirect method that combines asymptotic analysis and differential equations. As an application, the validity of Venkatesh's work on Beyond Endoscopy is extended from totally real to arbitrary number fields. 

\subsection{Analysis over real numbers} \label{sec: analysis over R} \

\vskip 5 pt

\subsubsection*{Bessel kernels and the special Weber-Schafheitlin formula} 

First of all, some remarks on the special Weber-Schafheitlin formula \eqref{0eq: Weber-Sch} are in order, as the motivation of our investigation of its complex analogue.  

The Bessel function of concern has pure imaginary order $\varnu  = \pm 2it$ ($t$ real). Indeed, the Bessel kernel  in the $B$-transform is $B_{2it} (4 \pi x)$  (see \cite[\S 6.7.2]{Venkatesh-BeyondEndoscopy}), with
\begin{align}
B_{\varnu } (  x) =  \frac {1} {\sin (\pi \varnu /2)} \lp J_{-\varnu } (  x) - J_{\varnu } (  x) \rp, \quad x \in \BR_+,
\end{align}
and the $B$-transform is defined by
\begin{align}
\phi (x) = \frac 1 2 \int_{-\infty}^{\infty} B_{2it} (4 \pi x) h(t) \sinh (\pi t) t d t.
\end{align}

The formula  \eqref{0eq: Weber-Sch} however is only valid for $\Re \varnu  > 0$. The reason is that $J_{\varnu } (4 \pi x) \sim (2 \pi x )^{\shskip\varnu }/ \Gamma (\varnu +1)$ as $x \ra 0$ (see \cite[3.1 (8)]{Watson}), say for $\Re \varnu  \geqslant 0$, and hence 
the integral in \eqref{0eq: Weber-Sch} is convergent near zero only when $\Re \varnu  > 0$. It should be noted that the integral is always absolutely convergent in the vicinity of infinity since $ J_{\varnu } (4 \pi x) = O (1/\sqrt x)$ as $x \ra \infty$ (see \cite[7.21 (1)]{Watson}).
Nevertheless, this convergence issue  may be easily addressed by shifting  $ t $ to $ t - i \sigma$ or $t + i \sigma $  ($\sigma > 0$) for $J_{  2 i t} (4 \pi x)$ or  $J_{- 2 i t} (4 \pi x)$ respectively. 

The Bessel kernel  in the $K$-transform  is $  (4/ {\pi}) \cos (\pi t) K_{2it} (4 \pi x)$ (see \cite[\S 6.7.3]{Venkatesh-BeyondEndoscopy}), with
\begin{align}\label{0eq: K-Bessel}
 K_{\varnu } (x) =   \frac {\pi} { 2 \sin (\pi \varnu  )} \lp I_{-\varnu } (  x) - I_{\varnu } (x) \rp,  \quad x \in \BR_+,
\end{align}
where $I_{\varnu } (z)$ and $K_{\varnu } (z)$ are the modified Bessel functions\footnote{Note that the $\sin (\pi \varnu )$ is mistakenly written as $\sin (\pi \varnu /2)$  in \cite{Venkatesh-BeyondEndoscopy}.}. The $K$-transform is defined by
\begin{align}
\phi (x) = \frac 1 {\pi} \int_{-\infty}^{\infty} K_{2it} (4 \pi x) h(t) \sinh (2 \pi t) t d t.
\end{align}

The integral in \eqref{0eq: Weber-Sch} would diverge if the $J_{\varnu } (4 \pi x)$ were replaced by  $I_{\varnu } (4 \pi x)$ since it is of exponential growth ($I_{\varnu } (x) \sim \exp (x) / \sqrt {2\pi x}$, \cite[7.23 (2), (3)]{Watson}) as $x \ra \infty$. Although $I_{\varnu } (x)$ and $I_{- \varnu } (x)$ conspire in \eqref{0eq: K-Bessel} so that $ K_{\varnu } (x)$ decays exponentially ($K_{\varnu } (x) \sim \exp (- x) / \sqrt { 2 x / \pi}$,  \cite[7.23 (1)]{Watson})  as $x \ra \infty$, the integral in \eqref{0eq: Weber-Sch} would still diverge if the $J_{\varnu } (4 \pi x)$ were replaced by  $K_{\varnu } (4 \pi x)$. This is because $ I_{\varnu } (4 \pi x) \sim (2 \pi x )^{\shskip\varnu }/ \Gamma (\varnu +1) $ as $x \ra 0$ (see \cite[3.7 (2)]{Watson}) and it would require both $\Re \varnu  > 0$ and $\Re (-\varnu ) > 0$ for the integral to be convergent near zero, which is clearly impossible. At any rate, the special Weber-Schafheitlin formula does {\it not} exist for the Bessel kernel of the $K$-transform. Moreover, in the case $\varnu  = 2 i t$, the order-shifting trick as above applied to $I_{2it} (4\pi x)$ and  $I_{-2it} (4\pi x)$ would not work: after the order-shifting their exponential growth could not be canceled completely and the resulting difference $I_{-2it+2 \sigma} (4\pi x) - I_{2it+2 \sigma} (4\pi x)$ ($\sigma > 0$) still grows exponentially (compare their asymptotic expansions as in \cite[7.23 (2), (3)]{Watson}).  As such, the analysis for  the $K$-transform in \cite[\S 6.7.3]{Venkatesh-BeyondEndoscopy} is quite different from that for the $B$-transform in \cite[\S 6.7.2]{Venkatesh-BeyondEndoscopy}. Instead, Venkatesh uses the Lebedev-Kontorovitch inversion and the Mehler-Sonine integral representation for $K_{2it} (x)$.

Similar obstacles described above for $K_{2it} (4 \pi x) $ would arise when analyzing the Fourier transform of Bessel functions on $\BC$. Thus before we initiate our study over $\BC$ it is necessary to overcome these obstacles over $\BR$ with new ideas. 

\vskip 5 pt

\subsubsection*{Regularizing the special Weber-Schafheitlin formula}

Next, we introduce the regularized Bessel kernels and their special Weber-Schafheitlin formulae and briefly explain how they are applied to unify the analysis for the $B$-transform and that for the $K$-transform. The merit of  unified analyses for  the $B$- and the $K$-transform and also for their Bessel kernels is that their complex extensions are usually admissible.

We first define
\begin{align}
P_{\varnu } (x) = \frac {(x/2)^{\varnu } } {\Gamma (\varnu  + 1)}.
\end{align}
Note that $P_{\varnu } (x)  $ is simply the leading term in the series expansion of $J_{\varnu } (x)$ or  $I_{\varnu } (x)$ at $x = 0$ (see \cite[3.1 (8), 3.7 (2)]{Watson}).
By \cite[3.764]{G-R}, we have 
\begin{align}\label{0eq: Fourier of x}
\int_0^{\infty} x^{\shskip\varnu  - 1} e (\pm x y) d x =  \Gamma (\varnu ) e^{\pm \frac 1 2 \pi i \varnu } (2\pi   y)^{-\varnu }, \hskip 10 pt  0 < \Re \varnu  < 1.
\end{align}

In the following, let $\Re \varnu  = 0$ and, for simplicity,  $\varnu  \neq 0$. The results may be extended to $ \varnu  = 0 $ and even to all $\varnu $ with $|\Re \varnu | < 1 $ by the principle of analytic continuation. 

We now define
\begin{align}\label{0eq: defn of R(x)}
R_{\varnu } (x) = \frac {1} {\sin (\pi \varnu /2) } \lp P_{- \varnu } (x) - P_{\varnu } (x)  \rp ,
\end{align}
and the regularized Bessel kernels
\begin{align}
D_{\varnu } (x) = B_{\varnu } (  x) - R_{\varnu } (x), \hskip 10 pt M_{\varnu } (x) =  (4/ {\pi}) \cos (\pi \varnu /2) K_{\varnu } (  x) - R_{\varnu } (x).
\end{align}
Consider the following two integrals
\begin{align}\label{0eq: regularized integrals}
\int_0^{\infty} \frac {D_{\varnu } (4 \pi x) e (\pm x y)} {x} d x, \hskip 10 pt \int_0^{\infty} \frac {M_{\varnu } (4 \pi x) e (\pm x y)} {x} d x. 
\end{align}
We have $J_{\varnu } (x) - P_{\varnu } (x)$, $I_{\varnu } (x) - P_{\varnu } (x) = O  ( x^2  )$ as $x \ra 0$. Thus these integrals are absolutely convergent at zero and become so after integration by parts in the vicinity of infinity (the region of convergence may actually be extended to $|\Re \varnu  | < 1$ and the convergence is uniform for $\Re \varnu $ in a compact set). 

For the first integral in \eqref{0eq: regularized integrals}, we apply \eqref{0eq: Weber-Sch} and \eqref{0eq: Fourier of x} to the Fourier transform of $\lp J_{-\varnu  +\sigma} (4\pi x) - P _{-\varnu  +\sigma} (4\pi x) \rp / x$ and $\lp J_{\varnu  +\sigma} (4\pi x) - P _{\varnu  +\sigma} (4\pi x) \rp/ x$ and then let $\sigma \ra 0$; the uniformity in $\Re \sigma$ may be easily verified.  In this way, we obtain for $y > 2$ that
\begin{equation}\label{0eq: Weber-Sch regularized}
\begin{split}
\int_0^{\infty} \frac {D_{\varnu } (4 \pi x) e (\pm x y)} {x} d x = - \frac {1} {\varnu  \sin (\pi \varnu /2)} \Bigg\{ & e^{\mp \frac 1 2 \pi i \varnu } \lp \frac {\big(  y + \sqrt {y^2 - 4} \big)^{\varnu  } } { 2^{\varnu }   }   -   {y^{\shskip\varnu }} \rp \\
 + \, & e^{\pm \frac 1 2 \pi i \varnu } \lp \frac { 2^{\varnu }   }  {\big( y + \sqrt {y^2 - 4} \big)^{\varnu  } } - \frac {1} {y^{\shskip\varnu }} \rp \Bigg\}.
\end{split}
\end{equation}
The formula in the case $0 \leqslant y \leqslant 2$ is similar. 

For the second integral in \eqref{0eq: regularized integrals}, as indicated before, there is no   special Weber-Schafheitlin formula  for either $I_{\varnu } (4 \pi x)$ or $K_{\varnu } (4 \pi x) $. 
We propose an alternative approach by modifying the integrals by the factor $x^{\shskip\rho}$ with $ 0 < \Re \rho < 1$. By \cite[6.699 3, 4]{G-R} and the transformation formula for hypergeometric functions with respect to $z \ra 1/z$ (see \cite[\S 2.4.1]{MO-Formulas}), along with the reflection and the duplication formula for the gamma function, we infer that for all $\Re \rho > 0$
\begin{equation}\label{0eq: Weber-Sch for K}
\begin{split}
 \int_0^\infty  K_{\varnu } (4 \pi x)  e  (\pm x y)  & x^{\shskip \rho - 1 }  d x =    \\
 \frac {\pi} {2 (2 \pi)^{  \rho} \sin (\pi \varnu )}  \bigg\{ & \frac {       \Gamma (\rho-\varnu ) e^{\pm \frac 1 2 \pi i (\rho-\varnu )} } {  \Gamma (1-\varnu ) y^{\shskip \rho - \varnu }    }  {_2F_1} \hskip -2 pt \lp \frac {\rho-\varnu } 2 , \frac {1+\rho - \varnu } 2 ; 1-\varnu   ; - \frac 4 {y^2}  \rp  \\
   - & \frac {       \Gamma (\rho+\varnu ) e^{\pm \frac 1 2 \pi i (\rho+\varnu )}  } {  \Gamma (1+\varnu ) y^{\shskip \rho + \varnu }    }  {_2F_1} \hskip -2 pt \lp \frac {\rho + \varnu } 2, \frac {1+\rho+\varnu } 2  ; 1+ \varnu  ; - \frac 4 {y^2}  \rp \bigg\}.
\end{split}
\end{equation}
A formula of this kind will be called {\it general Weber-Schafheitlin integral formula}. We now apply \eqref{0eq: Fourier of x} and \eqref{0eq: Weber-Sch for K} to the Fourier transform of $(4/\pi) \cos (\pi \varnu /2) K_{\varnu } (4 \pi x) x^{\shskip \rho-1} -    \big(  P_{\rho - \varnu } (4 \pi x) - P_{\rho+\varnu } (4 \pi x)  \big) /  {\sin (\pi \varnu /2) }$ and then let $\rho \ra 0$. The resulting hypergeometric functions may be evaluated by the formula (see \cite[\S 2.1]{MO-Formulas})
\begin{align}\label{0eq: hypergeometric, evalation}
\big( 1 + \sqrt {1-z^2} \big)^{- 2 a} = 2^{-2 a} {_2F_1} \hskip -2 pt \lp a , a +   1 / 2 ; 2 a+1  ; z^2  \rp.
\end{align}
Finally we obtain
\begin{equation}\label{0eq: Weber-Sch regularized, 2}
\begin{split}
\int_0^{\infty} \frac {M_{\varnu } (4 \pi x) e (\pm x y)} {x} d x = - \frac {1} {\varnu  \sin (\pi \varnu /2)} \Bigg\{ & e^{\mp \frac 1 2 \pi i \varnu } \lp \frac {\big(y + \sqrt {y^2 + 4}  \big)^{\varnu  } } { 2^{\varnu }   }   -   {y^{\shskip\varnu }} \rp \\
+ \, & e^{\pm \frac 1 2 \pi i \varnu } \lp \frac { 2^{\varnu }   }  {\big(y + \sqrt {y^2 + 4} \big)^{\varnu  } } - \frac {1} {y^{\shskip\varnu }} \rp \Bigg\},
\end{split}
\end{equation}
which is very similar to \eqref{0eq: Weber-Sch regularized}. 
This somewhat indirect approach to \eqref{0eq: Weber-Sch regularized, 2} would also lead us to \eqref{0eq: Weber-Sch regularized}; we only record here the general Weber-Schafheitlin integral formula for $J_{\varnu } (4 \pi x)$ as below in the case $y > 2$ (see \cite[6.699 1, 2]{G-R}),
\begin{equation}\label{0eq: Weber-Sch for J}
\begin{split}
\int_0^\infty  J_{\varnu } (4 \pi x)  e  (\pm x y)  & x^{\shskip \rho - 1 }  d x = \frac {\Gamma (\rho+\varnu ) e^{\pm \frac 1 2 \pi i (\rho+\varnu )}  } {(2\pi)^{ \rho} \Gamma (1+\varnu ) y^{\shskip \rho+\varnu  } } {_2F_1} \hskip -2 pt \lp \frac {\rho + \varnu } 2, \frac {1+\rho+\varnu } 2  ; 1+ \varnu  ;  \frac 4 {y^2}  \rp \hskip -1 pt,
\end{split}
\end{equation}
valid for $0 < \Re \rho < \bfrac 3 2$.

An observation is that the paired functions in \eqref{0eq: Weber-Sch regularized} and \eqref{0eq: Weber-Sch regularized, 2}, say $ \big( y + \sqrt {y^2 - 4} \big)^{\varnu  }  / 2^{\varnu }   $ and $   {y^{\shskip\varnu }}  $, are asymptotically equivalent as $y \ra \infty$. In other words, the Fourier transform of $B_{\varnu } ( 4 \pi x) / x$ or $ 4 \cos (\pi \varnu /2) K_{\varnu } ( 4 \pi x) / \pi x  $ and that of $ R_{\varnu } (4 \pi x) / x $, in the formal sense, have the same asymptotic at infinity. Note that this is also true if they are modified by the factor $x^{\shskip \rho}$. A similar observation, Lemma \ref{lem: F sim G}, is crucial to our analysis over complex numbers. See \S \ref{sec: asymptotic} for more details.

For the analysis for the $B$-transform  in \cite[\S 6.7.2]{Venkatesh-BeyondEndoscopy}, we modify the arguments therein as follows. First, we start with writing
\begin{equation}\label{1eq: Fourier of Bessel, real, 1}
\begin{split}
\int_0^{\infty} \cos (2\pi k x) \phi (x) \frac {d x} x =  \frac 1 2 & \int_0^{\infty} \cos (2\pi k x) \int_{-\infty}^{\infty} D_{2it} (4 \pi x) h(t) \sinh (\pi t) t d t \frac {d x} x \\
 +   i & \int_0^{\infty} \cos (2\pi k x) \int_{-\infty}^{\infty} P_{2it} (4 \pi x) h(t) t d t \frac {d x} x,
\end{split}
\end{equation}
instead of 
\begin{align}\label{1eq: Fourier of Bessel, real, 2}
	\int_0^{\infty} \cos (2\pi k x) \phi (x) \frac {d x} x =    i \int_0^{\infty} \cos (2\pi k x) \int_{-\infty}^{\infty} J_{2it} (4 \pi x) h(t) t d t \frac {d x} x. 
\end{align}
Second, we apply \eqref{0eq: Weber-Sch regularized} directly to the first integral on the right of \eqref{1eq: Fourier of Bessel, real, 1}, 
 and shift the order of $ P_{2 i t } (4 \pi x)$ in \eqref{1eq: Fourier of Bessel, real, 1} instead of that of  $ J_{2it } (4 \pi x)$ in \eqref{1eq: Fourier of Bessel, real, 2}. Note that  the $B$-transform turns into the inverse Mellin transform if 
 $J_{2it} (4 \pi x)$ is substituted by 
 $P_{2it }  (4 \pi x)$. It is clear that the analysis for the $K$-transform may also be done in this way. Since these will be elaborated in the complex setting in \S  \ref{sec: Proof of Venkatesh}, we do not discuss here any further details. 

The ideas outlined above will be executed  in \S \S \ref{sec: regularized W-S} and \ref{sec: Proof of Venkatesh} for our analysis on complex numbers. Yet we still need to first establish the complex analogue of the general Weber-Schafheitlin integral formula in \eqref{0eq: Weber-Sch for K} or \eqref{0eq: Weber-Sch for J}. This is actually the main technical result of this article; its proof will be given mainly in \S \ref{sec: Proof of Theorem 1}.  

\subsection{Main theorems}

We now introduce the definition of Bessel functions over complex numbers (see \cite[\S 15.3]{Qi-Bessel}, \cite[(6.21), (7.21)]{B-Mo}). Let $\mu $ be a complex number  and $d  $ be an integer. 
We define 
\begin{equation}\label{0def: J mu m (z)}
J_{\mu ,\shskip  d} (z) = J_{\mu + d } \lp  z \rp J_{\mu -  d  } \lp  {\widebar z} \rp.
\end{equation}
The function $J_{\mu,\shskip  d} (z)$ is well defined and {even} on $\BC \smallsetminus \{0\}$ in the sense that the   expression on the right of \eqref{0def: J mu m (z)} is independent on the choice of the argument of $z$ modulo $ \pi$. Next, we define
\begin{equation}\label{0eq: defn of Bessel}
\bfJ_{ \mu,\shskip  d} (z) =  \frac {1} {\sin (\pi \mu)} \lp J_{-\mu,\shskip  -d} (4 \pi   z) -  J_{\mu,\shskip  d} (4 \pi   z) \rp .
\end{equation} 
It is understood that in the non-generic case when  $ \mu$ is an integer the right hand side should be replaced by its limit. It is clear that $\bfJ_{ \mu,\shskip  d} (z)$ is also an even function on  $\BC \smallsetminus \{0\}$.  

According to \cite[\S 18.2]{Qi-Bessel}, these Bessel functions are attached to representations of $\PGL_2 (\BC)$. 
We shall not restrict ourselves to the Bessel functions of trivial $\SU_2$-type ($d = 0$) arising in the Kuznetsov-Bruggeman-Miatello  formula (see  \cite[\S 2.6, Appendix]{Venkatesh-BeyondEndoscopy} or \cite{BM-Kuz-Spherical})\footnote{For succinctness, we shall suppress $d$ from our notation if $d = 0$, so in particular $ \bfJ_{ \mu} (z) = \bfJ_{ \mu, \shskip  0} (z) $.}. For Bessel functions for  $\GL_2 (\BC)$ with non-trivial central characters, our results and method would still be valid but the formulae would be more involved. For these we refer the reader to Appendix \ref{appendix}.

\vskip 5 pt

\subsubsection*{General Weber-Schafheitlin formula}

First, we have the general Weber-Schafheitlin integral formula for $\bfJ_{ \mu,\shskip  d} (z)$ as follows. 

\begin{thm}\label{thm: general W-S formula} Suppose that
	$ |\Re \mu| < \Re \rho < \bfrac 1 2 $. Define
	\begin{equation}\label{0eq: defn of C}
	C_{ \mu,\shskip  d}^{\shskip \rho} = - \frac {  \sin (\pi ( \rho  +  \mu))  } {(2 \pi  )^{ 2 \rho} \sin (\pi \mu)}  \frac {      \Gamma (\rho +   \mu  + d) \Gamma ( \rho  +   \mu - d)   } {   \Gamma  (1 +  \mu + d ) \Gamma  (1 +  \mu - d  )},
	\end{equation}
	and
	\begin{align}\label{1eq: defn of F(1,2)}
	& F_{\rho, \shskip \varnu }^{(1)} (z) = {_2 F_1} \hskip - 2 pt \lp   \frac {\rho \hskip - 0.5 pt + \hskip - 0.5 pt \varnu } {2}; \frac {1 \hskip - 0.5 pt + \hskip - 0.5 pt \rho + \hskip - 0.5 pt \varnu  } {2}; 1 \hskip - 0.5 pt + \hskip - 0.5 pt\varnu  \shskip; z   \rp \hskip - 2 pt ,   
	F_{\rho, \shskip \varnu }^{(2)} (z) = {_2 F_1} \hskip - 2 pt \lp   \frac {\rho \hskip - 0.5 pt - \hskip - 0.5 pt \varnu } {2}; \frac {1 \hskip - 0.5 pt + \hskip - 0.5 pt \rho \hskip - 0.5 pt - \hskip - 0.5 pt \varnu  } {2}; 1 \hskip - 0.5 pt - \hskip - 0.5 pt \varnu  \shskip; z   \rp \hskip - 2 pt.
	\end{align}
	We have the identity
	\begin{equation}\label{0eq: general W-S, C}
	\begin{split}
	& \int_{0}^{2 \pi} \int_0^\infty \bfJ_{ \mu,\shskip  d}    \big(  x e^{i\phi} \big)  e (- 2 x y \cos (\phi + \theta) )  x^{2 \rho - 1}  d x \shskip d \phi
	=  \\
	&   \frac { C_{\mu, \shskip d}^{\shskip \rho}    F_{\rho, \shskip \mu+d}^{(1)} \big(4 / y^2 e^{2i \theta}\big) \hskip -1  pt F_{\rho, \shskip \mu-d}^{(1)} \big(4 e^{ 2i \theta} / y^2 \big) \hskip -1.5 pt } {y^{2\rho + 2 \mu} e^{2 i d \theta}} \hskip -1.5 pt + \hskip -1.5 pt \frac { C_{- \mu, \shskip - d}^{\shskip \rho} F_{\rho, \shskip \mu+d}^{(2)} \big(4 / y^2 e^{2i \theta}\big) \hskip -1  pt F_{\rho, \shskip \mu-d}^{(2)} \big(4 e^{2i \theta} / y^2 \big) \hskip -1.5 pt} {{ y^{2\rho -   2 \mu } e^{- 2i d \theta}}}
	\end{split}
	\end{equation}
	for $y \in [0, \infty)$ and $\theta \in [0, 2 \pi)${\rm;}  for $d = 0$,   the right hand side of the identity  is to be replaced by its limit if $\mu = 0$. Moreover, the identity {\rm\eqref{0eq: general W-S, C}} is valid under the weaker condition	$ |\Re \mu| < \Re \rho < 1 $ if we further assume that $ y > 2$. 
\end{thm}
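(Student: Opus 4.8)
The plan is to reduce the two--dimensional integral over $\BC$, written in polar coordinates $z = x e^{i \phi}$, to the one--dimensional real theory recorded in \eqref{0eq: Weber-Sch for J}. With $w = y e^{i \theta}$ the planar wave is $e(-2 x y \cos (\phi + \theta)) = e\big(-(z w + \overline z\, \overline w)\big)$, so after inserting \eqref{0def: J mu m (z)} the integrand splits into a factor depending on $z$ through $J_{\mu + d}(4 \pi z)$ and one depending on $\overline z$ through $J_{\mu - d}(4 \pi \overline z)$. I would carry out the angular integral first: expanding the two Bessel factors in power series and the planar wave by the Jacobi--Anger formula, the exponent of $e^{i \phi}$ contributed by the Bessel product is $2 (d + m - n)$ --- an \emph{integer}, because the two copies of $\mu$ cancel --- so orthogonality on $[0, 2 \pi]$ selects precisely the resonant Jacobi--Anger mode and leaves, for each pair $(m, n)$, a radial integral $\int_0^\infty x^{2 \mu + 2 \rho + 2 (m + n) - 1} J_{-2(d + m - n)}(4 \pi x y)\, d x$, namely the Mellin transform of a single Bessel function.

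Evaluating these radial integrals yields the quotient of gamma factors $\Gamma\big(\tfrac{k + s}{2}\big) / \Gamma\big(1 + \tfrac{k - s}{2}\big)$ with $s = 2 \mu + 2 \rho + 2 (m + n)$ and $k = -2 (d + m - n)$; the decisive simplification is that these two gamma factors then \emph{decouple}, the numerator $\Gamma(\mu + \rho - d + 2 n)$ depending only on $n$ and the denominator $\Gamma(1 - \mu - \rho - d - 2 m)$ only on $m$. Applying the duplication formula to each, and the reflection formula to the second --- which is the origin of the factor $\sin (\pi (\rho + \mu))$ in \eqref{0eq: defn of C} --- the double series factorizes into a product of two Gauss series, which I would identify with $F_{\rho,\, \mu + d}^{(1)}\big(4 / y^2 e^{2 i \theta}\big)\, F_{\rho,\, \mu - d}^{(1)}\big(4 e^{2 i \theta} / y^2\big)$ from \eqref{1eq: defn of F(1,2)}, the reciprocal-phase arguments $4 e^{\pm 2 i \theta} / y^2$ encoding the angular data. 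Collecting the powers of $2 \pi$ and $y$ together with the surviving gamma quotient assembles the constant $C_{\mu,\, d}^{\rho}$, while the summand $J_{-\mu,\, -d}(4 \pi z)$ of \eqref{0eq: defn of Bessel} produces, under $\mu \mapsto -\mu$, $d \mapsto -d$, the second term with $C_{-\mu,\, -d}^{\rho}$ and $F^{(2)}$; the global factor $1 / \sin (\pi \mu)$ is inherited from \eqref{0eq: defn of Bessel}.

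The main obstacle is convergence. The radial Mellin transform above is valid only for $\Re s < \tfrac 3 2$, whereas $\Re s = 2 \Re \mu + 2 \Re \rho + 2 (m + n)$ exceeds $\tfrac 3 2$ once $m + n \geqslant 1$, so the termwise computation is purely formal. The divergence is genuine: for $\phi \in (0, \pi)$ each individual product $J_{\pm \mu,\, \pm d}(4 \pi x e^{i \phi})$ grows like $x^{-1} e^{8 \pi x |\sin \phi|}$ as $x \to \infty$. What saves the integral is that, \emph{because $d$ is an integer}, the exponentially growing leading terms of $J_{-\mu,\, -d}(4 \pi z)$ and $J_{\mu,\, d}(4 \pi z)$ carry opposite phases $e^{\mp i \pi d}$ and cancel in the difference defining $\bfJ_{\mu,\, d}$, leaving an oscillatory kernel of moderate size $O(|z|^{-1})$. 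Consequently one must keep the regularized difference \eqref{0eq: defn of Bessel} intact and may not split it before integrating; with $\bfJ_{\mu,\, d}$ in place the integral converges absolutely for $|\Re \mu| < \Re \rho < \tfrac 1 2$ and, after one integration by parts exploiting the oscillation, conditionally up to $\Re \rho < 1$ when $y > 2$, which are exactly the two ranges asserted.

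To make this rigorous I would establish \eqref{0eq: general W-S, C} as an identity of analytic functions rather than integrate termwise. One clean route is to split the radial integral at a cutoff $A$: on $(0, A]$ the Bessel series converges uniformly and may be integrated term by term, while on $[A, \infty)$ one substitutes the oscillatory asymptotic expansion of $\bfJ_{\mu,\, d}$ and integrates against the character, the dependence on $A$ cancelling between the two pieces. Equivalently, following the asymptotic-and-differential-equation method of the paper, both sides of \eqref{0eq: general W-S, C} satisfy the same linear differential equation in $y$ --- obeyed on the right by the product of hypergeometric functions, and inherited on the left, via integration by parts against the planar wave, from the Bessel equations satisfied by the factors of $\bfJ_{\mu,\, d}$ --- and their independent asymptotics as $y \to \infty$ are pinned down by applying the complex analogue of \eqref{0eq: Fourier of x} to the leading power-type terms of $\bfJ_{\mu,\, d}$. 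Matching these determines the solution uniquely and yields the identity on $|\Re \mu| < \Re \rho < \tfrac 1 2$, the extension to $\Re \rho < 1$ for $y > 2$ then following by analytic continuation of both sides in $\rho$.
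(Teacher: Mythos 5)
Your formal double-series computation (power series for both Bessel factors, Bessel's integral \eqref{2eq: integral repn of J} plus orthogonality in $\phi$, then the Weber--Sonine--Schafheitlin formula \eqref{1eq: WSS formula} for the radial integrals) is a nice independent derivation of the \emph{shape} of \eqref{0eq: general W-S, C}: the decoupling of the gamma factors $\Gamma(\mu+\rho-d+2n)$ and $\Gamma(1-\mu-\rho-d-2m)$ is indeed the identity that makes the answer factor into two hypergeometric series, and nothing like it appears in the paper. Your diagnosis of the convergence issues is also correct, and your second rigorous route (asymptotics at infinity plus differential equations) is the strategy the paper actually follows. The problem is that this route, as you state it, has a genuine gap at its decisive step.

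You claim that the differential equations together with the asymptotics at infinity ``determine the solution uniquely''. They do not. First, one needs not a single ODE in $y$ but the pair of hypergeometric equations $\nabla_{\rho,\mu+d} f=0$ and $\overline\nabla_{\rho,\mu-d} f=0$ in the variables $u$, $\overline u$ (with $4u=y^2e^{2i\theta}$), as in \eqref{2eq: nabla, 1}--\eqref{2eq: nabla, 2}. Second, and more seriously, the solution space of this pair is \emph{four}-dimensional, spanned by $G^{(k)}_{\rho,\mu+d}(u)\,G^{(l)}_{\rho,\mu-d}(\overline u)$ with $k,l\in\{1,2\}$, while the asymptotic \eqref{3eq: asymptotic F (y)} prescribes only the two diagonal products. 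The cross term $G^{(1)}_{\rho,\mu+d}(u)\,G^{(2)}_{\rho,\mu-d}(\overline u)$ has modulus $\asymp|u|^{-\Re\rho-d}$ at infinity, which for $d\geqslant1$ is strictly smaller than both main terms $|u|^{-\Re\rho\mp\Re\mu}$ (since $|\Re\mu|<\bfrac 1 2\leqslant d$); hence no asymptotic expansion at infinity can exclude it (symmetrically for $d\leqslant-1$), and when $d=0$, $\Re\mu=0$---precisely the case needed for Theorem \ref{thm: Venkatesh}---all four solutions have the same modulus. This is exactly what Lemma \ref{lem: hypergeometric equation} is designed to handle: the cross terms are eliminated not by size but by single-valuedness, since continuation along $\arg u\mapsto\arg u+2\pi$ multiplies them by $e^{\mp2\pi i\mu}\neq1$, so they cannot occur in a function on $\BC$; this is also where the hypothesis that $\mu$ is not an integer (i.e.\ $\mu\neq0$, removed only at the very end by analytic continuation) enters.

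Two further points are skipped. The ODE-plus-asymptotics argument a priori yields the identity only off the branch cut $[0,1]$ of $G^{(1)}$, $G^{(2)}$; the paper crosses the cut using the \emph{continuity} of $\bfF_{\mu, d}^{\rho}$ (Lemma \ref{lem: convergence of F and G}), and this is the only access to the range $0\leqslant y\leqslant2$, which your series (convergent only for $y>2$) and your final continuation in $\rho$ (performed at fixed $y>2$) never reach, although it is part of the theorem. And the statement that the asymptotics are ``pinned down by the leading power-type terms'' is Lemma \ref{lem: F sim G}, which requires real work: a partition of unity at the scales $y^{-1/2}$ and $1$, and repeated use of the oscillatory-integral Lemma \ref{lem: integration by parts} (with $b=\pm4$ against $a=-2y$ on the two Hankel pieces of \eqref{1eq: J = W + W}) to prove $\bfF_{\mu, d}^{\rho}=\bfP_{\mu, d}^{\rho}+o\big(y^{-2\Re\rho-2|\Re\mu|}\big)$. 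Your alternative cutoff-at-$A$ scheme is too vague to assess: making the $A$-dependence cancel against the divergent termwise integrals is essentially the content of the theorem itself.
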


\begin{rem}
Since $F_{\rho, \shskip \varnu }^{(1)} ( 1/z)$ and $F_{\rho, \shskip \varnu }^{(2)} (1/z)$ are defined via analytic continuation within the unit circle $|z| < 1$, the formula {\rm\eqref{0eq: general W-S, C}} is not quite illuminating for $y < 2${\rm;} it is not so clear  {\it a priori} that the right hand side of  {\rm\eqref{0eq: general W-S, C}} is a well-defined function on the complex plane.  An alternative expression of {\rm\eqref{0eq: general W-S, C}} obtained from Proposition {\rm\ref{prop: hypergeometric |z| < 1}}  would be more transparent in terms of  the Gauss hypergeometric series for $ y < 2$. 
\end{rem}

By the Gaussian formula for ${_2F_1} (a, b; c; 1)$ (see \cite[\S 2.1]{MO-Formulas})
\begin{align*}
{_2F_1} (a, b; c; 1) = \frac {\Gamma (c) \Gamma (c-a-b)} {\Gamma (c-a) \Gamma (c-b)}, \hskip 15 pt \Re (a+b-c ) < 0, \, c \neq 0, -1, -2,...,
\end{align*}
together with the duplication  and the reflection formula for the gamma function, it is straightforward to derive the following corollary.

\begin{cor}\label{cor: d integer}
	For $ |\Re \mu| < \Re \rho < \bfrac 1 2 $, we have
	\begin{equation*}
	\begin{split}
	\int_{0}^{2 \pi} \hskip -1 pt \int_0^\infty & \bfJ_{ \mu,\shskip  d}    \big(  x e^{i\phi} \big)  e (- 4 x   \cos \phi )  x^{2 \rho - 1}  d x \shskip d \phi
	= \big(2 / \pi^3 \big) (8 \pi)^{-2\rho} \cos (\pi \rho) \Gamma (1/2 - \rho)^2 \, \cdot \\
	& \hskip -5 pt  \sin (\pi (\rho + \mu)) \sin (\pi (\rho - \mu)) \Gamma (\rho+\mu+d)  \Gamma (\rho+\mu-d) \Gamma (\rho-\mu+d)  \Gamma (\rho-\mu-d) .
	\end{split}
	\end{equation*}
\end{cor}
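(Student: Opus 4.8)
The plan is to specialize the general Weber--Schafheitlin formula of Theorem~\ref{thm: general W-S formula} to the single point at which both hypergeometric arguments collapse to $1$, and then evaluate the resulting Gauss series in closed form. Concretely, taking $y = 2$ and $\theta = 0$ in \eqref{0eq: general W-S, C} turns the kernel $e(-2xy\cos(\phi+\theta))$ into $e(-4x\cos\phi)$ and sends each of the four arguments $4/y^2e^{\pm 2i\theta}$ and $4e^{\pm 2i\theta}/y^2$ to $z = 1$. Since $|\Re\mu| < \Re\rho < \bfrac 1 2$, the first thing I would check is that the Gaussian summation is legitimate: for $F^{(1)}_{\rho,\varnu}$ one computes $c - a - b = \tfrac12 - \rho$, so the requirement $\Re(c-a-b) > 0$ is \emph{exactly} the hypothesis $\Re\rho < \bfrac 1 2$, and the same holds for $F^{(2)}_{\rho,\varnu}$. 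Thus all four factors may be evaluated by the Gaussian formula.

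Next I would simplify each evaluated factor. Applying the Gaussian formula to $F^{(1)}_{\rho,\varnu}(1)$ yields $\Gamma(1+\varnu)\Gamma(\tfrac12-\rho)\big/\big(\Gamma(\tfrac{2-\rho+\varnu}{2})\Gamma(\tfrac{1-\rho+\varnu}{2})\big)$, and the Legendre duplication formula collapses the two half-integer-shifted gammas in the denominator into $\sqrt\pi\,2^{\rho-\varnu}\Gamma(1-\rho+\varnu)$; the analogue for $F^{(2)}_{\rho,\varnu}(1) = F^{(1)}_{\rho,-\varnu}(1)$ follows by $\varnu \mapsto -\varnu$. Substituting $\varnu = \mu\pm d$ and inserting the constants $C^\rho_{\mu,d}$, $C^\rho_{-\mu,-d}$ from \eqref{0eq: defn of C}, the factors $\Gamma(1\pm\mu\pm d)$ produced by the hypergeometrics cancel against those in the denominators of the $C$'s, and the two terms on the right of \eqref{0eq: general W-S, C} reduce to
\[
\frac{\Gamma(\tfrac12-\rho)^2}{2^{6\rho}\pi^{2\rho+1}\sin(\pi\mu)}\Bigg[-\sin(\pi(\rho+\mu))\frac{\Gamma(\rho+\mu+d)\Gamma(\rho+\mu-d)}{\Gamma(1-\rho+\mu+d)\Gamma(1-\rho+\mu-d)} + \sin(\pi(\rho-\mu))\frac{\Gamma(\rho-\mu+d)\Gamma(\rho-\mu-d)}{\Gamma(1-\rho-\mu+d)\Gamma(1-\rho-\mu-d)}\Bigg].
\]

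The decisive step is to symmetrize this bracket via the reflection formula together with the integrality of $d$. For the first term I would write $\Gamma(1-\rho+\mu\pm d) = \pi\big/\big(\sin(\pi(\rho-\mu\mp d))\Gamma(\rho-\mu\mp d)\big)$, and for the second $\Gamma(1-\rho-\mu\pm d) = \pi\big/\big(\sin(\pi(\rho+\mu\mp d))\Gamma(\rho+\mu\mp d)\big)$. Because $d\in\BZ$ one has $\sin(\pi(\rho\mp\mu\pm d)) = (-1)^d\sin(\pi(\rho\mp\mu))$, so the reflected pair in the first term contributes $\sin^2(\pi(\rho-\mu))$ and that in the second contributes $\sin^2(\pi(\rho+\mu))$. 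After this both terms acquire the common symmetric product $\Gamma(\rho+\mu+d)\Gamma(\rho+\mu-d)\Gamma(\rho-\mu+d)\Gamma(\rho-\mu-d)$, and the bracket becomes $\pi^{-2}\sin(\pi(\rho+\mu))\sin(\pi(\rho-\mu))\big[\sin(\pi(\rho+\mu)) - \sin(\pi(\rho-\mu))\big]$ times that product. Finally the sum-to-product identity $\sin(\pi(\rho+\mu)) - \sin(\pi(\rho-\mu)) = 2\cos(\pi\rho)\sin(\pi\mu)$ produces a factor $\sin(\pi\mu)$ cancelling the one in the denominator, and collecting $2^{6\rho} = 8^{2\rho}$ with the surviving powers of $\pi$ gives the stated right-hand side.

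I expect the only genuine subtlety to be this symmetrization: the asymmetric gamma quotients from the two terms look unrelated, and it is precisely the hypothesis $d\in\BZ$---through the collapse $\sin(\pi(\rho\pm\mu\mp d)) = (-1)^d\sin(\pi(\rho\pm\mu))$---that forces them onto the common factor and lets the $\sin(\pi\mu)$ cancel. As a byproduct this exhibits the right-hand side as entire in $\mu$, consistent with the $\mu = 0$ limit prescription of Theorem~\ref{thm: general W-S formula} when $d = 0$; everything else is routine bookkeeping with the duplication and reflection formulae.
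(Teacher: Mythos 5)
Your proposal is correct and takes essentially the same route as the paper: specialize Theorem \ref{thm: general W-S formula} at $y=2$, $\theta=0$ (where all four hypergeometric arguments become $1$), evaluate each factor by the Gaussian formula under the condition $\Re \rho < 1/2$, and then simplify with the duplication formula and the reflection formula, the latter using $d \in \BZ$ to collapse $\sin(\pi(\rho\pm\mu\mp d))$ to $(-1)^d \sin(\pi(\rho\pm\mu))$. The paper states this derivation as ``straightforward'' without details, and your worked-out computation (including the cancellation of $\sin(\pi\mu)$ via $\sin(\pi(\rho+\mu))-\sin(\pi(\rho-\mu)) = 2\cos(\pi\rho)\sin(\pi\mu)$ and the bookkeeping $2^{6\rho}\pi^{2\rho} = (8\pi)^{2\rho}$) is accurate.
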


We shall prove Theorem \ref{thm: general W-S formula} by exploiting a soft   method that combines  asymptotic analysis of oscillatory integrals and a uniqueness result for differential equations. Precisely, it will be shown that the two sides of \eqref{0eq: general W-S, C} satisfy the same asymptotic as $y \ra \infty$ and also the same (hypergeometric) differential equations and hence are forced to be equal. 

This method of proof grew out of the author's previous work \cite{Qi-II-G} on a similar-looking integral which, in the notation of this paper, may be written as follows
\begin{align}\label{1eq: integral Fourier sqrt}
  \int_{0}^{2 \pi} \int_0^\infty \bfJ_{ \mu,\shskip  d}    \big(     x^{\frac 1 2} e^{\frac 1 2 i\phi} \big)  e (- 2 x y \cos (\phi + \theta) )    d x \shskip d \phi.
\end{align}
This Fourier-transform integral is however entirely different in nature. Roughly speaking, it has an explicit formula in terms of the Bessel function of halved order  $\bfJ_{ \frac 1 2 \mu,\shskip  \frac 1 2d} \big(1/4 y e^{i \theta} \big)$ (see \cite{Qi-II-G} for the formula (in slightly different notation)). That formula was used to extend the Waldspurger formula of Baruch and Mao from totally real to arbitrary number fields; see \cite{BaruchMao-NA,BaruchMao-Real,BaruchMao-Global} and \cite{Chai-Qi-Bessel,Chai-Qi-Wald}. The $  x^{\frac 1 2} e^{\frac 1 2 i\phi}$ above in \eqref{1eq: integral Fourier sqrt} is {\it squared} into the $x e^{i \phi}$  in \eqref{0eq: general W-S, C} (and also in \eqref{0eq: special W-S, C}), since it is the {\it symmetric square lift}  $\mathrm{Sym}^2$ under consideration.

Any direct method seems impossible as there is no formula available in the literature to  deal with the radial integration---the most outstanding obstacle in the extension of integral formulae from real to complex numbers. It is extremely lucky that the formula for \eqref{1eq: integral Fourier sqrt} can be proven by known formulae in the spherical case ($d = 0$); see \cite{Qi-Sph}. For the integral under consideration, however, even the spherical case is inaccessible by direct method.

\vskip 5 pt

\subsubsection*{Special Weber-Schafheitlin formula after regularization}

Define 
\begin{equation}\label{0eq: defn P mu m (z)}
P_{ \mu,\shskip  d} (z) = \frac {(z/2)^{\mu + d} (\widebar z/2)^{\mu - d}} {\Gamma  ( \mu + d + 1 ) \Gamma  ( \mu - d + 1 )},
\end{equation}
and
\begin{equation}\label{0eq: defn of R}
\boldsymbol {R}_{ \mu,\shskip  d} (z) =  
\frac {1} {\sin (\pi \mu)} \lp P_{-\mu,\shskip  -d} (4 \pi   z) -  P_{ \mu,\shskip  d} (4 \pi   z) \rp.
\end{equation} 
Again,  the right hand side is replaced by its limit when  $ \mu$ is an integer.
We now state the regularized special Weber-Schafheitlin integral formula for $\bfJ_{ \mu,\shskip  d} (z)$ as follows.

\begin{thm}\label{thm: regularized W-S, C}
	 Suppose that $|\Re \mu | < \bfrac 1 2 $.
	 Define the regularized Bessel function 
	 \begin{align}\label{0eq: defn M}
	 \boldsymbol{M}_{ \mu,\shskip  d} (z) = \boldsymbol{J}_{ \mu,\shskip  d} (z) - \boldsymbol{R}_{ \mu,\shskip  d} (z).
	 \end{align}
	 Let
	 \begin{align}
	  Y (z) =    \frac { \left| z + \sqrt {z^2 - 4}  \right| } { 2  }  , \hskip 10 pt E (z) = \frac {  z + \sqrt {z^2 - 4}   } { \left|z + \sqrt {z^2 - 4} \right|  } . 
	 \end{align}
	 We have
	 \begin{equation}\label{0eq: special W-S, C}
	 \begin{split}
	  \int_{0}^{2 \pi} \hskip - 1 pt   \int_0^\infty  & \boldsymbol{M}_{ \mu,\shskip  d }  \big(  x e^{i\phi} \big)  e (- 2 x y \cos (\phi + \theta) )  \frac {  d x \shskip d \phi} {x}
	 =  \frac {1} {d^2 - \mu^2} \cdot \\
	 &   \left\{ \hskip -1 pt  \lp  Y \big(y e^{i \theta} \big)^{ 2 \mu } E \big(y e^{ i \theta} \big)^{2 d} - y^{2 \mu } e^{ 2 i d \theta}  \rp + \lp \frac 1 { Y \big(y e^{i \theta} \big)^{ 2 \mu } E \big(y e^{ i \theta} \big)^{2 d}} - \frac 1  {y^{2 \mu  } e^{ 2 i d \theta}}  \rp 
	 \hskip -1 pt  \right\} \hskip -2 pt 
	 \end{split}
	 \end{equation}
	 for $y \in (0, \infty)$ and $\theta \in [0, 2 \pi)${\rm;} for $d = 0$,   the right hand side of the identity  is to be replaced by its limit if $\mu = 0$.
\end{thm}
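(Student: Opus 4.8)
The plan is to deduce Theorem~\ref{thm: regularized W-S, C} from the general Weber--Schafheitlin formula of Theorem~\ref{thm: general W-S formula} by the regularization-and-limit device already rehearsed over $\BR$ in \S\ref{sec: analysis over R}: one inserts the convergence factor $x^{2\rho}$, splits $\boldsymbol{M}_{\mu,d} = \boldsymbol{J}_{\mu,d} - \boldsymbol{R}_{\mu,d}$ according to \eqref{0eq: defn M}, evaluates each piece for $|\Re\mu| < \Re\rho < 1$, and lets $\rho \to 0$. Writing $u = y e^{i\theta}$, the oscillatory factor is $e\big(-2\Re(x e^{i\phi} u)\big)$, so the left-hand side of \eqref{0eq: special W-S, C} is a genuine Fourier transform on $\BC$. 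I would first establish the identity on the generic line $\Re\mu = 0$, $\mu \neq 0$, where for $y > 2$ both $\int \boldsymbol{J}_{\mu,d}\, x^{2\rho-1}$ and $\int \boldsymbol{R}_{\mu,d}\, x^{2\rho-1}$ converge for $0 < \Re\rho < 1$ and extend continuously to $\rho = 0$; the general case then follows by analytic continuation in $\mu$.

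For the $\boldsymbol{J}_{\mu,d}$ piece Theorem~\ref{thm: general W-S formula} furnishes \eqref{0eq: general W-S, C} verbatim, and the decisive point is that the evaluation \eqref{0eq: hypergeometric, evalation} applies to the functions $F^{(1)}_{\rho,\varnu}, F^{(2)}_{\rho,\varnu}$ of \eqref{1eq: defn of F(1,2)} \emph{exactly} at $\rho = 0$: the parameters $\big(\tfrac{\rho+\varnu}{2}, \tfrac{1+\rho+\varnu}{2}; 1+\varnu\big)$ coincide with $\big(a, a+\tfrac12; 2a+1\big)$, $a = \varnu/2$, precisely when $\rho = 0$. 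Using $\sqrt{1 - 4/u^2} = \sqrt{u^2 - 4}/u$ and $u + \sqrt{u^2 - 4} = 2 Y(u) E(u)$ (and $\widebar u + \sqrt{\widebar u^{\,2} - 4} = 2 Y(u) E(u)^{-1}$ for the conjugate argument $4/\widebar u^{\,2}$), one finds $F^{(1)}_{0,\varnu}(4/u^2) = \big(2u/(u + \sqrt{u^2 - 4})\big)^{\varnu}$, and the product of the two hypergeometric factors in the first term of \eqref{0eq: general W-S, C} collapses the powers of $u, \widebar u$ against the denominator $y^{2\rho + 2\mu} e^{2id\theta}$, leaving $Y(u)^{-2\mu} E(u)^{-2d}$. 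Since $C^{0}_{\mu,d} = 1/(d^2 - \mu^2)$ directly from \eqref{0eq: defn of C}, this term tends to $Y(u)^{-2\mu} E(u)^{-2d}/(d^2 - \mu^2)$; the second term, treated by $\varnu \mapsto -\varnu$, tends to $Y(u)^{2\mu} E(u)^{2d}/(d^2 - \mu^2)$.

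For the $\boldsymbol{R}_{\mu,d}$ piece I would Fourier-transform the monomials $P_{\pm\mu,\pm d}$ directly, this being the complex analogue of \eqref{0eq: Fourier of x}. From \eqref{0eq: defn P mu m (z)}, $P_{\mu,d}(4\pi x e^{i\phi}) = (2\pi)^{2\mu} x^{2\mu} e^{2id\phi}/\big(\Gamma(\mu + d + 1)\Gamma(\mu - d + 1)\big)$; the angular integral yields a Bessel factor $J_{2d}(4\pi xy)$ together with the phase $e^{-2id\theta}$ through the Jacobi--Anger expansion, and the radial Mellin--Bessel integral $\int_0^\infty x^{2\mu + 2\rho - 1} J_{2d}(4\pi xy)\, dx$ is a quotient of gamma functions. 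Collecting the powers of $2\pi$ and applying standard gamma-function identities (in particular the reflection formula, which produces the $\sin(\pi\mu)$), the $P_{\mu,d}$ term contributes $\sin(\pi\mu)\, y^{-2\mu} e^{-2id\theta}/(\mu^2 - d^2)$; dividing by $\sin(\pi\mu)$ as in \eqref{0eq: defn of R} and adding the $P_{-\mu,-d}$ term gives $\int \boldsymbol{R}_{\mu,d}\, x^{2\rho-1} \to \big(y^{2\mu} e^{2id\theta} + y^{-2\mu} e^{-2id\theta}\big)/(d^2 - \mu^2)$ as $\rho \to 0$. Subtracting this from the $\boldsymbol{J}_{\mu,d}$ contribution reproduces exactly the four-term bracket of \eqref{0eq: special W-S, C}.

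Two points require care and are where I expect the real work to lie. First, the passage to $\rho = 0$ must be justified: the limiting integral $\int \boldsymbol{M}_{\mu,d}\, x^{-1}$ converges at the origin precisely because the leading monomials $x^{\pm 2\mu}$ of $\boldsymbol{J}_{\mu,d}$ are cancelled by those of $\boldsymbol{R}_{\mu,d}$ in \eqref{0eq: defn M}, and one must check---using the uniform convergence in $\Re\rho$ recorded in \S\ref{sec: analysis over R}---that $\lim_{\rho\to0}\int \boldsymbol{M}_{\mu,d}\, x^{2\rho-1}$ equals this integral, hence equals the difference of the two limits computed above. Second, and more delicate, is the branch bookkeeping of $\sqrt{z^2 - 4}$ hidden in $Y$ and $E$: the evaluation above is transparent for $y > 2$, where $|4/u^2| < 1$ and the Gauss series converge, but for $0 < y \leqslant 2$ one must route through the continuation of $F^{(1)}_{\rho,\varnu}(1/z), F^{(2)}_{\rho,\varnu}(1/z)$ inside $|z| < 1$ and invoke Proposition~\ref{prop: hypergeometric |z| < 1}, and then match the resulting branches against the definitions of $Y(z)$ and $E(z)$; this is the least automatic step. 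Finally, with \eqref{0eq: special W-S, C} in hand on $\Re\mu = 0$, I would extend it to the strip $|\Re\mu| < \tfrac12$ by analytic continuation, both sides being holomorphic in $\mu$ there, the apparent pole of $1/(d^2 - \mu^2)$ at $\mu = 0$ (for $d = 0$) being cancelled by the vanishing of the bracket and yielding the stated limit.
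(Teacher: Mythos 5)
Your proposal is correct and is essentially the paper's own proof: the paper likewise writes the left-hand side as $\bfF_{\mu, d}^{\rho} - \bfP_{\mu, d}^{\rho}$ with the regularizing factor $x^{2\rho}$, evaluates the $\bfJ$-piece by Theorem \ref{thm: general W-S formula} and the $\bfR$-piece by Lemma \ref{lem: formula for G} (whose proof is exactly your Jacobi--Anger plus Weber--Schafheitlin--Sonine computation in Lemma \ref{lem: integral of P}), and passes to $\rho = 0$ via the evaluation \eqref{0eq: hypergeometric, evalation} and $C^{0}_{\mu,d} = 1/(d^2-\mu^2)$, with the limit justified by the uniform convergence of the $\boldsymbol{M}_{\mu,d}$-integral for $|\Re \mu| - 1 < \Re\rho < \bfrac 1 2 - |\Re\mu|$. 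The only organizational difference is that where you restrict to $\Re\mu = 0$, take $\rho \to 0^{+}$, and continue analytically in $\mu$ at the end, the paper instead continues the identity \eqref{4eq: F-P} analytically in $\rho$ into the extended range containing $\rho = 0$ (using that $F^{(1)}_{\rho,\mu} - 1$ vanishes identically at $\rho = -\mu$ to cancel the pole of $C^{\rho}_{\mu,0}$ when $d = 0$), thereby treating all $|\Re\mu| < \bfrac 1 2$ at once.
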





\subsection{Application to Beyond Endoscopy over arbitrary number fields}

In the  work of Venkatesh \cite{Venkatesh-BeyondEndoscopy},  Langlands' proposal of Beyond Endoscopy \cite{Langlands-BE} is executed for the symmetric square lift $\mathrm{Sym}^2$ on $\GL_2$, giving the classification of dihedral forms---forms whose symmetric square has a pole. It is proven that dihedral forms  correspond to Gr\"ossen-characters of
quadratic field extensions. This result is originally due  to Labesse and Langlands \cite{LL-Endoscopy} by endoscopic methods.

The fundamental tool used by Venkatesh at the beginning is the Kuznetsov relative trace formula of Bruggeman and
Miatello. Poisson summation is then applied at the stage after Kuznetsov-Bruggeman-Miatello. Afterwards, the local exponential sums are evaluated and units of the quadratic field extension enter into the analysis. The Archimedean theory of Venkatesh is contained in his Proposition 7, in which  arises naturally the Fourier transform of Bessel functions due to Kuznetsov-Bruggeman-Miatello and Poisson. 

Venkatesh works over a number field. 
As such, the method is extremely notationally complicated in his \S 4. He however takes great care in guiding the reader by illustrating the main ideas over $\BQ$ in his \S 3. More details may be found in his thesis \cite{Venkatesh-Thesis}.

The main results of Venkatesh (in his \S 4) are stated in the setting of a totally real number field. The only serious obstacle in
the general case (involving complex places) is the validity of a certain integral transform as in his Proposition 7. Except for this, his method clearly does not rely on the totally real assumption  in any important way.

In the present paper, Proposition 7 of Venkatesh is extended to complex places so that his main results are generalized to arbitrary number fields.


For other works on Beyond Endoscopy, see for example \cite{Sarnak-BE,Herman-11,Herman-12,Herman-Asai-16,FLN,YS-1-13,YS-2-17,White-14,Altug-1-15,Altug-2,Altug-3}. 

\vskip 5 pt

\subsubsection*{Notation}

Let $F$ be an (arbitrary) number field. Let $\varv$  stand for a place of $F$ and $F_{\varv}$ denote
the completion of $F$ at $\varv$.  
Let $S_{\infty}$ be the set of Archimedean places. Write $\varv | \infty$ as the abbreviation for $\varv \in S_{\infty}$.   
Define $F_{\infty}$   to be $F \otimes \BR = \prod_{ \varv | \infty } F_{\varv}$. For $x \in F_{\infty}$ let $\mathrm{Norm} (x)$ denote the norm of $x$.  We fix the Haar measure on $F_{\infty}$  corresponding
to the product of $d x$ at real places and $|d x \hskip -1 pt \wedge \hskip -1 pt d \widebar x|$ at complex places. Let $\psi_{\infty}: F_{\infty} \ra \BC$ be the additive character $\psi_{\infty} (x) = e \big(\Tr_{F_{\infty} /\BR } (x) \big)$.  

Let $\fra$ be the vector space $\BR^{|S_{\infty}|}$. Accordingly, we denote a typical element by
$ t = (t_{\varv})_{ \varv | \infty }$. Let $\fra_{\BC} $ be its complexification. Let $d t$ be the usual Lebesgue
measure on $\fra$. Following \cite{BM-Kuz-Spherical}, we also equip $\fra$ with the positive measure $d \mu (t) =  \prod_{ \varv | \infty } \hskip -1 pt \bfrac 1 2 t_{\varv} \sinh (\pi t_{\varv} ) d t_{\varv} .$ Let   ${\mathrm{Pl}} : \fra \ra \BC $ be defined by
\begin{align}\label{1eq: defn of Pl}
{\mathrm{Pl}} (t) = \prod_{\varv \text{ real} }  2 \cosh (\pi t_{\varv}) \prod_{\varv \text{ complex} }   { 2 \sinh (\pi t_{\varv})} / {t_{\varv}} . 
\end{align}
\footnote{In \S 6.7.2 and 6.7.3 of \cite{Venkatesh-BeyondEndoscopy}, the $1/2$ in (106) and (112) should be removed, so we have $ 2 \cosh (\pi t_{\varv})$ here instead of $ \cosh (\pi t_{\varv})$.} Note that $d \mu (t) / \mathrm{Pl} (t) $ is the Plancherel measure on the set of spherical tempered representations of $\PGL_2 (F_{\infty})$.
Define the logarithm function $ \log_F : F_{\infty}^{\times} \ra \fra $ by
\begin{align}\label{1eq: log}
\log_F (x) = (\log |x_{\varv}| )_{\varv | \infty}  .
\end{align}

\vskip 5 pt

\subsubsection*{Bessel kernel and Bessel transform}

Let $M > 2$, $N > 6$. We set
$ \SH (M, N) $ to be the space of functions $h : \fra \ra \BC $ that are of the following form,
\begin{align*}
h (t) = \prod_{\varv | \infty} h_{\varv} (t_{\varv}),
\end{align*}
where each $h_{\varv} : \BR \ra \BC $ extends to an even holomorphic function on the strip
$\big\{ s  = t + i \sigma : |\Im s | \leqslant M \big\}$ such that,  on the horizontal line $\Im s = \sigma$ ($|\sigma| \leqslant M$), we have uniformly 
\begin{align*}
h_{\varv} (t + i \sigma) \Lt e^{-\pi |t|} (|t|+1)^{- N}.
\end{align*}

We define the Bessel kernel $\SB   : F_{\infty}^{\times} \times \fra_{\BC} \ra \BC $  as follows. For $x \in F_{\infty}^{\times}$ and $\varnu  \in \fra_{\BC} = \BC^{|S_{\infty}|}$,
\begin{align*}
 \SB (x, \varnu ) = \prod_{ \varv | \infty } B_{\varv} (x_{\varv}, \varnu _{\varv}),
\end{align*}
where
\begin{align*}
& B_{\varv} (x_{\varv}, \varnu _{\varv}) = \frac 1 {\sin (\pi \varnu _{\varv}) } \big( J_{-2 \varnu _{\varv}} (4 \pi \sqrt {x_{\varv}}) - J_{2 \varnu _{\varv}} (4 \pi \sqrt {x_{\varv}}) \big), \\
& B_{\varv} (-x_{\varv}, \varnu _{\varv}) = \frac 1 {\sin (\pi \varnu _{\varv}) } \big( I_{-2 \varnu _{\varv}} (4 \pi \sqrt {x_{\varv}}) - I_{2 \varnu _{\varv}} (4 \pi \sqrt {x_{\varv}}) \big) = \frac {4 \cos (\pi \varnu _{\varv})} \pi  K_{2 \varnu _{\varv}} (4 \pi \sqrt {x_{\varv}}) ,
\end{align*}
if $\varv$ is real and $x_{\varv} > 0$, and
\begin{equation*}
B_{\varv} (x_{\varv}, \varnu _{\varv}) =  \frac 1 {\sin (\pi \varnu _{\varv}) } \big( J_{- \varnu _{\varv}} (4 \pi \sqrt {x_{\varv}}) J_{- \varnu _{\varv}} (4 \pi \sqrt { \widebar x_{\varv}}) - J_{ \varnu _{\varv}} (4 \pi \sqrt {x_{\varv}}) J_{ \varnu _{\varv}} (4 \pi \sqrt { \widebar x_{\varv}}) \big),  
\end{equation*}
if $\varv$ is complex; by definition,  we have
\begin{align*}
 B_{\varv} ( x_{\varv}, \varnu _{\varv}) = \bfJ_{ \varnu _{\varv} } (\hskip -2 pt \sqrt {x_{\varv}}).
\end{align*}

\begin{rem}
	  For complex $\varv$, our definition of $B_{\varv} (x_{\varv}, \varnu _{\varv})$ is slightly different from that of \cite{Venkatesh-BeyondEndoscopy}, in which he uses $I_{\varnu } (x)$ instead of $J_{\varnu } (x)$. But we have the formulae $ I_{\varnu } (x) = e^{- \frac 1 2 \pi i \varnu } J_{\varnu }  ( i x  ) $ and  $ I_{\varnu } (x) = e^{ \frac 1 2 \pi i \varnu } J_{\varnu }  ( - i x  ) $ {\rm(}see \cite[3.7 (2)]{Watson} and it is understood  that $i = e^{\frac 1 2 \pi i }$ and $ - i = e^{- \frac 1 2 \pi i }${\rm)}. So the difference is only up to the sign of $x_{\varv}$. This sign difference however would cause a little inconsistency between the Kuznetsov formula for $\SL_2 (\BC)$ in \cite{M-W-Kuz}{\rm(}or \cite{BM-Kuz-Spherical}{\rm)} and that in \cite{B-Mo}. From various sources in the literature, it is suggested that the latter {\rm(}and hence the formula of $B_{\varv} (x_{\varv}, \varnu _{\varv})$ here in terms of $ J_{\varnu } (x) ${\rm)} should be the correct one.
\end{rem}

Let $h (t)$ be a test function on $\fra$ belonging to $\SH (M, N)$ and define its Bessel integral transform $\varphi : F_{\infty}^{\times} \ra \BC $ by 
\begin{align}\label{1eq: Bessel transform}
\varphi (x) = \int_{ \fra } h (t) \SB (x, it) d \mu (t). 
\end{align} 
This Bessel transform arises in  the Kuznetsov-Bruggeman-Miatello trace formula as its Archimedean component.

\vskip 5 pt

\subsubsection*{Proposition 7 of Venkatesh}

The following theorem was proven for totally real $F$ by Venkatesh \cite[Proposition 7]{Venkatesh-BeyondEndoscopy}, as the main ingredient in the Archimedean theory of his work. He mentioned that  at the time he had not managed to accomplish the general case. 
Given this theorem, the main results of Venkatesh for $\mathrm{Sym}^2$ may be now extended to arbitrary number field $F$.

\begin{thm}[Proposition 7 of Venkatesh]\label{thm: Venkatesh} Let $F$ be a number field. Suppose that $h (t)  \in \SH (M, N)$ and $\varphi (x)$ is the Bessel transform of $h (t)$ defined by \eqref{1eq: Bessel transform}. Let $\mathrm{Pl} : \fra \ra \BC $ be defined as in {\rm\eqref{1eq: defn of Pl}}. Define $ \widehat { h \cdot \mathrm{Pl}  } : \fra \ra \BC $ to be the Fourier transform of $h (t) \mathrm{Pl} (t)$, so for $\tv \in \fra$,
	\begin{align}\label{1eq: hat of h Pl}
\widehat { h \cdot \mathrm{Pl} } (\tv) = \int_{ \fra } e^{\shskip i \shskip \langle t, \tv \rangle } h (t) \mathrm{Pl} (t)  d t ,
	\end{align}
	where $\langle t, \tv \rangle = \sum_{ \varv | \infty } t_{\varv} \tv_{\varv} $. Defining $\log_F : F_{\infty}^{\times} \ra \fra $ as in \eqref{1eq: log}, we have
	\begin{align}\label{1eq: main identity}
	 \widehat { h \cdot \mathrm{Pl} } (\log_F |\kappa| ) = \int_{ F_{\infty} } \varphi \lp \frac 1 {2 + \kappa+ \kappa\-} x^2 \rp \psi_{\infty} (x) \frac {d x} {\mathrm{Norm} (x)};
	\end{align}
the	convergence is guaranteed  as long as $M, N$ are sufficiently large.
\end{thm}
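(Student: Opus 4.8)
The plan is to factor the global identity \eqref{1eq: main identity} into a product of purely local identities, one for each Archimedean place, and then to dispatch the complex places by means of Theorem \ref{thm: regularized W-S, C}. Every ingredient factors: $h(t) = \prod_{\varv\mid\infty} h_\varv(t_\varv)$, the kernel $\SB(x,it) = \prod_{\varv\mid\infty} B_\varv(x_\varv, it_\varv)$, the measures $d\mu$ and $dt$, and $\mathrm{Pl}$ all factor, while $\psi_\infty = \prod_\varv \psi_\varv$, $\mathrm{Norm}$, and the dilation $\tfrac{1}{2+\kappa+\kappa\-}x^2$ act coordinatewise. Hence, by Fubini, both sides of \eqref{1eq: main identity} become products over $\varv\mid\infty$, and it suffices to prove the local identity $\int_{\BR} e^{it\log|\kappa_\varv|} h_\varv(t)\mathrm{Pl}_\varv(t)\,dt = \int_{F_\varv}\varphi_\varv\big(\tfrac{1}{2+\kappa_\varv+\kappa_\varv\-}x^2\big)\psi_\varv(x)\tfrac{dx}{\mathrm{Norm}(x)}$, where $\varphi_\varv(w) = \int_{\BR} h_\varv(t) B_\varv(w,it)\,d\mu_\varv(t)$ and $d\mu_\varv(t) = \tfrac12 t\sinh(\pi t)\,dt$.

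For a real place this is exactly Venkatesh's Proposition 7 \cite{Venkatesh-BeyondEndoscopy}; it may also be recovered from the one-dimensional regularized formulae \eqref{0eq: Weber-Sch regularized} and \eqref{0eq: Weber-Sch regularized, 2}, the sign of $2+\kappa_\varv+\kappa_\varv\-$ (equivalently of $\kappa_\varv$) selecting the $J$-Bessel ($B$-transform) kernel when it is positive and the $K$-Bessel ($K$-transform) kernel when it is negative, exactly as in \S\ref{sec: analysis over R}.

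The genuinely new case is a complex place $\varv$, where $B_\varv(w,it) = \bfJ_{it}(\sqrt w)$. Writing $z^2 = 2+\kappa_\varv+\kappa_\varv\-$ (the square-root ambiguity in $\sqrt{x^2/z^2} = x/z$ being harmless since $\bfJ_{it}$ is even), interchanging the $t$- and $x$-integrations and rescaling $x\mapsto zx$ (the measure $|dx\nwedge d\widebar x|/|x|^2$ is dilation invariant) reduces the inner $x$-integral to $\int_{\BC}\bfJ_{it}(u)\psi_\varv(zu)\tfrac{du}{|u|^2}$. Since this does not converge for $\bfJ_{it}$ itself, I would split $\bfJ_{it} = \boldsymbol{M}_{it} + \boldsymbol{R}_{it}$ via \eqref{0eq: defn M}. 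In polar coordinates the $\boldsymbol{M}_{it}$-integral is precisely the special Weber--Schafheitlin integral \eqref{0eq: special W-S, C} with $d=0$ and frequency $ye^{i\theta}$ identified with $z$; because $z+\sqrt{z^2-4}$ has modulus $2|\kappa_\varv|^{1/2}$, the two bracketed terms of \eqref{0eq: special W-S, C} produce $Y(z)^{\pm 2it} = e^{\pm it\log|\kappa_\varv|}$ together with two monomial terms $y^{\pm 2it}$. The complementary $\boldsymbol{R}_{it}$-integral is the $\BC$-Fourier transform of a pure monomial $x^{a}\widebar x^{b}$, which I would evaluate directly by the complex analogue of \eqref{0eq: Fourier of x}; its value cancels exactly those two monomials. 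Folding the prefactor $1/(d^2-\mu^2) = 1/t^2$ of \eqref{0eq: special W-S, C} against $d\mu_\varv(t) = \tfrac12 t\sinh(\pi t)\,dt$ and using that $h_\varv$ is even (so the $e^{+it\log|\kappa_\varv|}$ and $e^{-it\log|\kappa_\varv|}$ pieces coincide) leaves precisely $\int_{\BR} e^{it\log|\kappa_\varv|} h_\varv(t)\tfrac{2\sinh(\pi t)}{t}\,dt$, which is the local left-hand side since $\mathrm{Pl}_\varv(t) = 2\sinh(\pi t)/t$.

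The main obstacle is analytic rather than algebraic: justifying the interchange of the $t$- and $x$-integrations and the splitting $\bfJ_{it} = \boldsymbol{M}_{it}+\boldsymbol{R}_{it}$, since neither the Bessel integral nor its regularizing monomial integral converges absolutely on its own. I would control this through the decay $h_\varv(t+i\sigma)\Lt e^{-\pi|t|}(|t|+1)^{-N}$ and the holomorphy on $|\Im s|\leqslant M$ built into $\SH(M,N)$, shifting the $t$-contour to render the Bessel integrals absolutely convergent, exactly the order-shifting device described for the real case in \S\ref{sec: analysis over R}. The second delicate point is the monomial bookkeeping: checking that the gamma factors arising from the complex version of \eqref{0eq: Fourier of x} match those produced by \eqref{0eq: special W-S, C}, so that the cancellation is exact. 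The hypotheses $M>2$ and $N>6$ are precisely what make all these operations legitimate and the final integral convergent.
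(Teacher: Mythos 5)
Your proposal is correct and follows essentially the same route as the paper's own proof in \S\ref{sec: Proof of Venkatesh}: reduce to a single complex place, split the kernel as $\bfJ_{it} = \boldsymbol{M}_{it} + \bfR_{it}$, evaluate the $\boldsymbol{M}_{it}$ part via Theorem \ref{thm: regularized W-S, C}, evaluate the monomial $\bfR_{it}$ part via the complex Fourier transform of $|z|^{2it}$ (the paper's Lemma \ref{lem: integral of P}) after shifting the $t$-contour to $\Im t = -\sigma$, and exploit the exact cancellation of the $|k|^{-2it}$ terms --- precisely the paper's \eqref{4eq: part 1} plus \eqref{4eq: part 2}. The analytic caveats you flag (interchange of integration justified by the decay of $h$ together with integration by parts near $z = \infty$, and the contour shift to make the monomial piece converge) are exactly the devices the paper uses.
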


Since the situation in the presence of multiple Archimedean places is a ``product" of
situations involving just one Archimedean place,  it suffices to check it in the case that $F_{\infty} = \BR$ or $F_{\infty} = \BC$. The case $F_{\infty} = \BR$ has already been settled by Venkatesh (see also \S \ref{sec: analysis over R}). The case $F_{\infty} = \BC$ will be proven in \S \ref{sec: Proof of Venkatesh} by using our regularized Weber-Schafheitlin integral formula for $ \bfJ_{ \mu} (z) $ in Theorem \ref{thm: regularized W-S, C}. 

\begin{cor}
	The main results of Venkatesh on Beyond Endoscopy for $\mathrm{Sym}^2$ on $\GL_2$, in particular his Proposition {\rm 2} and Theorem {\rm 1}, are valid over an arbitrary number field $F$.  
\end{cor}

\begin{acknowledgement}
I thank all the participants of my analytic number theory seminar, especially, Dongwen Liu and Zhicheng Wang, at Zhejiang University  in the autumn of 2018---Venkatesh's paper was the first one we studied in the seminar. 
I thank Roman Holowinsky and Akshay Venkatesh for their comments. I also thank the referees for careful
readings and helpful comments.
\end{acknowledgement}

	\section{Preliminaries}

	\subsection{Classical Bessel functions}\ 
	
	\vskip 5 pt
	
	\subsubsection{Basic properties of  $J_{\varnu } (z)$, $H^{(1 )}_{\varnu }  (z) $ and $H^{(2)}_{\varnu }  (z) $} Let $\varnu  $ be a complex number.
	Let $J_{\varnu } (z)$, $H^{(1,\shskip  2)}_{\varnu }  (z) $ denote the   Bessel function of the first kind and the Hankel functions of order $\varnu $. They all satisfy the Bessel differential equation
	\begin{equation}\label{eq: Bessel Equation}
	z^2 \frac {d^2 \tw} {d z^2}  (z) + z \frac {d \tw} {d z} (z) + \lp z^2 - \varnu ^2 \rp  \tw(z) = 0.
	\end{equation}
	The function $J_{\varnu } (z)$ is defined by the series    (see \cite[3.1 (8)]{Watson})
	\begin{equation}\label{2def: series expansion of J}
	J_{\varnu } (z) = \sum_{n=0}^\infty \frac {(-)^n \lp   z /2 \rp^{\varnu +2n } } {n! \Gamma (\varnu  + n + 1) }.
	\end{equation}
	When $\varnu  \neq -1, -2, -3, ...$, it follows from \cite[3.13 (1)]{Watson} that
	\begin{equation}\label{2eq: bound for J}
	  J_{\varnu } (z)   = \frac {(z/2)^{\varnu }} {\Gamma (\varnu +1)} \lp 1 + O_{\varnu } \lp  |z|^2 \rp \rp , \hskip 10 pt |z| \Lt 1,
	\end{equation}
	and, together with  the Bessel differential equation and the recurrence formula \cite[3.2 (4)]{Watson},
	\begin{align*}
	z J_{\varnu }' (z) = \varnu  J_{\varnu } (z) - z J_{\varnu +1} (z),
	\end{align*} that
	\begin{equation}\label{2eq: bound for J, 2}
	 z^{r} (d/dz)^{r} J_{\varnu } (z) \Lt_{\, r, \shskip \varnu } \left|z^{\varnu } \right| , \hskip 10 pt |z| \Lt 1,
	\end{equation}
	with the implied constants uniformly bounded when $\varnu $ lies in a given compact subset of $\BC \smallsetminus \{-1, -2, -3, ...\} $.

	We have the following connection formulae  (see \cite[3.61 (1, 2)]{Watson})
	\begin{align}
	\label{2eq: J and H} 
	& J_\varnu  (z) = \frac {H_\varnu ^{(1)} (z) + H_\varnu ^{(2)} (z)} 2, \hskip 30 pt 
	J_{-\varnu } (z) =  \frac {e^{\pi i \varnu } H_\varnu ^{(1)} (z) + e^{-\pi i \varnu } H_\varnu ^{(2)} (z) } 2.
	\end{align}
	
	According to \cite[7.2 (1, 2)]{Watson} and \cite[7.13.1]{Olver}, we have  Hankel's expansion of $H^{(1)}_{\varnu } (z)$ and $H^{(2)}_{\varnu } (z)$ as follows,
	\begin{align}\label{2eq: asymptotic H (1)}
	H^{(1)}_{\varnu } (z) &= \lp \frac 2 {\pi z} \rp^{\frac 1 2} e^{ i \lp z - \frac 12 {\pi \varnu }    - \frac 14 \pi    \rp }    
	\lp \sum_{n=0}^{N-1} \frac {(-)^n \cdot (\varnu , n) } {(2iz)^n} + E^{(1)}_N (z) \rp, \\
	\label{2eq: asymptotic H (2)}
	H^{(2)}_{\varnu } (z) &= \lp \frac 2 {\pi z} \rp^{\frac 1 2} e^{ - i \lp z - \frac 12 {\pi \varnu }    - \frac 1 4 \pi   \rp }   
	\lp \sum_{n=0}^{N-1} \frac {  (\varnu , n) } {(2iz)^n} + E^{(2)}_N (z) \rp,
	\end{align} 
	with $(\varnu , n) = \Gamma \lp \varnu  + n +   1 / 2 \rp / n! \Gamma \lp \varnu  - n +   1 / 2 \rp$, of which \eqref{2eq: asymptotic H (1)} is valid when $z$ is such that $- \pi +  \delta \leqslant \arg z \leqslant 2 \pi -   \delta$,  and  \eqref{2eq: asymptotic H (2)} when $- 2 \pi +  \delta \leqslant \arg z \leqslant   \pi -  \delta$,   $  \delta  $ being any positive acute angle, and
	\begin{align}\label{2eq: estimates for E}
	z^r (d/dz)^r E_N^{(1, \shskip 2)} (z)  \Lt_{\shskip \delta, \shskip r, \shskip N, \shskip \varnu } 1/ |z|^{N }
	\end{align} 
	for   $|z| \Gt 1$ and $\arg z$  in the range indicated as above. In view of the error bounds in \cite[7.13.1]{Olver}, the dependence of the implied constant on $\varnu $ is uniform in any given compact set.
	
	\vskip 5 pt

	\subsubsection{Some integral formulae}
	
	When the order $\varnu  = m$ is an integer, we have the integral representation of Bessel for $J_m (z)$ as follows (see \cite[2.2 (1)]{Watson}),
	\begin{align}\label{2eq: integral repn of J}
	i^{\shskip m} J_{m} (z) = (- i)^m J_{- m} (z) 
	= \frac { 1 } {2 \pi  } \int_0^{2\pi} e^{ i z \cos \phi - i m \phi} d \phi .
	\end{align}
	We have the following formula due to Weber, Sonine and Schafheitlin (see \cite[13.24 (1)]{Watson} and \cite[6.561 14]{G-R}),
	\begin{align}\label{1eq: WSS formula}
	\int_0^{\infty} \frac {J_{\varnu } (x) d x} {x^{\varnu - \mu+1}} = \frac {\Gamma (\mu / 2)} {2^{\varnu -\mu+1} \Gamma (\varnu  - \mu/2 +1)},
	\end{align}
	in which $0 < \Re \mu < \Re \varnu  + \bfrac{3}{\shskip 2}$ (this is the domain of convergence in \cite[6.561 14]{G-R}, while it is literally $0 < \Re \mu < \Re \varnu  + \bfrac{1}{\shskip 2}$ in \cite[13.24]{Watson}, for Watson only considers the domain of absolute convergence).

	\subsection{\texorpdfstring{Preliminaries on the Bessel function $\bfJ_{ \mu,\shskip  d} (z)$}{Preliminaries on the Bessel function $J_{ \mu,\shskip  d} (z)$}}

	Replacing $d/dz$ by $\partial / \partial z$, we denote by $\nabla_{\varnu }$ the differential operator that occurs in \eqref{eq: Bessel Equation}, namely,
	\begin{equation}\label{2eq: nabla}
	\nabla_{\varnu } = z^2 \frac {\partial^2 } {\partial z^2}  + z \frac {\partial  } {\partial z} +   z^2 - \varnu ^2    .
	\end{equation}
	Its conjugation will be  denoted by $\overline \nabla_{\varnu }$,
	\begin{equation}\label{2eq: nabla bar}
	\ov  \nabla_{\varnu } = \widebar z^2 \frac {\partial^2 } {\partial \widebar z^2}  + \widebar z \frac {\partial  } {\partial \widebar z} +  \widebar z^2 - \varnu ^2    .
	\end{equation}
	From the definition of $\bfJ_{ \mu,\shskip  d} (z) $ as in (\ref{0def: J mu m (z)}, \ref{0eq: defn of Bessel}), we infer that 
	\begin{align}\label{2eq: nabla J = 0}
	\nabla_{\mu + d} \lp \bfJ_{ \mu,\shskip  d} (z/4 \pi) \rp = 0, \hskip 10 pt \ov \nabla_{  \mu - d} \lp \bfJ_{ \mu,\shskip  d} (z/4 \pi) \rp = 0.
	\end{align} 
	
	Recall the definition of $\boldsymbol {R}_{ \mu,\shskip  d} (z)$ given by (\ref{0eq: defn P mu m (z)}, \ref{0eq: defn of R}). Suppose at the moment that $|z| \leqslant 2$, say. It follows from \eqref{2eq: bound for J} that if $\mu$ is not an integer, then
	\begin{align}\label{2eq: bound for J mu m}
	\bfJ_{ \mu,\shskip  d} (z) - \boldsymbol {R}_{ \mu,\shskip  d} (z)  \Lt_{\, \mu,\shskip  d} \left| |z|^{2- 2\mu} \right| + \left| |z|^{2 +2 \mu} \right|.
	\end{align} 
	Some calculations by the formulae  of $ \left.( \partial J_{\varnu }  (z) /\partial \varnu  ) \right|_{\varnu  = \pm m}$  ($m = 0, 1, 2,...$) in \cite[\S 3.52 (1, 2)]{Watson} would imply that in the non-generic case when $  \mu  $ is an integer, 
	\begin{align}\label{2eq: bound for J mu m, 2}
	\bfJ_{ \mu,\shskip  d} (z) - \boldsymbol {R}_{ \mu,\shskip  d} (z) \Lt_{\, \mu,\shskip  d}   |z|^{2 - 2 |\mu|} \log (4/|z|).
	\end{align}
	It will be convenient to unify \eqref{2eq: bound for J mu m} and \eqref{2eq: bound for J mu m, 2} in a slightly weaker form as follows,
	\begin{align}\label{2eq: bound for J mu m, weak}
	\bfJ_{ \mu,\shskip  d} (z) - \boldsymbol {R}_{ \mu,\shskip  d} (z)  \Lt_{\, \mu,\shskip  d, \shskip \lambdaup}  |z|^{2 - 2 \lambdaup},
	\end{align}
	with $\lambdaup = |\Re \mu|$ if $  \mu$ is not an integer and  $\lambdaup > |\Re \mu|$ if otherwise. Further, we have
	\begin{align}\label{2eq: bound for J mu m, weak, 2}
	z^{\shskip r} \widebar z^{ \,s} (\partial /\partial z)^r (\partial / \partial \widebar z)^{s} \bfJ_{ \mu,\shskip  d} (z)   \Lt_{\, r,\shskip s, \shskip \mu,\shskip  d, \shskip \lambdaup}  |z|^{- 2 \lambdaup} .
	\end{align}
	For example,  in the generic case, this is a direct consequence of  the bounds in \eqref{2eq: bound for J, 2}.

	\delete{Moreover, the bound in \eqref{2eq: bound for J, 2} implies that in the generic case 
	\begin{align}\label{2eq: bound for J mu m, 3}
	 z^{\shskip\alpha} \widebar z^{ \,s} (\partial /\partial z)^\alpha (\partial / \partial \widebar z)^{s} \bfJ_{ \mu,\shskip  d} (z)   \Lt_{\, \alpha,\shskip s, \shskip \mu,\shskip  d} \left| |z|^{- 2 \mu} \right| + \left| |z|^{ 2 \mu} \right|, \hskip 10 pt |z| \Lt 1;
	\end{align}
the estimate in \eqref{2eq: bound for J mu m, 3} is almost valid in the non-generic case at the cost of $\log (1/|z|)$, namely,
\begin{align}\label{2eq: bound for J mu m, 4}
z^{\shskip\alpha} \widebar z^{ \,s} (\partial /\partial z)^\alpha (\partial / \partial \widebar z)^{s} \bfJ_{ \mu,\shskip  d} (z)   \Lt_{\, \alpha,\shskip s, \shskip \mu,\shskip  d} |z|^{2 - 2 |\mu|} (|\log (1/|z|)| + 1), \hskip 10 pt |z| \Lt 1.
\end{align}
}
	
	In view of the connection formulae in \eqref{2eq: J and H}, we have another expression of $ \bfJ_{ \mu,\shskip  d} (z)$ in terms of Hankel functions,
	\begin{equation}\label{2eq: J = H1 + H2}
	\bfJ_{ \mu,\shskip  d} (z) =  \frac i  2 \hskip -1 pt \left( e^{  \pi i \mu} H^{(1)}_{\mu,\shskip  d} \lp 4 \pi   z \rp  - e^{-   \pi i \mu} H^{(2)}_{\mu,\shskip  d} ( 4 \pi   z ) \right),
	\end{equation}
	with the definition
	\begin{equation}\label{0def: H mu m (z)}
	H^{(1,\shskip  2)}_{\mu,\shskip  d} (z) = H^{(1,\shskip  2)}_{\mu + d} \lp   z \rp  H^{(1,\shskip  2)}_{\mu - d} \lp  { \widebar z} \rp.
	\end{equation}
	The reader should be warned that the product in \eqref{0def: H mu m (z)} is {\it not well-defined} as function on $\BC \smallsetminus \{0\}$.
	By \eqref{2eq: asymptotic H (1)}-\eqref{2eq: estimates for E}, we may write
		\begin{align}\label{1eq: J = W + W}
		\boldJ_{ \mu,\shskip  d} (z) =   {e (4 \Re  z)}   \boldsymbol{W}   (z) +   {e (- 4 \Re z)}   \boldsymbol{W}   (- z) + \boldsymbol{E}_N  (z), \
		\end{align}
		where 
		$\boldsymbol{W} (z)$ and $ \boldsymbol{E}_N (z) $ are real analytic functions on $\BC \smallsetminus \{0\}$ satisfying 
		\begin{align}\label{1eq: derivatives of W}
		z^{  r} \shskip \widebar z^{ \,s} (\partial /\partial z)^ r (\partial / \partial \widebar z)^{s} \boldsymbol{W}  (z) \Lt_{\,  r,\shskip s, \shskip N, \shskip \mu,\shskip  d} 1 / |z|,   & \\
		\label{1eq: derivatives of E}
		(\partial /\partial z)^ r (\partial / \partial \widebar z)^{s} \boldsymbol{E}_N   (z) \Lt_{\,  r,\shskip s, \shskip N, \shskip \mu,\shskip  d} 1 / |z|^{N+1},  &
		\end{align}   
	for $|z|  \geqslant 1$.


	\subsection{The hypergeometric function} Our reference for the hypergeometric function is Chapter II of \cite{MO-Formulas}. 
	
	The hypergeometric function ${_2F_1} (a, b; c; z)$  is defined by the Gauss series
	\begin{align}
 {_2F_1} (a, b; c; z) = \frac {\Gamma (c)} {\Gamma (a) \Gamma (b)} \sum_{n=0}^{\infty} \frac {\Gamma (a+n) \Gamma (b+n)} {\Gamma (c+n) n!} z^n
	\end{align} 
	within its circle of convergence $|z| < 1$, and  by analytic continuation elsewhere. The series is absolutely convergent on the unit circle $|z| = 1$ if $\Re(a+b-c) < 0$. 
	The function $ {_2F_1} (a, b; c; z) $ is a single-valued analytic function of $z$ in the complex plane with a branch cut along the positive real axis from
	$1$ to $\infty$. Moreover, $  {_2F_1} (a, b; c; z)$ is analytic in $a$, $b$ and $c$, except for $c = 0, -1, -2,...$.
	
	The hypergeometric differential equation satisfied by $ {_2F_1} (a, b; c; z)$ is as follows,
	\begin{align}\label{2eq: hypergeometric equation}
	z (1-z) \frac {d^2 \tw} {d z^2}  (z) + (c - (a+b+1) z ) \frac {d \tw} {d z} (z) - a b  \tw(z) = 0.
	\end{align}
	It has three regular singular
	points $z = 0, 1, \infty$. In the generic case when none of $c$, $ a-b$ and $ c-a-b$ is an integer, two linearly independent solutions of \eqref{2eq: hypergeometric equation} in the vicinity of $z = \infty$ are given by
	\begin{equation}\label{2eq: G1 and G2}
	\begin{split}
	& G_1 (a, b; c; z) = z^{-a} {_2F_1} (a, a-c+1; a-b+1; 1/z), \\
	& G_2 (a, b; c; z) = z^{-b} {_2F_1} (b, b-c+1; b-a+1; 1/z). 
	\end{split}
	\end{equation}
	Finally, we record here  the transformation formula with respect to $z \ra 1/z$ (see \cite[\S 2.4.1]{MO-Formulas}),
	\begin{equation}\label{2eq: transformation z - 1/z}
	\begin{split}
	{_2F_1} (a, b; c; z)  =\, & \frac {\Gamma (c) \Gamma (b-a)} {\Gamma (b) \Gamma (c-a) } (-z)^{-a} {_2F_1} ( a, a-c+1; a-b+1; 1/z ) \\
	+\, & \frac {\Gamma (c) \Gamma (a-b)} {\Gamma (a) \Gamma (c-b) } (-z)^{-b} {_2F_1} ( b, b-c+1; b-a+1; 1/z ),\\
	&\hskip 40 pt |\arg (-z)| < \pi, \ a-b \neq \pm m, \ m=0, 1, 2,....
	\end{split}
	\end{equation}

	\subsection{A lemma on oscillatory integrals}
	
	The following lemma  will be very useful in our later analysis.
	
	\begin{lem}\label{lem: integration by parts}
		Let $a,  b$ and $\theta$ be real numbers such that $ |b| < |a| $. 
		Let $0 < A < B \leqslant \infty$, $\gamma < \bfrac{1}{2}$, and $M$ be a positive integer.
Suppose that $ f (z) $ is a smooth function on $\BC$ which is supported on the annulus $  |z| \in [A, B]  $  and satisfies $$ x^{r} (\partial / \partial x)^{r} (\partial / \partial \phi)^{s} f  (x e^{i\phi}  ) \Lt_{r, \, s} x^{2 \gamma - 2} $$ for all nonnegative integers $r, s$ such that $r + s \leqslant 2 M$ {\rm(}equivalently, $$z^{  r} \shskip \widebar z^{ s} (\partial /\partial z)^r (\partial / \partial \widebar z)^{s} f (z) \Lt_{r, \, s} |z|^{2 \gamma - 2} $$ for all  $r + s \leqslant 2 M${\rm)}. Define
\begin{align*}
I (a, b, \theta ) = \int_{0}^{2 \pi} \int_0^{\infty} f \big(x e^{i\phi} \big) e ( x (a \cos (\phi + \theta) + b \cos \phi)  ) x d x \shskip d \phi.
\end{align*}
Then the integral $I (a, b, \theta )$ is convergent after integration by parts {\rm(}it is already absolutely convergent if $\gamma < 0${\rm)} and 
\begin{align*}
I  (a, b, \theta ) \Lt_{M, \, \gamma} \frac 1 {A^{2 M - 2\gamma} (|a| - |b|)^{2 M} } ,
\end{align*}
with the convergence and the implied constant  uniform for $\gamma$ in a compact set. Moreover, the integral $I  (a, b, \theta )$ gives rise to a continuous function in $(a, \theta)$ {\rm(}the continuity extends to all real values of $a $ if $ \gamma < 0 ${\rm)}.  
	\end{lem}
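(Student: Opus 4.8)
The plan is to reduce the two-dimensional oscillatory integral to a one-variable sinusoidal phase and then integrate by parts in whichever direction (radial or angular) keeps the phase or its angular derivative large. First I would record that the integrand carries the phase $e(x\, g(\phi))$ with
\[
g(\phi) = a\cos(\phi + \theta) + b\cos\phi = (a\cos\theta + b)\cos\phi - a\sin\theta\,\sin\phi = R\cos(\phi - \phi_0),
\]
where $R = \sqrt{a^2 + b^2 + 2ab\cos\theta}$ and $\phi_0$ is fixed by $R\cos\phi_0 = a\cos\theta + b$, $R\sin\phi_0 = a\sin\theta$. Since $\cos\theta \geqslant -1$ and $|b| < |a|$, one gets $R^2 \geqslant a^2 + b^2 - 2|a||b| = (|a|-|b|)^2$, so $R \geqslant |a|-|b| > 0$; moreover $g(\phi)^2 + g'(\phi)^2 = R^2$, so at every angle at least one of $|g(\phi)|$, $|g'(\phi)|$ is $\geqslant R/\sqrt 2$. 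This lower bound on the amplitude is the whole source of the denominator $(|a|-|b|)^{2M}$.

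Next I would fix smooth $2\pi$-periodic $\psi_1,\psi_2\geqslant 0$ with $\psi_1+\psi_2\equiv 1$, $\psi_1$ supported in $\{|\cos|>\tfrac12\}$ and $\psi_2$ in $\{|\sin|>\tfrac12\}$ (possible, as these two sets cover the circle), and set $\chi_i(\phi)=\psi_i(\phi-\phi_0)$, so that $\partial_\phi^s\chi_i\Lt_s 1$ uniformly in $R,\phi_0$, while on $\mathrm{supp}\,\chi_1$ one has $|g|\geqslant R/2$ and on $\mathrm{supp}\,\chi_2$ one has $|g'|\geqslant R/2$. Writing $F(x,\phi)=x\,f(xe^{i\phi})$, the hypothesis gives $\partial_x^{r}\partial_\phi^{s}F\Lt x^{2\gamma-1-r}$ for $r+s\leqslant 2M$, and I split $I=I_1+I_2$ according to $\chi_1,\chi_2$. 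On the $\chi_1$-piece I integrate by parts $2M$ times in $x$ via $e(x g)=(2\pi i g)^{-1}\partial_x e(x g)$: since $g,\chi_1$ are independent of $x$ and every boundary term vanishes (the smooth $f$ supported on $|z|\geqslant A$ vanishes to infinite order at $x=A$, and the amplitude decays at $x=\infty$), the amplitude becomes $(2\pi i g)^{-2M}\chi_1\,\partial_x^{2M}F\Lt |g|^{-2M}x^{2\gamma-1-2M}$; as $|g|\geqslant R/2$ and $2\gamma-1-2M<-1$, integrating over $x\in[A,\infty)$ and then $\phi$ yields $I_1\Lt R^{-2M}A^{2\gamma-2M}$. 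On the $\chi_2$-piece I integrate by parts $2M$ times in $\phi$ via $e(x g)=(2\pi i x g')^{-1}\partial_\phi e(x g)$: there are no boundary terms by periodicity, the factor $x^{-2M}$ comes out, and the key point is that on $\mathrm{supp}\,\chi_2$ one has $1/g'=-1/(R\sin(\phi-\phi_0))$ with $|\sin|\geqslant\tfrac12$, whence $\partial_\phi^{m}(1/g')\Lt_m 1/R$, so each of the $2M$ factors contributes $1/R$; with $\partial_\phi^{s}F\Lt x^{2\gamma-1}$ this again gives an amplitude $\Lt R^{-2M}x^{2\gamma-1-2M}$ and $I_2\Lt R^{-2M}A^{2\gamma-2M}$. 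Combining and using $R\geqslant|a|-|b|$ gives $I\Lt A^{2\gamma-2M}(|a|-|b|)^{-2M}$, i.e. the stated bound, with constants depending only on $M,\gamma$ and uniform for $\gamma$ in a compact set.

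For the convergence and continuity claims, the integrated-by-parts expressions are absolutely convergent because their integrands are dominated by the integrable majorant $x^{2\gamma-1-2M}\mathbbm 1_{[A,B]}$ (here $2\gamma-1-2M<-1$), which is exactly the asserted convergence after integration by parts; when $\gamma<0$ the original integrand is already dominated by $x^{2\gamma-1}\mathbbm 1_{[A,B]}$. Continuity in $(a,\theta)$ then follows from the dominated convergence theorem, since $R,\phi_0,g,g',\chi_1,\chi_2$ depend continuously on $(a,\theta)$, with $R$ bounded below on a neighbourhood of any point satisfying $|b|<|a|$, and the majorant above is locally uniform; when $\gamma<0$ one applies dominated convergence directly to the original integral, whose majorant $x^{2\gamma-1}\mathbbm 1_{[A,B]}$ is uniform in all real $a$, so continuity extends to $|a|\leqslant|b|$ as well.

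The main obstacle is the mismatch of scales between the radial and angular integrations by parts: a single differential operator does not cleanly separate the power of $A$ from the power of $|a|-|b|$. Splitting by the partition of unity, governed by the identity $g^2+g'^2=R^2\geqslant(|a|-|b|)^2$, is what resolves this, and together with the uniform estimate $\partial_\phi^{m}(1/g')\Lt 1/R$ on $\mathrm{supp}\,\chi_2$ it produces the exact exponent $(|a|-|b|)^{-2M}$.
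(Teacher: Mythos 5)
Your proof is correct, but it takes a genuinely different route from the paper's. The paper's proof rests on a single structural observation: the phase $p = x(a\cos(\phi+\theta)+b\cos\phi)$ is a linear form in $z = xe^{i\phi}$, so $e(p)$ is a plane wave and hence an eigenfunction of the polar Laplacian $\mathrm{D} = (\partial/\partial x)^2 + x\-(\partial/\partial x) + x^{-2}(\partial/\partial\phi)^2$ with eigenvalue $-4\pi^2(a^2+b^2+2ab\cos\theta)$; since $\mathrm{D}$ is self-adjoint for the measure $x\,dx\,d\phi$, each partial integration with respect to $\mathrm{D}$ trades two derivatives on $f$ (costing $x^{-2}$, by hypothesis) for one factor $(a^2+b^2+2ab\cos\theta)\- \leqslant (|a|-|b|)^{-2}$, and $M$ applications followed by a trivial bound finish the proof with no partition of unity and no case analysis. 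You instead diagonalize the angular phase as $g(\phi) = R\cos(\phi-\phi_0)$, use the identity $g^2 + g'^2 = R^2 \geqslant (|a|-|b|)^2$ to split the circle into a region where $|g| \geqslant R/2$ (radial integration by parts) and a region where $|g'| \geqslant R/2$ (angular integration by parts). This is the classical non-stationary-phase argument: it is more elementary, it makes visible exactly where the oscillation is harvested, and it would survive for phases that are not plane waves; the price is the extra bookkeeping (uniform bounds on derivatives of $1/g'$ and of the translated cutoffs $\psi_i(\phi-\phi_0)$), which you carry out correctly, and the derivative budget $r+s\leqslant 2M$ is respected in both pieces.

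One point deserves explicit care. The integral in the lemma is iterated with the $x$-integration innermost, and on the support of $\chi_2$ the inner $x$-integral can diverge when $\gamma \geqslant 0$ (the radial phase $g$ may vanish there), so your angular integration by parts tacitly performs the $\phi$-integral first, i.e.\ swaps the order of integration in a conditionally convergent integral. To make the decomposition $I = I_1 + I_2$ and the resulting identities rigorous, one should define the regularized value through a smooth radial truncation $w_X$, integrate by parts, and let $X \ra \infty$; your bounds are uniform in $X$, so dominated convergence closes the argument. This is the same convention under which the paper's own formal self-adjointness manipulation (which also mixes $x$- and $\phi$-derivatives in a non-absolutely-convergent integral) is justified, so you lose no rigor relative to the paper, but it should be said. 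Finally, a trivial slip: with your convention $g(\phi) = R\cos(\phi - \phi_0)$ one needs $R\sin\phi_0 = -a\sin\theta$ rather than $+a\sin\theta$; nothing downstream depends on this.
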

	
	\begin{proof}
	Put $p (x, \phi; a, b, \theta) = x (a \cos (\phi + \theta) + b \cos \phi)$. Define the  differential operator $
	\mathrm{D} = (\partial / \partial x)^2 + (\partial / \partial x)/x + (\partial / \partial \phi)^2 / x^2 $
	so that $$\mathrm{D} \big(e ( p (x, \phi; a, b, \theta) )\big) = -4 \pi^2 (a^2 + b^2 + 2 ab \cos \theta) \cdot e ( p (x, \phi; a, b, \theta)  ).$$ Note that $\mathrm{D} $ is self-adjoint. In view of the conditions on $f (z)$, it follows from an application of the partial integration with respect to $\mathrm{D}$ that
	\begin{align*}
	I  (a, b, \theta ) = - \frac 1 {4 \pi^2 (a^2 + b^2 + 2 ab \cos \theta) }   \int_{0}^{2 \pi} \int_0^{\infty} \mathrm{D}\shskip f \big(x e^{i\phi} \big) e (  p (x, \phi; a, b, \theta) ) x d x \shskip d \phi.
	\end{align*} 
	Since $x \mathrm{D}  f (x e^{i\phi} ) \Lt x^{2 \gamma - 3}$, the integral above is absolutely convergent as $\gamma < \bfrac{1}{2}$. Repeating the partial integration above $M$ times and then bounding the resulting integral trivially, along with  $ a^2 + b^2 + 2 ab \cos \theta \geqslant (|a|-|b|)^2 $ and $x \mathrm{D}^M  f (x e^{i\phi} ) \Lt_{M} x^{2 \gamma - 2M - 1} $,  we obtain the estimates for $I  (a, b, \theta )$ in the lemma. Finally, the continuity of $ I  (a, b, \theta ) $ is obvious.
	\end{proof}

\section{More on the hypergeometric function}\label{sec: hypergeometric}

We shall be concerned with the hypergeometric differential equation for 
\begin{align}\label{2eq: a, b, c}
a = \frac {\rho + \varnu } 2, \hskip 10 pt b = \frac {\rho - \varnu } 2, \hskip 10 pt c = \frac 1 2.
\end{align}
Define the corresponding hypergeometric differential operator 
\begin{align}\label{2eq: nabla, 1}
\nabla_{\rho, \shskip \varnu } =  4 z (1-z) \frac {\partial^2  } {\partial z^2}   + 2( 1 - 2 (\rho + 1) z ) \frac {\partial } {\partial z}  -  \lp \rho^2 - \varnu ^2 \rp ,
\end{align}
and its conjugate
\begin{align}\label{2eq: nabla, 2}
\ov \nabla_{\rho, \shskip \varnu } =  4 \widebar z (1-\widebar z) \frac {\partial^2  } {\partial \widebar z^2}   + 2 ( 1 - 2 (\rho + 1) \widebar z ) \frac {\partial } {\partial \widebar z}  -  \lp \rho^2 - \varnu ^2 \rp .
\end{align}
For the choice of $a, b, c$ in \eqref{2eq: a, b, c} such that neither $\varnu  $ nor $  \rho - \bfrac 1 2 $ is an integer, we denote
\begin{align}\label{2eq: defn of G(1), G(2)}
G_{\rho, \shskip \varnu }^{(1)} (z) = G_1 (a, b; c; z), \hskip 10 pt G_{\rho, \shskip \varnu }^{(2)} (z) = G_2 (a, b; c; z).
\end{align}
Let $ F_{\rho, \shskip \varnu }^{(1)} (z) $ and $ F_{\rho, \shskip \varnu }^{(2)} (z) $ be defined as in \eqref{1eq: defn of F(1,2)}. Note that their hypergeometric series expansions are absolutely convergent for all  $|z| \leqslant 1$ if $\Re \rho < \bfrac 1 2$. Further, we have
\begin{align}\label{2eq: G(1,2) and F(1,2)}
G_{\rho, \shskip \varnu }^{(1)} (z) = z^{-\frac 1 2 (\rho + \varnu )} F_{\rho, \shskip \varnu }^{(1)} (1/z), \hskip 10 pt G_{\rho, \shskip \varnu }^{(2)} (z) = z^{-\frac 1 2 (\rho - \varnu )} F_{\rho, \shskip \varnu }^{(2)} (1/z).
\end{align}

\begin{lem}\label{lem: hypergeometric equation} 
	Let $d$ be an integer. Let $\rho$, $\mu$ be such that $  \rho - \bfrac 1 2$ is not an integer and that $|\Re \mu | < \bfrac 1 2$ and $\mu \neq 0$.	Let $f (z)$ be a continuous function on the complex plane which is a solution of the following two differential equations,
	\begin{equation}\label{2eq: differential equations}
	\nabla_{ \rho, \shskip \mu + d} \, \tw = 0, \hskip 10 pt \overline \nabla_{ \rho, \shskip \mu - d} \, \tw = 0,
	\end{equation} 
	with the differential operators $\nabla_{ \rho, \shskip \mu + d} $ and $\overline \nabla_{ \rho, \shskip \mu - d}$ defined as in \eqref{2eq: nabla, 1} and \eqref{2eq: nabla, 2}. 
	Suppose further that $f (z)$ admits the following asymptotic,
	\begin{align}\label{2eq: asymptoic f (z)}
	f ( z ) \sim c_1  z^{- \frac 1 2 ( \rho + \mu + d)} \shskip \widebar  z^{ \shskip - \frac 1 2 (  \rho + \mu - d )}  + c_2   z^{\shskip - \frac 1 2 ( \rho - \mu - d ) } \shskip \widebar z^{\shskip - \frac 1 2 (  \rho - \mu + d) }, \hskip 10 pt |z| \ra \infty.
	\end{align}
	Then  \begin{align}\label{2eq: identity f (z)}
	f ( z) = c_1 G_{\rho, \shskip \mu+d}^{(1)} (z)  G_{\rho, \shskip \mu-d}^{(1)} (\widebar z) + c_2  G_{\rho, \shskip \mu+d}^{(2)} (z)  G_{\rho, \shskip \mu-d}^{(2)} (\widebar z) 
	\end{align}
	for all $z$ in the complex plane.
\end{lem}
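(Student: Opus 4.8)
The plan is to read the pair \eqref{2eq: differential equations} as an overdetermined (holonomic) system whose local solution space near $z = \infty$ is four-dimensional and spanned by the products $G_{\rho,\mu+d}^{(j)}(z)\,G_{\rho,\mu-d}^{(k)}(\ov z)$, then to cut the resulting four-term expansion of $f$ down to its two diagonal terms by a monodromy argument, and finally to fix the two surviving constants from the prescribed asymptotic \eqref{2eq: asymptoic f (z)}. Concretely, I first record that $\nabla_{\rho,\mu+d}$ and $\ov\nabla_{\rho,\mu-d}$ are, up to the factor $4$, the hypergeometric operators \eqref{2eq: hypergeometric equation} with parameters $\lp\frac{\rho+\mu+d}{2},\frac{\rho-\mu-d}{2},\frac12\rp$ in $z$ and $\lp\frac{\rho+\mu-d}{2},\frac{\rho-\mu+d}{2},\frac12\rp$ in $\ov z$. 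Since $\mu\neq0$ and $|\Re\mu|<\frac12$ force $\mu\notin\BZ$, and since $\rho-\frac12\notin\BZ$, the functions in \eqref{2eq: defn of G(1), G(2)} are genuine bases of solutions of the two equations near infinity. A continuous solution of a nondegenerate second-order ODE is real-analytic away from the singular points $z\in\{0,1\}$, so $f$ is real-analytic on $\BC\smallsetminus\{0,1\}$ and I may run the standard separation-of-variables argument: for fixed $\ov z$ one has $f=A_1(\ov z)G_{\rho,\mu+d}^{(1)}(z)+A_2(\ov z)G_{\rho,\mu+d}^{(2)}(z)$, the coefficients $A_j$ are recovered by a Wronskian and hence are real-analytic in $\ov z$, and substituting back into $\ov\nabla_{\rho,\mu-d}f=0$ and invoking the linear independence of $G_{\rho,\mu+d}^{(1)}(z),G_{\rho,\mu+d}^{(2)}(z)$ shows that each $A_j$ solves the $\ov z$-equation. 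This yields constants $\alpha_{jk}$ with
\[ f(z)=\sum_{j,k\in\{1,2\}}\alpha_{jk}\,G_{\rho,\mu+d}^{(j)}(z)\,G_{\rho,\mu-d}^{(k)}(\ov z) \]
in a neighbourhood of $z=\infty$.

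Next I would exploit that $f$ is single-valued on $\BC\smallsetminus\{0\}$. Writing $z=re^{i\phi}$ and letting $\phi\mapsto\phi+2\pi$ (so $z\mapsto e^{2\pi i}z$ and $\ov z\mapsto e^{-2\pi i}\ov z$), and using that each $G_{\rho,\varnu}^{(1,2)}(w)$ equals $w^{-a}$ or $w^{-b}$ times a single-valued function of $1/w$, the four products acquire the monodromy factors $1,\,1,\,e^{-2\pi i\mu},\,e^{2\pi i\mu}$ for $(j,k)=(1,1),(2,2),(1,2),(2,1)$ respectively; the two diagonal factors are $1$ precisely because $d\in\BZ$. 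As $|\Re\mu|<\frac12$ and $\mu\neq0$ give $2\mu\notin\BZ$, these three eigenvalues are pairwise distinct and the two off-diagonal ones differ from $1$. Subtracting the rotated identity from the original and using linear independence of the four basis products then forces $\alpha_{12}=\alpha_{21}=0$.

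Finally, with only the diagonal terms surviving I would match leading asymptotics. Since $G_{\rho,\varnu}^{(1)}(w)\sim w^{-\frac{\rho+\varnu}{2}}$ and $G_{\rho,\varnu}^{(2)}(w)\sim w^{-\frac{\rho-\varnu}{2}}$, the two diagonal products have leading monomials $z^{-\frac{\rho+\mu+d}{2}}\ov z^{-\frac{\rho+\mu-d}{2}}$ and $z^{-\frac{\rho-\mu-d}{2}}\ov z^{-\frac{\rho-\mu+d}{2}}$, decaying at the distinct rates $r^{-(\rho+\mu)}$ and $r^{-(\rho-\mu)}$; the hypothesis $|\Re\mu|<\frac12$ precludes any resonance between these rates and the integer-spaced lower-order corrections, so the two monomials are asymptotically independent and comparison with \eqref{2eq: asymptoic f (z)} gives $\alpha_{11}=c_1$ and $\alpha_{22}=c_2$. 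This is \eqref{2eq: identity f (z)} for large $|z|$; as both sides solve the same holonomic system and are real-analytic on the connected set $\BC\smallsetminus\{0,1\}$, the identity propagates by analytic continuation and then to all of $\BC$ by the assumed continuity of $f$.

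I expect the genuine work to lie in the first two steps: setting up the separation of variables rigorously (which needs the a priori regularity of $f$ together with the basis property of the $G$'s) and, above all, computing the monodromy so that single-valuedness removes \emph{exactly} the two cross terms. The careful bookkeeping of which hypothesis is consumed where---$\mu\notin\BZ$ for the basis property, $2\mu\notin\BZ$ for distinctness of the monodromy eigenvalues, and $\rho-\frac12\notin\BZ$ for the very definition of $G_{\rho,\varnu}^{(1,2)}$ and the absence of logarithmic solutions---is the delicate part of the argument.
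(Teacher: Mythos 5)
Your proposal follows essentially the same route as the paper's proof: expand $f$ in the four products $G^{(j)}_{\rho,\,\mu+d}(z)\,G^{(k)}_{\rho,\,\mu-d}(\ov z)$ by the theory of the hypergeometric equation, kill the two cross terms by a single-valuedness/monodromy argument (the paper phrases this via the multivaluedness of $(z/|z|)^{\pm\mu}$ and the choice $\arg z = 0, 2\pi$), identify the diagonal coefficients with $c_1, c_2$ by comparing with the asymptotic \eqref{2eq: asymptoic f (z)} (using $|\Re \mu| < \bfrac 1 2$ so the error terms lie strictly below both main terms), and finally extend by the continuity of $f$. The one slip is in your last step: the right-hand side of \eqref{2eq: identity f (z)} is single-valued and real-analytic only on the cut plane $\BC \smallsetminus [0,1]$, not on $\BC \smallsetminus \{0,1\}$ (inside the unit disk the diagonal products are themselves multivalued, as the paper's remark following the lemma points out), so the analytic-continuation argument should be run on $\BC \smallsetminus [0,1]$ and the identity then extended across the cut using the continuity of $f$---exactly as the paper does.
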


\begin{proof}
	By the theory of differential equations, it follows from \eqref{2eq: differential equations} that $f (z)$ may be uniquely written as a linear combination of $G^{(k)}_{\rho, \shskip  \mu + d} (z) G^{(l)}_{\rho, \shskip \mu - d} (\widebar z)$, with $k, l = 1, 2$, namely, 
	$$ f(z) = \underset{k,\shskip  l = 1,\shskip  2}{\sum \sum} \ c_{k l}\shskip  G^{(k)}_{\rho, \shskip  \mu + d} (z) G^{(l)}_{\rho, \shskip \mu - d} (\widebar z),$$
	for all $z$ away from the branch cut from $0$ to $1$. 
	
	In view of \eqref{2eq: G1 and G2} and \eqref{2eq: defn of G(1), G(2)}, it is clear that  $G_{\rho, \shskip \mu+d}^{(1)} (z)  G_{\rho, \shskip \mu-d}^{(1)} (\widebar z) $ and $  G_{\rho, \shskip \mu+d}^{(2)} (z)  G_{\rho, \shskip \mu-d}^{(2)} (\widebar z)$  are both single-valued real analytic functions when $|z| > 1$, and so are the quotient of $ G^{(1)}_{\rho, \shskip  \mu + d} (z) G^{(2)}_{\rho, \shskip \mu - d} (\widebar z)$ and $ (z/|z|)^{  -  \mu}$ and that of $ G^{(2)}_{\rho, \shskip  \mu + d} (z) G^{(1)}_{\rho, \shskip \mu - d} (\widebar z)$ and $ (z/|z|)^{   \mu}$. Since $   \mu $ is not an integer, the functions $ (z/|z|)^{  -  \mu}$ and  $ (z/|z|)^{   \mu}$ are not single-valued. Choosing $\arg z = 0, 2 \pi$, it follows from simple considerations that any nontrivial linear combination of $ G^{(1)}_{\rho, \shskip  \mu + d} (z) G^{(2)}_{\rho, \shskip \mu - d} (\widebar z) $ and $ G^{(2)}_{\rho, \shskip  \mu + d} (z) G^{(1)}_{\rho, \shskip \mu - d} (\widebar z) $ is not independent on $\arg z$ modulo $2 \pi$. So we must have $c_{12} = c_{21} = 0$. 
	
	Next, we need to prove that $c_{11} = c_1$ and $c_{22} = c_2$. For this, we deduce from  \eqref{2eq: G1 and G2} and \eqref{2eq: defn of G(1), G(2)} that
	\begin{align*}
	& G_{\rho, \shskip \mu+d}^{(1)} (z)  G_{\rho, \shskip \mu-d}^{(1)} (\widebar z) = |z|^{-  \rho -  \mu} (\widebar z/|z|)^{  d} + O \lp \big||z|^{-  \rho -   \mu - 1} \big| \rp, \\
	& G_{\rho, \shskip \mu+d}^{(2)} (z)  G_{\rho, \shskip \mu-d}^{(2)} (\widebar z) = |z|^{-  \rho +  \mu} ( z/|z|)^{ d} + O \lp \big||z|^{-   \rho +   \mu - 1} \big| \rp,
	\end{align*}
	as $|z| \ra \infty$. Since $|\Re \mu| < \bfrac 1 2$, the two error terms have order strictly lower than the least order of the two main terms. This forces $c_{11} = c_{1}$ and $c_{22} = c_{2}$ in order for $f (z)$ to have the prescribed asymptotic in \eqref{2eq: asymptoic f (z)}.
	
	It is now proven that the identity \eqref{2eq: identity f (z)} is valid except on the branch cut from $0$ to $1$. However $f (z)$ is continuous on the whole complex plane, the identity may be extended to the branch cut.
\end{proof}

We remark that the continuity of $f (z)$ is a very strong condition (it is not even clear now whether exists such a continuous solution $f (z)$ of the differential equations). It does not only force $c_{12} = c_{21} = 0$ in the proof above but also implies that the ratio $c_1 /c_2$ has to be unique as neither $  G_{\rho, \shskip \mu+d}^{(1)} (z)  G_{\rho, \shskip \mu-d}^{(1)} (\widebar z) $ nor $ G_{\rho, \shskip \mu+d}^{(2)} (z)  G_{\rho, \shskip \mu-d}^{(2)} (\widebar z)$ is single-valued in $|z| < 1$. Indeed, the following proposition demonstrates that the ratio must be equal to $	4^{-\mu}  C_{ \mu,\shskip  d}^{\shskip \rho}/ 4^{\mu}  C_{ - \mu,\shskip - d}^{\shskip \rho}$. This proposition however is not required in our proof of Theorem \ref{thm: general W-S formula}.

\begin{prop}\label{prop: hypergeometric |z| < 1}
	 Let $C_{ \mu,\shskip  d}^{\shskip \rho}$ be defined by {\rm\eqref{0eq: defn of C}}. Let $G_{\rho, \shskip \varnu }^{(1)} (z)$ and $G_{\rho, \shskip \varnu }^{(2)} (z)$ be given by {\rm\eqref{1eq: defn of F(1,2)}} and {\rm\eqref{2eq: G(1,2) and F(1,2)}}. Define
	\begin{equation}\label{1eq: defn E(1, 2)}
	E^{(1)}_{\rho, \shskip \varnu } (z) =  {_2F_1}\hskip -2 pt \lp \hskip - 0.5 pt \frac {\rho \hskip - 0.5 pt + \hskip - 0.5 pt\varnu } 2, \frac {\rho\hskip - 0.5 pt - \hskip - 0.5 pt\varnu  }2; \frac 1 2 ; z \hskip - 0.5 pt \rp  \hskip - 2 pt, \ E^{(2)}_{\rho, \shskip \varnu } (z) = {_2F_1} \hskip -2 pt \lp \hskip - 0.5 pt \frac {\rho\hskip - 0.5 pt + \hskip - 0.5 pt\varnu  \hskip - 0.5 pt + \hskip - 0.5 pt 1} 2, \frac {\rho\hskip - 0.5 pt - \hskip - 0.5 pt\varnu \hskip - 0.5 pt + \hskip - 0.5 pt 1}2; \frac 3 2 ; z \hskip - 0.5 pt \rp \hskip - 2 pt .
	\end{equation}
	\footnote{Note that, according to \cite[\S 2.2]{MO-Formulas}, $E^{(1)}_{\rho, \shskip \varnu } (z)$ and $\sqrt z E^{(2)}_{\rho, \shskip \varnu } (z)$ form a system of linearly independent solutions of the hypergeometric equation with $a, b, c$ as in \eqref{2eq: a, b, c} in the vicinity of the singular point $z = 0$.} We have
	\begin{equation}\label{3eq: hypergeometric, |z|<1}
	\begin{split}
& \quad \	4^{-\mu}  C_{ \mu,\shskip  d}^{\shskip \rho}  G_{\rho, \shskip \mu+d}^{(1)} (z)  G_{\rho, \shskip \mu-d}^{(1)}  (\widebar z)  +  4^{\mu}  C_{ - \mu,\shskip - d}^{\shskip \rho} G_{\rho, \shskip \mu+d}^{(2)} (z)  G_{\rho, \shskip \mu-d}^{(2)} (\widebar z)   \\
	& = B_{ \rho, \shskip \mu,\shskip  d} A_{ \rho, \shskip \mu,\shskip  d}^{(1)}  E_{\rho, \shskip \mu+d}^{(1)} (z)  E_{\rho, \shskip \mu-d}^{(1)} (\widebar z) + B_{ \rho, \shskip \mu,\shskip  d+1} A_{ \rho, \shskip \mu,\shskip  d}^{(2)} \cdot 4 |z| E_{\rho, \shskip \mu+d}^{(2)} (z)  E_{\rho, \shskip \mu-d}^{(2)} (\widebar z),
	\end{split}
	\end{equation}
	for $\arg z \in (0, 2 \pi)$,
	with 
	\begin{align}
	B_{ \rho, \shskip \mu,\shskip  d} = \frac {   (-)^{d}  \cos (\pi \mu) - \cos (\pi   \rho) } {4 \pi^{ 2 \rho+2}  } ,
	\end{align}
	\begin{equation}
	\begin{split}
	A_{ \rho, \shskip \mu,\shskip  d}^{(1)} =  \Gamma \lp \frac { \rho +   \mu+d } 2 \rp \Gamma \lp \frac { \rho +   \mu-d } 2 \rp \Gamma \lp \frac { \rho - \mu-d } 2 \rp \Gamma \lp \frac { \rho -  \mu+d } 2 \rp \hskip -1 pt,
	\end{split}
	\end{equation}
	and similarly
	\begin{equation}
	\begin{split}
	A_{ \rho, \shskip \mu,\shskip  d}^{(2)} = \Gamma \hskip -1 pt \lp \frac {1 \hskip -0.5 pt + \hskip -0.5 pt \rho \hskip -0.5 pt +  \hskip -0.5 pt \mu \hskip -0.5 pt + \hskip -0.5 pt d } 2 \rp \Gamma \hskip -1 pt \lp \frac {1 \hskip -0.5 pt + \hskip -0.5 pt \rho \hskip -0.5 pt +  \hskip -0.5 pt \mu \hskip -0.5 pt - \hskip -0.5 pt d } 2 \rp \Gamma \hskip -1 pt \lp \frac {1 \hskip -0.5 pt + \hskip -0.5 pt \rho \hskip -0.5 pt -  \hskip -0.5 pt \mu \hskip -0.5 pt - \hskip -0.5 pt d } 2 \rp \Gamma \hskip -1 pt \lp \frac {1 \hskip -0.5 pt + \hskip -0.5 pt \rho \hskip -0.5 pt -  \hskip -0.5 pt \mu \hskip -0.5 pt + \hskip -0.5 pt d } 2 \rp \hskip -1 pt .
	\end{split}
	\end{equation}
	Moreover, for $\Re \rho < \bfrac 1 2$, the combinations on both sides of \eqref{3eq: hypergeometric, |z|<1} give rise to a continuous function of $z$ on the whole complex plane, real-analytic except at $z = 1$.
\end{prop}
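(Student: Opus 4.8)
The plan is to read \eqref{3eq: hypergeometric, |z|<1} as a two-variable connection formula. Both sides are solutions of the same pair of hypergeometric equations $\nabla_{\rho,\, \mu+d}\, \tw = 0$ (in $z$) and $\ov\nabla_{\rho,\, \mu-d}\, \tw = 0$ (in $\widebar z$) of \eqref{2eq: nabla, 1}--\eqref{2eq: nabla, 2}: the left side is expressed in the basis of solutions at $z = \infty$ (the $G^{(k)}$'s of \eqref{2eq: defn of G(1), G(2)}), while the right side is expressed in the basis of solutions at $z = 0$, recalling from \eqref{1eq: defn E(1, 2)} that $E^{(1)}_{\rho,\, \varnu}(z)$ and $\sqrt z\, E^{(2)}_{\rho,\, \varnu}(z)$ span the local solution space at the origin. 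Since each factor $E^{(1)}_{\rho,\, \varnu}(z)$, $E^{(2)}_{\rho,\, \varnu}(z)$ is single-valued analytic on $|z| < 1$ and the square roots enter only through the single-valued combination $\sqrt z\, \sqrt{\widebar z} = |z|$, the right side of \eqref{3eq: hypergeometric, |z|<1} is manifestly single-valued and continuous off $z = 1$; the substance of the proof is to identify it with the left side, thereby transporting this regularity to the $G$-side.

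First I would record the one-variable connection between the two bases. Applying the transformation formula \eqref{2eq: transformation z - 1/z} with $(a, b, c) = \lp \bfrac {\rho + \varnu} 2, \bfrac {\rho - \varnu} 2, \bfrac 1 2 \rp$, using \eqref{2eq: G1 and G2} and \eqref{2eq: defn of G(1), G(2)}, and simplifying $(-z)^{-a} = e^{\pi i a} z^{-a}$ on the range $\arg z \in (0, 2\pi)$ prescribed in the statement, one obtains
\begin{equation*}
	E^{(1)}_{\rho,\, \varnu} (z) = \frac {\sqrt \pi \, \Gamma (-\varnu) \, e^{\frac 1 2 \pi i (\rho + \varnu)}} {\Gamma \lp \frac {\rho - \varnu} 2 \rp \Gamma \lp \frac {1 - \rho - \varnu} 2 \rp}\, G^{(1)}_{\rho,\, \varnu} (z) + \frac {\sqrt \pi \, \Gamma (\varnu) \, e^{\frac 1 2 \pi i (\rho - \varnu)}} {\Gamma \lp \frac {\rho + \varnu} 2 \rp \Gamma \lp \frac {1 - \rho + \varnu} 2 \rp}\, G^{(2)}_{\rho,\, \varnu} (z),
\end{equation*}
together with a companion identity for $\sqrt z\, E^{(2)}_{\rho,\, \varnu}(z)$ obtained from \eqref{2eq: transformation z - 1/z} with $c = \bfrac 3 2$ in place of $c = \bfrac 1 2$ and the gamma arguments shifted by $\bfrac 1 2$ accordingly. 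These two identities are the rows of the constant change-of-basis matrix carrying the local frame at $0$ to the local frame at $\infty$ for the hypergeometric equation with $a, b, c$ as in \eqref{2eq: a, b, c}.

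Next I would substitute these expansions into the right side of \eqref{3eq: hypergeometric, |z|<1}, taking $\varnu = \mu + d$ in the $z$-variable and $\varnu = \mu - d$ in the $\widebar z$-variable and writing $|z| = \sqrt z\, \sqrt{\widebar z}$, and then expand the two products to collect the four terms $G^{(k)}_{\rho,\, \mu+d}(z)\, G^{(l)}_{\rho,\, \mu-d}(\widebar z)$ with $k, l \in \{1, 2\}$. It then remains to verify, purely by manipulation of gamma factors, that the coefficients of the two mixed products $G^{(1)} G^{(2)}$ and $G^{(2)} G^{(1)}$ vanish, while the coefficients of the diagonal products $G^{(1)} G^{(1)}$ and $G^{(2)} G^{(2)}$ equal $4^{-\mu} C_{\mu,\, d}^{\rho}$ and $4^{\mu} C_{-\mu,\, -d}^{\rho}$ respectively, with $C_{\mu,\, d}^{\rho}$ as in \eqref{0eq: defn of C}; comparing with the left side of \eqref{3eq: hypergeometric, |z|<1} this proves the identity. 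The reflection and duplication formulas convert the products of connection coefficients into the stated constants $B_{\rho,\, \mu,\, d}$, $A_{\rho,\, \mu,\, d}^{(1)}$ and $A_{\rho,\, \mu,\, d}^{(2)}$, the factor $(-)^{d} \cos (\pi \mu) - \cos (\pi \rho)$ in $B_{\rho,\, \mu,\, d}$ emerging from a combination of sines and cosines.

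I expect the main obstacle to be precisely this gamma-function bookkeeping, and in particular the cancellation of the two mixed terms. Each mixed coefficient receives one contribution from the $E^{(1)} E^{(1)}$ product and one from the $|z|\, E^{(2)} E^{(2)}$ product, and these must cancel exactly; the cancellation is forced by the shift $d \mapsto d + 1$ distinguishing $B_{\rho,\, \mu,\, d+1}$ from $B_{\rho,\, \mu,\, d}$ in the second term of the right side, together with the half-integer shifts in $A_{\rho,\, \mu,\, d}^{(2)}$ versus $A_{\rho,\, \mu,\, d}^{(1)}$, and tracking the phases $e^{\frac 1 2 \pi i (\rho \pm \varnu)}$ through both variables is the delicate part. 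Once the four coefficients are pinned down, the continuity and real-analyticity claimed in the statement follow at once from the explicit form of the right side, built from Gauss series convergent on $|z| \leqslant 1$ (using $\Re \rho < \bfrac 1 2$) together with the single-valued factor $|z|$ and analytic continuation, and through the identity the same regularity is inherited by the left side. Alternatively, one could feed the large $|z|$ asymptotics of the right side into Lemma \ref{lem: hypergeometric equation}, but extracting those asymptotics requires the very same connection formula \eqref{2eq: transformation z - 1/z}, so the direct route above is more economical.
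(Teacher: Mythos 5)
Your strategy coincides with the paper's own proof: it too reads \eqref{3eq: hypergeometric, |z|<1} as a connection identity, applies the transformation formula \eqref{2eq: transformation z - 1/z} with $(a,b,c)$ as in \eqref{2eq: a, b, c}, expands the four resulting products, kills the two mixed terms by Euler's reflection formula after duplication-formula bookkeeping, and obtains the continuity statement by gluing a function real-analytic on $|z|>1$ to one real-analytic on $|z|<1$. The only structural difference is the direction of the connection: the paper writes $G^{(1)}_{\rho,\varnu}$, $G^{(2)}_{\rho,\varnu}$ in terms of $E^{(1)}_{\rho,\varnu}$, $\sqrt z\,E^{(2)}_{\rho,\varnu}$ and substitutes into the left-hand side, thereby \emph{deriving} the constants $B_{\rho,\mu,d}$, $A^{(1)}_{\rho,\mu,d}$, $A^{(2)}_{\rho,\mu,d}$, whereas you invert the matrix and \emph{verify} the prescribed constants; that difference is immaterial.

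There is, however, one concrete gap, and it is exactly the point the paper singles out as subtle. Your formula with the simplification $(-z)^{-a}=e^{\pi i a}z^{-a}$ is correct for the factor in $z$ with $\arg z\in(0,2\pi)$, but you then propose to substitute ``these expansions'' into both factors, taking $\varnu=\mu-d$ in the $\widebar z$-variable. For the products $G^{(k)}_{\rho,\mu+d}(z)\,G^{(k)}_{\rho,\mu-d}(\widebar z)$ on the left of \eqref{3eq: hypergeometric, |z|<1} to be the single-valued functions the proposition concerns (the same ones appearing in Lemma \ref{lem: hypergeometric equation} and in the Conclusion of \S \ref{sec: Proof of Theorem 1}), the branches must be coordinated: $\arg\widebar z=-\arg z\in(-2\pi,0)$. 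The hypothesis $|\arg(-\widebar z)|<\pi$ of \eqref{2eq: transformation z - 1/z} then forces $-\widebar z=e^{+i\pi}\widebar z$, hence $(-\widebar z)^{-a}=e^{-\pi i a}\,\widebar z^{\,-a}$; so every phase $e^{\frac12\pi i(\rho\pm\varnu)}$ in your one-variable formula must be replaced by $e^{-\frac12\pi i(\rho\pm\varnu)}$ when it is used in the $\widebar z$-variable. This sign flip is not cosmetic: with conjugated phases the diagonal products carry factors like $e^{\frac12\pi i(\rho+\mu+d)}e^{-\frac12\pi i(\rho+\mu-d)}=(-1)^d$, consistent with the factor $(-)^d\cos(\pi\mu)-\cos(\pi\rho)$ in $B_{\rho,\mu,d}$ and with $4^{\mp\mu}C^{\rho}_{\pm\mu,\pm d}$, and the mixed coefficients assemble into the combination of $\sin(\pi(\rho+\mu))$ and $\sin(\pi(\rho-\mu))$ that vanishes by reflection; with equal phases in both variables one instead gets spurious factors $e^{\pi i(\rho+\mu)}$ in the diagonal terms and no cancellation in the mixed terms, so the verification fails. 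You flagged ``tracking the phases through both variables'' as the delicate part, but as written the proposal uses the wrong branch in the second variable, and this must be repaired before the gamma-function bookkeeping can go through.
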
 

\begin{proof}
By the transformation law \eqref{2eq: transformation z - 1/z}, we have
	\begin{align*}
	G^{(1)}_{\rho, \shskip \varnu } (z)  = \frac {\sqrt \pi \shskip \Gamma (1+\varnu )   e^{-\frac 1 2 \pi i (\rho+\varnu )} E^{(1)}_{\rho, \shskip \varnu } (z) } {\Gamma ((1+\rho+\varnu )/ 2) \Gamma ((2-\rho+\varnu ) / 2)}  + \frac {2 \sqrt \pi \shskip \Gamma (1+\varnu )   e^{-\frac 1 2 \pi i (\rho+\varnu +1)} \sqrt z E^{(2)}_{\rho, \shskip \varnu } (z) } {\Gamma ((\rho+\varnu )/ 2) \Gamma ((1-\rho+\varnu ) / 2)  } , \\
	G^{(2)}_{\rho, \shskip \varnu } (z)  = \frac {\sqrt \pi \shskip \Gamma (1-\varnu )   e^{-\frac 1 2 \pi i (\rho-\varnu )} E^{(1)}_{\rho, \shskip \varnu } (z) } {\Gamma ((1+\rho-\varnu )/ 2) \Gamma ((2-\rho-\varnu ) / 2)}  + \frac {2 \sqrt \pi \shskip \Gamma (1-\varnu )   e^{-\frac 1 2 \pi i (\rho-\varnu +1)} \sqrt z E^{(2)}_{\rho, \shskip \varnu } (z) } {\Gamma ((\rho-\varnu )/ 2) \Gamma ((1-\rho-\varnu ) / 2) } ,
	\end{align*}
	for $\arg z \in (0, 2 \pi)$. For $ G^{(1)}_{\rho, \shskip \varnu } (\widebar z) $  and $G^{(1)}_{\rho, \shskip \varnu } (\widebar z)$, the formulae are similar but in a subtle way---the factors like $  e^{-\frac 1 2 \pi i (\rho + \varnu )} $ should be replaced by  $  e^{\frac 1 2 \pi i (\rho+\varnu )} $. This is because,  in view of the conditions in  \eqref{2eq: transformation z - 1/z}, we need to let  $|\arg ( - 1/\widebar z)| < \pi $ and hence $-1 = e^{-i \pi}$. 
	
	Applying  these to the left hand side of \eqref{3eq: hypergeometric, |z|<1}.
	By  the duplication formula for the gamma function, we find that,  up to sign,  the coefficients of $  E^{(1)}_{\rho, \shskip \mu+d} (z) \cdot \sqrt {\widebar z} E^{(2)}_{\rho, \shskip \mu-d} (\widebar z) $ and $  \sqrt {  z} E^{(2)}_{\rho, \shskip \mu+d} (z) \cdot  E^{(1)}_{\rho, \shskip \mu-d} (\widebar z) $ are both equal to  
	\begin{align*}
	\frac {  i^{\shskip 2d-1} } {2 \pi ^{ 2 \rho} \sin (\pi \mu)}    \bigg( &
	\frac {   \sin (\pi ( \rho  +  \mu)) \Gamma ((\rho +   \mu + d)/2 )  \Gamma ((1 +\rho +   \mu - d)/2 )  } 
	{    \Gamma ((2-\rho+\mu+d) / 2) \Gamma ((1-\rho+\mu-d) / 2) }  \\
	& - \frac { \sin (\pi ( \rho  - \mu)) \Gamma ((\rho - \mu - d)/2 )  \Gamma ((1 +\rho -   \mu + d)/2 )  } 
{    \Gamma ((2-\rho-\mu-d) / 2) \Gamma ((1-\rho-\mu+d) / 2) } \bigg),
	\end{align*}
which in turn is equal to zero by Euler's reflection formula for the gamma function. We obtain \eqref{3eq: hypergeometric, |z|<1} by calculating the coefficients of $  E^{(1)}_{\rho, \shskip \varnu } (z)       E^{(1)}_{\rho, \shskip \varnu } (\widebar z) $ and $    |z| E^{(2)}_{\rho, \shskip \varnu } (z)   E^{(2)}_{\rho, \shskip \varnu } (\widebar z) $ in a similar way. 

It was noted before that the left hand side of \eqref{3eq: hypergeometric, |z|<1} gives rise to a single-valued continuous function on $|z| \geqslant 1$, real-analytic in the interior $|z| > 1$, provided that $\Re \rho < \bfrac 1 2$.  It is also clear that the same is true for the right hand side of \eqref{3eq: hypergeometric, |z|<1} on  $|z| \leqslant 1$. Thus \eqref{3eq: hypergeometric, |z|<1} is valid and defines a continuous function (real-analytic except at $z = 1$) on the whole complex plane if $\Re \rho < \bfrac 1 2$. 
\end{proof}

	\section{Proof of Theorem \ref{thm: general W-S formula}} \label{sec: Proof of Theorem 1}

	Recall that $\bfJ_{ \mu,\shskip  d} (z) $ and $ \boldsymbol {R}_{ \mu,\shskip  d} (z) $ were defined as in (\ref{0def: J mu m (z)}, \ref{0eq: defn of Bessel}) and (\ref{0eq: defn P mu m (z)},\ref{0eq: defn of R}) respectively. Let
		\begin{equation}\label{3eq: defn of F}
	\bfF_{\mu,\shskip  d }^{\shskip \rho} \big( y e^{i \theta} \big) =  \int_{0}^{2 \pi} \int_0^\infty \bfJ_{ \mu,\shskip  d }    \big(  x e^{i\phi} \big)  e (- 2 x y \cos (\phi + \theta) )  x^{2 \rho - 1}  d x \shskip d \phi,
	\end{equation}
	and
	\begin{equation}\label{3eq: defn of G}
	\boldsymbol {P}_{ \mu,\shskip  d}^{\shskip \rho} \big(y e^{i \theta}\big) =  \int_{0}^{2 \pi} \int_0^\infty \boldsymbol {R}_{ \mu,\shskip  d}   \big(  x e^{i\phi} \big)  e (- 2 x y \cos (\phi + \theta) )  x^{2 \rho - 1}  d x \shskip d \phi . 
	\end{equation} 
Since $ \bfJ_{ \mu,\shskip  d } (z) $ is an even function, the integral $ \bfF_{\mu,\shskip  d }^{\shskip \rho} \big(y e^{i \theta}\big) $ is also even in the sense that it is independent on $\theta$ modulo $\pi$.

	First of all, the convergence of  $\bfF_{\mu,\shskip  d }^{\shskip \rho} \big( y e^{i \theta} \big)$ and   $\bfP_{\mu,\shskip  d }^{\shskip \rho} \big( y e^{i \theta} \big)$ may be easily examined with the help of Lemma \ref{lem: integration by parts}, along with \eqref{2eq: bound for J mu m, weak, 2} and \eqref{1eq: J = W + W}-\eqref{1eq: derivatives of E}  for the former. 
	
	\begin{lem}\label{lem: convergence of F and G}
		Let the integrals $\bfF_{\mu,\shskip  d }^{\shskip \rho} \big( y e^{i \theta} \big)$ and   $\bfP_{\mu,\shskip  d }^{\shskip \rho} \big( y e^{i \theta} \big)$ be defined as above. 
		
		{\rm(1).}  $\bfF_{\mu,\shskip  d }^{\shskip \rho} \big( y e^{i \theta} \big)$ is absolutely convergent if $|\Re \mu | <   \Re \rho < \bfrac 1 2$, and, for $y > 2$, convergent if $|\Re \mu | <   \Re \rho < 1$,  uniformly for $\Re \mu$ and $\Re \rho$ in compact sets.
		
		{\rm(2).} $\bfP_{\mu,\shskip  d }^{\shskip \rho} \big( y e^{i \theta} \big)$ is convergent for $y > 0$ if $|\Re \mu | <   \Re \rho < \bfrac 1 2 - |\Re \mu |$, uniformly for $\Re \mu$ and $\Re \rho$ in compact sets.
		
\noindent		Moreover, both $\bfF_{\mu,\shskip  d }^{\shskip \rho} \big( y e^{i \theta} \big)$ and  $\bfP_{\mu,\shskip  d }^{\shskip \rho} \big( y e^{i \theta} \big)$ are continuous functions when $y$ is in the indicated ranges above.
	\end{lem}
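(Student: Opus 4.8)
The plan is to split each integral by a smooth partition of unity $1 = \chi_{0}(|z|) + \chi_{\infty}(|z|)$, with $\chi_0$ supported in $|z|\leqslant 2$ and $\chi_\infty$ in $|z|\geqslant 1$, into a contribution from a neighborhood of the origin and one from a neighborhood of infinity, and to treat these two regions by entirely different means. Near the origin both integrands are dominated by an explicit power of $x = |z|$, so convergence reduces to a one-dimensional power integral; near infinity the decay is too slow for absolute convergence in the full range, and the oscillation must be exploited through Lemma \ref{lem: integration by parts}.

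For $\bfF_{\mu,   d }^{  \rho}$ I would bound the near-origin part using \eqref{2eq: bound for J mu m, weak, 2} with $r=s=0$, which gives $\bfJ_{ \mu,   d} \big( x e^{i\phi} \big) \Lt x^{-2\lambdaup}$; the integrand is then $\Lt x^{2\Re\rho - 1 - 2\lambdaup}$, so the radial integral $\int_0 x^{2\Re\rho - 1 - 2\lambdaup}\,dx$ converges precisely when $\Re\rho > \lambdaup$, i.e. for $|\Re\mu| < \Re\rho$. For the near-infinity part I would insert \eqref{1eq: J = W + W}. The remainder $\boldsymbol{E}_N$ is $\Lt |z|^{-N-1}$ by \eqref{1eq: derivatives of E} and hence yields an absolutely convergent integral once $N$ is large. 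For the two main terms $e(\pm 4 \Re z)\boldsymbol{W}(\pm z)$, the crude bound $|\boldsymbol{W}| \Lt 1/|z|$ gives absolute convergence exactly when $\Re\rho < \bfrac 1 2$, which together with the origin condition produces the range $|\Re\mu| < \Re\rho < \bfrac 1 2$. To reach $\Re\rho < 1$ when $y > 2$ I would instead invoke Lemma \ref{lem: integration by parts}: the phase of $e(\pm 4\Re z)\,e(-2xy\cos(\phi+\theta))$ has the form $x(a\cos(\phi+\theta)+b\cos\phi)$ with $a=-2y$ and $b=\pm 4$, so the nondegeneracy hypothesis $|b|<|a|$ becomes $4 < 2y$, that is $y > 2$; the amplitude $\chi_\infty\boldsymbol{W}(\pm z)x^{2\rho-2}$ meets the derivative hypotheses of the lemma with $\gamma = \Re\rho - \bfrac 1 2$ by \eqref{1eq: derivatives of W}, and the constraint $\gamma < \bfrac 1 2$ is exactly $\Re\rho < 1$.

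For $\bfP_{\mu,   d }^{  \rho}$ the function $\boldsymbol{R}_{\mu,d}$ is the explicit combination \eqref{0eq: defn of R} of the monomials $P_{\pm\mu,\pm d}$, each of magnitude $x^{\pm 2\Re\mu}$ with elementary scaling derivative bounds. Near the origin the dominant term is $x^{-2|\Re\mu|}$, so the radial integral again converges iff $\Re\rho > |\Re\mu|$. Near infinity the dominant growth is $x^{2|\Re\mu|}$; since $\boldsymbol{R}_{\mu,d}$ carries no oscillation of its own, the only phase is $e(-2xy\cos(\phi+\theta))$, corresponding to $a=-2y$, $b=0$ in Lemma \ref{lem: integration by parts}, so $|b|<|a|$ holds for every $y>0$. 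Here the amplitude $\chi_\infty\boldsymbol{R}_{\mu,d}(z)x^{2\rho-2}$ (with $z=xe^{i\phi}$) satisfies the hypotheses with $\gamma=|\Re\mu|+\Re\rho$, and the requirement $\gamma<\bfrac 1 2$ gives the upper bound $\Re\rho < \bfrac 1 2-|\Re\mu|$. Combining the two regions yields the stated range $|\Re\mu|<\Re\rho<\bfrac 1 2-|\Re\mu|$.

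Finally, continuity in $(y,\theta)$ and uniformity of convergence for $\Re\mu,\Re\rho$ in compact subsets of the respective regions would follow by combining the two ingredients: on the origin side the integrals are uniformly absolutely convergent, since the exponent $2\Re\rho-1-2\lambdaup$ stays bounded away from $-1$ on a compact set, so dominated convergence applies; on the infinity side the continuity statement and the uniform estimate of Lemma \ref{lem: integration by parts} transfer directly, with $|a|-|b|$ equal to $2y-4$ for $\bfF_{\mu,   d }^{  \rho}$ and to $2y$ for $\bfP_{\mu,   d }^{  \rho}$. The main obstacle is the near-infinity analysis of $\bfF_{\mu,   d }^{  \rho}$ in the enlarged range $\bfrac 1 2 \leqslant\Re\rho<1$, where absolute convergence genuinely fails: everything hinges on correctly isolating the oscillatory factors $e(\pm 4\Re z)$ from the Hankel asymptotic \eqref{1eq: J = W + W} and verifying that the resulting nondegeneracy condition $|b|<|a|$ is precisely $y>2$, so that the integration-by-parts mechanism of Lemma \ref{lem: integration by parts} can be brought to bear.
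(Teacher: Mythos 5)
Your proposal is correct and follows essentially the same route as the paper's proof: the same partition of unity relative to $(0,2]\cup[1,\infty)$, the bound \eqref{2eq: bound for J mu m, weak, 2} (respectively the explicit monomial bounds on $\boldsymbol{R}_{\mu,d}$) near the origin, and near infinity the decomposition \eqref{1eq: J = W + W} combined with Lemma \ref{lem: integration by parts} with exactly the same parameters $a=-2y$, $b=\pm 4$ (resp.\ $b=0$) and $\gamma = \Re\rho - \bfrac 1 2$ (resp.\ $\gamma = \Re\rho + |\Re\mu|$). Your explicit remark that the hypothesis $|b|<|a|$ is precisely the source of the condition $y>2$ in the extended range $\Re\rho<1$ is a point the paper leaves implicit, but the argument is the same.
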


\begin{proof}
	We partition the integrals  $\bfF_{\mu,\shskip  d }^{\shskip \rho} \big( y e^{i \theta} \big)$ and   $\bfP_{\mu,\shskip  d }^{\shskip \rho} \big( y e^{i \theta} \big)$ according to a partition of unity $(1- \tw(x)) + \tw(x) \equiv 1$  for $ \lp 0 , \infty \rp =  (  0, 2 ] \cup [1, \infty)$,  say. 
	
	By \eqref{2eq: bound for J mu m, weak, 2}, it is obvious that the integral
	\begin{align*}
	\int_{0}^{2 \pi} \int_0^2 (1- \tw(x)) \bfJ_{ \mu,\shskip  d }    \big(  x e^{i\phi} \big)  e (- 2 x y \cos (\phi + \theta) )  x^{2 \rho - 1}  d x \shskip d \phi
	\end{align*}
	 is convergent if $ |\Re \mu | < \Re \rho $; the same statement is true if $ \bfJ_{ \mu,\shskip  d }  $ is substituted by $ \bfR_{ \mu,\shskip  d } $.
	 
	 Next, we consider the following integral
	 \begin{align*}
	 \int_{0}^{2 \pi} \int_{1}^{\infty}   \tw (x)  \bfJ_{ \mu,\shskip  d } \big(  x e^{i\phi} \big)       e (- 2 x y \cos (\phi + \theta) )  x^{2 \rho - 1}  d x \shskip d \phi. 
	 \end{align*}
Since $ \bfJ_{ \mu,\shskip  d } (z) = O (1/|z|) $ for $|z| \geqslant 1$, this integral is absolutely convergent provided that $\Re \rho < \bfrac 1 2$. According to \eqref{1eq: J = W + W}, we divide it into three similar integrals which contain $ \boldsymbol{W} \big( x e^{i\phi} \big)$, $\boldsymbol{W} \left( - x e^{i\phi} \right)$ and $\bfE_1 \big(  x e^{i\phi} \big) $ respectively. For the first two integrals, we apply Lemma \ref{lem: integration by parts} with $a = - 2 y$, $b= \pm 4$, $f (z) = \tw (|z|) \boldsymbol{W} (\pm z)  |z|^{2 \rho-2}$ and $ \gamma =  \Re \rho - \bfrac 1 2 $ (by \eqref{1eq: derivatives of W}), and it follows that these two integrals are convergent if $\Re \rho < 1$ ($ \gamma < \bfrac 1 2$). By \eqref{1eq: derivatives of E}, the third integral is absolutely convergent when  $\Re \rho < 1$. 
	 
	 Similarly, applying  Lemma \ref{lem: integration by parts} with $a = - 2 y$, $b= 0 $, $f (z) = \tw (|z|) \bfR_{ \mu,\shskip  d } (z)  |z|^{2 \rho-2}$ and $ \gamma =  \Re \rho + |\Re \mu | $, we infer that the integral 
	 \begin{align*}
	 \int_{0}^{2 \pi} \int_{1}^{\infty}   \tw (x)  \bfR_{ \mu,\shskip  d } \big(  x e^{i\phi} \big)       e (- 2 x y \cos (\phi + \theta) )  x^{2 \rho - 1}  d x \shskip d \phi
	 \end{align*}
	 is convergent provided that $ \Re \rho <  \bfrac 1 2 - |\Re \mu |$.
\end{proof}

	\subsection{Asymptotic of $\bfF_{\mu,\shskip  d }^{\shskip \rho} \big(y e^{i \theta}\big)$}\label{sec: asymptotic}
	
	In this section, we assume that  $ \lambdaup =  |\Re \mu | < \bfrac 1 4$, $\beta = \Re \rho$ and  $ \lambdaup < \beta < \bfrac 1 2 - \lambdaup$ so that   $\bfF_{\mu,\shskip  d }^{\shskip \rho} \big( y e^{i \theta} \big)$ and  $\bfP_{\mu,\shskip  d }^{\shskip \rho} \big( y e^{i \theta} \big)$ are both convergent. Moreover, let $\mu \neq 0$ for simplicity.
	
	Our first aim is to prove the following asymptotic of $\bfF_{\mu,\shskip  d }^{\shskip \rho}( y e^{i \theta} )$,
	\begin{equation}\label{3eq: asymptotic F (y)}
	\bfF_{\mu,\shskip  d }^{\shskip \rho} \big( y e^{i \theta} \big) \sim  C_{\mu, \shskip d}^{\shskip \rho}   y^{-  2 \mu - 2\rho} e^{- 2 i d \theta}  + C_{- \mu, \shskip - d}^{\shskip \rho}       { y^{\shskip  2  \mu - 2\rho} e^{2i d \theta}} , \hskip 10 pt y \ra \infty,
	\end{equation}
	in which $ C_{\mu, \shskip d}^{\shskip \rho} $ is defined by \eqref{0eq: defn of C} in Theorem \ref{thm: general W-S formula}. It is clear that \eqref{3eq: asymptotic F (y)} follows from the following two lemmas.
	\begin{lem}\label{lem: F sim G} 
		We have
		\begin{align}\label{3eq: F sim G}
		\bfF_{\mu,\shskip  d }^{\shskip \rho} \big( y e^{i \theta} \big) = \bfP_{\mu,\shskip  d }^{\shskip \rho} \big( y e^{i \theta} \big) + o \big( y^{- 2 \lambdaup - 2 \beta} \big), \hskip 10 pt y \ra \infty.
		\end{align}
	\end{lem}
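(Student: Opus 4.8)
The plan is to recognize the difference $\bfF_{\mu, d}^{\rho}-\bfP_{\mu, d}^{\rho}$ as a single oscillatory integral of the regularized kernel, and to estimate it by separating the radial variable into a piece near the origin, where the cancellation built into the regularization is indispensable, and a piece near infinity, where one exploits the oscillation of the phase. Since the two integrals share the same kernel $e(-2xy\cos(\phi+\theta))$ and weight $x^{2\rho-1}$, subtracting gives
\[
\bfF_{\mu, d}^{\rho}\big(y e^{i\theta}\big)-\bfP_{\mu, d}^{\rho}\big(y e^{i\theta}\big)=\int_{0}^{2\pi}\int_{0}^{\infty}\bfM_{\mu, d}\big(x e^{i\phi}\big)\,e\big(-2xy\cos(\phi+\theta)\big)\,x^{2\rho-1}\,dx\,d\phi,
\]
where $\bfM_{\mu, d}=\bfJ_{\mu, d}-\bfR_{\mu, d}$ is the regularized kernel of \eqref{0eq: defn M}. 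I would first record the two structural facts that drive everything. Near the origin the singular leading terms of $\bfJ_{\mu, d}$ and $\bfR_{\mu, d}$ cancel, so by \eqref{2eq: bound for J mu m, weak} together with its differentiated analogue $z^{r}\widebar z^{\,s}(\partial/\partial z)^{r}(\partial/\partial\widebar z)^{s}\bfM_{\mu, d}(z)\Lt|z|^{2-2\lambdaup}$ for $|z|\Lt1$ (obtained term-by-term from the defining series, exactly as in the derivation of \eqref{2eq: bound for J mu m, weak, 2}); while for $|z|\Gt1$ the summand $\bfR_{\mu, d}$ is a sum of pure monomials of size $\Lt|z|^{2\lambdaup}$ and $\bfJ_{\mu, d}$ has the oscillatory-decaying Hankel expansion \eqref{1eq: J = W + W}--\eqref{1eq: derivatives of E}. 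I then split the $x$-integral by a smooth partition of unity at $x$ of order one into a far piece and a near piece.

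For the far piece ($x\Gt1$) I keep the splitting $\bfM_{\mu, d}=\bfJ_{\mu, d}-\bfR_{\mu, d}$. Inserting \eqref{1eq: J = W + W}, the two oscillating factors combine with the kernel into phases of the form $x\big(-2y\cos(\phi+\theta)\pm4\cos\phi\big)$, so Lemma \ref{lem: integration by parts} applies with $a=-2y$, $b=\pm4$ (admissible since $|b|<|a|$ for $y>2$) and $\gamma=\beta-\bfrac{1}{2}$, using the bounds \eqref{1eq: derivatives of W}; the remainder $\bfE_{N}$ is controlled directly by \eqref{1eq: derivatives of E}. For the monomial part $\bfR_{\mu, d}$ the same lemma applies with $b=0$ and $\gamma=\lambdaup+\beta$, which satisfies $\gamma<\bfrac{1}{2}$ precisely because $\beta<\bfrac{1}{2}-\lambdaup$. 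As $|a|-|b|\Gt y$, each application yields a bound $\Lt_{M}y^{-2M}$ for every $M$, far smaller than $y^{-2\lambdaup-2\beta}$.

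For the near piece ($x\Lt1$) I rescale $x=w/y$, which fixes the frequency (the phase becomes $e(-2w\cos(\phi+\theta))$) and produces an overall factor $y^{-2\rho}$; here I use $\bfM_{\mu, d}$ as a whole rather than its splitting. On $w\Lt1$ the bound $\bfM_{\mu, d}(we^{i\phi}/y)\Lt(w/y)^{2-2\lambdaup}$ makes the integral absolutely convergent of size $\Lt y^{2\lambdaup-2}$, so this contributes $\Lt y^{2\lambdaup-2-2\beta}$. On $w\Gt1$, where $w$ is bounded away from $0$ but confined to $w\leqslant 2y$ by the near cutoff, I apply Lemma \ref{lem: integration by parts} to the rescaled amplitude, whose derivatives inherit the factor $y^{2\lambdaup-2}$ from $\bfM_{\mu, d}$, with fixed frequency $a=-2$, $b=0$; integrating by parts sufficiently many times reconcentrates the $w$-integral near $w=1$, where $\bfM_{\mu, d}$ is smallest, and again gives $\Lt y^{2\lambdaup-2-2\beta}$. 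Because $\lambdaup<\bfrac{1}{4}<\bfrac{1}{2}$ we have $2\lambdaup-2-2\beta<-2\lambdaup-2\beta$, so both contributions are $o\big(y^{-2\lambdaup-2\beta}\big)$, which is \eqref{3eq: F sim G}.

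The delicate point, and the whole reason for passing to $\bfM_{\mu, d}$, is the behaviour near the origin: there the oscillation $2xy$ degenerates as $x\to0$, so no decay can be extracted from the phase alone, and the individual kernels $\bfJ_{\mu, d}$ and $\bfR_{\mu, d}$ are each of singular size $|z|^{-2\lambdaup}$. Only their difference vanishes to order $|z|^{2-2\lambdaup}$, and it is exactly this gain of two powers, weighed against the constraint $\lambdaup<\bfrac{1}{4}$, that converts the naive size $y^{2\beta}$ of the near piece into the required $o\big(y^{-2\lambdaup-2\beta}\big)$. I would therefore present the improved derivative bounds for $\bfM_{\mu, d}$ with care, since the argument collapses if one uses only the weaker $|z|^{-2\lambdaup}$ bounds valid for $\bfJ_{\mu, d}$ and $\bfR_{\mu, d}$ separately.
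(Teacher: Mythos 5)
Your proposal runs on the same engine as the paper's proof---the regularized bound \eqref{2eq: bound for J mu m, weak}, the Hankel expansion \eqref{1eq: J = W + W}--\eqref{1eq: derivatives of E}, and Lemma \ref{lem: integration by parts}---but the decomposition is genuinely different. The paper cuts at $y^{-1/2}$, $2$, $1$: it uses the cancellation $\bfJ_{\mu, d} - \bfR_{\mu, d}$ only on $(0, 2y^{-1/2}]$, where a trivial bound gives $y^{\lambdaup - \beta - 1}$, and on the middle range $[y^{-1/2}, 2]$ it treats $\bfJ_{\mu, d}$ and $\bfR_{\mu, d}$ \emph{separately}, with the weaker bounds of size $|z|^{-2\lambdaup}$ and a single integration by parts. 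You instead keep $\bfM_{\mu, d}$ intact on the whole unit disc and cut at the oscillation scale $1/y$; this exploits the two-power gain on a larger region and yields the sharper bound $y^{2\lambdaup - 2 - 2\beta}$ for the near contribution (requiring only $\lambdaup < \bfrac 1 2$ at this step, versus $\lambdaup < \bfrac 1 4$ in the paper's bookkeeping). For this you correctly note that you need scale-invariant derivative bounds for $\bfM_{\mu, d}$ itself with exponent $2 - 2\lambdaup$, which the paper never states (it only states \eqref{2eq: bound for J mu m, weak, 2} for $\bfJ_{\mu, d}$) but which do follow term-by-term from the series. Both routes deliver $o\big(y^{-2\lambdaup - 2\beta}\big)$; the paper's choice of $y^{-1/2}$ is precisely the balance point that lets every application of Lemma \ref{lem: integration by parts} satisfy its stated hypothesis $\gamma < \bfrac 1 2$.

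That last point is where your write-up has a defect, though a fixable one. On the region $1/y \Lt x \Lt 1$ the amplitude $\bfM_{\mu, d}\big(x e^{i\phi}\big) x^{2\rho - 2}$ satisfies $x^{r} (\partial/\partial x)^{r} (\partial/\partial \phi)^{s} (\cdot) \Lt x^{2\beta - 2\lambdaup}$, that is $x^{2\gamma - 2}$ with $\gamma = 1 + \beta - \lambdaup > 1$; equivalently, in your rescaled variable the amplitude \emph{grows} like $w^{2\beta - 2\lambdaup}$. So Lemma \ref{lem: integration by parts} does not apply as stated, since $\gamma < \bfrac 1 2$ fails. The estimate you want is still true: because the support is compact, the lemma's proof (repeated application of the self-adjoint operator $\mathrm{D}$) goes through for any $\gamma$ once $M > \gamma$ and gives exactly your $y^{2\lambdaup - 2 - 2\beta}$; alternatively, one can absorb the growth via $w^{2\beta - 2\lambdaup} \leqslant (2y)^{2\beta - 2\lambdaup - 2\gamma + 2} w^{2\gamma - 2}$ on $w \leqslant 2y$ and then invoke the lemma verbatim with any $\gamma \in (\lambdaup + \beta, \bfrac 1 2)$, which is nonempty since $\beta < \bfrac 1 2 - \lambdaup$. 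Either patch must be said explicitly. A second, smaller slip: the Hankel remainder $\bfE_N$ in the far piece cannot be ``controlled directly'' by \eqref{1eq: derivatives of E}---a trivial bound gives $O(1)$ with no decay in $y$ at all, no matter how large $N$ is---it too must be fed into Lemma \ref{lem: integration by parts} (with $b = 0$, as the paper does for $\bfE_1$) to produce a negative power of $y$.
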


	\begin{lem}\label{lem: formula for G} 
		We have
		\begin{align}\label{3eq: G = y}
		\bfP_{ \mu,\shskip  d}^{\shskip \rho} \big(y e^{i \theta}\big)  =   C_{\mu, \shskip d}^{\shskip \rho}   y^{- 2 \mu - 2\rho} e^{- 2 i d \theta}  + C_{- \mu, \shskip - d}^{\shskip \rho}       { y^{\shskip  2 \mu - 2\rho} e^{2i d \theta}}  .
		\end{align} 
	\end{lem}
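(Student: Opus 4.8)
The plan is to substitute the explicit homogeneous form of $\boldsymbol{R}_{\mu,d}$ and thereby reduce the evaluation of $\bfP_{\mu,d}^{\rho}$ to two elementary ingredients: an angular Bessel integral and a radial Weber--Sonine--Schafheitlin integral. Writing \eqref{0eq: defn P mu m (z)} with $z = x e^{i\phi}$, one has
$$P_{\mu,d}(4\pi x e^{i\phi}) = \frac{(2\pi)^{2\mu}}{\Gamma(\mu+d+1)\Gamma(\mu-d+1)}\, x^{2\mu} e^{2id\phi},$$
so by \eqref{0eq: defn of R} the integrand defining $\bfP_{\mu,d}^{\rho}$ is a fixed linear combination of the two pure powers $x^{\pm 2\mu} e^{\pm 2id\phi}$. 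It therefore suffices to evaluate, for the single term coming from $P_{\mu,d}$, the integral $S_{\mu,d} = \int_{0}^{2\pi}\!\int_0^\infty P_{\mu,d}(4\pi x e^{i\phi})\, e(-2xy\cos(\phi+\theta))\, x^{2\rho-1}\, dx\, d\phi$; the contribution of $P_{-\mu,-d}$ is then recovered by the substitution $(\mu,d)\mapsto(-\mu,-d)$. Since $|\Re\mu|<\tfrac14$, the parameter $\mu$ is never an integer here, so $\sin(\pi\mu)\neq 0$ and everything stays in the generic case.

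Carrying out the angular integration first, I would shift $\phi\mapsto\phi-\theta$ and invoke Bessel's integral representation \eqref{2eq: integral repn of J} with order $m=-2d$ and argument $-4\pi xy$; using $J_{-2d}=J_{2d}$ and $J_{2d}(-w)=J_{2d}(w)$ for the even integer order $2d$ this gives
$$\int_{0}^{2\pi} e^{2id\phi}\, e(-2xy\cos(\phi+\theta))\, d\phi = 2\pi(-1)^{d}\, e^{-2id\theta}\, J_{2d}(4\pi xy).$$
The remaining radial integral $\int_0^\infty x^{2\mu+2\rho-1}J_{2d}(4\pi xy)\,dx$ is, after the scaling $u=4\pi xy$, a special case of \eqref{1eq: WSS formula}, equal to $(4\pi y)^{-2\mu-2\rho}\,\Gamma(\rho+\mu+d)\big/\big(2^{1-2\mu-2\rho}\,\Gamma(1+d-\rho-\mu)\big)$; the convergence conditions hold in our range because $\Re(\mu+\rho)>0$ controls the behaviour at $0$ and $\Re\rho<\tfrac12-|\Re\mu|$ the oscillatory tail. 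To match the constant $C_{\mu,d}^{\rho}$ of \eqref{0eq: defn of C} I would then apply Euler's reflection formula to rewrite $1/\Gamma(1+d-\rho-\mu)$ as $(-1)^{d}\sin(\pi(\rho+\mu))\,\Gamma(\rho+\mu-d)/\pi$; the two factors $(-1)^d$ cancel, the symmetric product $\Gamma(\rho+\mu+d)\Gamma(\rho+\mu-d)$ emerges, and after collecting the powers of $2\pi$ one finds $S_{\mu,d} = -\sin(\pi\mu)\, C_{\mu,d}^{\rho}\, y^{-2\mu-2\rho} e^{-2id\theta}$. Because $P_{\mu,d}$ enters \eqref{0eq: defn of R} with the coefficient $-1/\sin(\pi\mu)$, this term contributes exactly $C_{\mu,d}^{\rho}\, y^{-2\mu-2\rho} e^{-2id\theta}$ to $\bfP_{\mu,d}^{\rho}$; the substitution $(\mu,d)\mapsto(-\mu,-d)$ produces the companion $C_{-\mu,-d}^{\rho}\, y^{2\mu-2\rho} e^{2id\theta}$, and adding the two yields \eqref{3eq: G = y}.

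The one genuinely delicate point, and the step I expect to be the main obstacle, is the interchange of the $\phi$- and $x$-integrations. In the range $|\Re\mu|<\Re\rho<\tfrac12-|\Re\mu|$ furnished by Lemma \ref{lem: convergence of F and G} the integral $\bfP_{\mu,d}^{\rho}$ is only conditionally convergent (the integrand has no region of absolute convergence, since the power growth of $\boldsymbol{R}_{\mu,d}$ at infinity cannot be reconciled with absolute integrability at $0$), so Fubini does not apply directly. I would resolve this by Abel regularization: insert a factor $e^{-\epsilon x}$ with $\epsilon>0$, which makes the double integral absolutely convergent, legitimately perform the interchange and the two evaluations above, and then let $\epsilon\to0^{+}$, the limit of the regularized radial integral agreeing with the conditionally convergent Weber--Sonine--Schafheitlin value. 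Equivalently, one may first establish \eqref{3eq: G = y} for $\Re\rho$ in a subregion where \eqref{0eq: Fourier of x} applies term by term and then extend it to the full range by analytic continuation in $\rho$, both sides being holomorphic there.
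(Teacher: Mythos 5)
Your proposal is correct and follows essentially the same route as the paper: after reducing $\bfP_{\mu, d}^{\rho}$ to integrals of the pure powers $x^{\pm 2\mu}e^{\pm 2 i d \phi}$, the paper packages your two ingredients---the angular integral evaluated by Bessel's integral representation \eqref{2eq: integral repn of J} and the radial integral evaluated by the Weber--Sonine--Schafheitlin formula \eqref{1eq: WSS formula}, followed by Euler's reflection formula---into Lemma \ref{lem: integral of P}, applied with $m = \pm 2d$ and $\varnu = \rho \pm \mu$. The only divergence is in handling the interchange of the $x$- and $\phi$-integrations: where you invoke Abel regularization with $e^{-\epsilon x}$, the paper instead records in Lemma \ref{lem: integral of P} that the integral converges as a double integral precisely for $0 < \Re \varnu  < \frac{1}{2}$, which is the range in force here (since $|\Re\mu| < \Re\rho < \frac{1}{2} - |\Re\mu|$ gives $0 < \Re(\rho\pm\mu) < \frac{1}{2}$), so that either iterated order yields the same value.
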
 
	
	

\begin{proof}[Proof of Lemma \ref{lem: F sim G}]
Let  $y > 2$ be sufficiently large.  All the implied constants in our computation will only depend on  $\beta$, $\lambdaup$ and $ d $. 
	
	We split $\bfF_{\mu,\shskip  d }^{\shskip \rho} \big(y e^{i \theta}\big)   - \bfP_{\mu,\shskip  d }^{\shskip \rho} \big(y e^{i \theta}\big)  $ as the sum
	\begin{align*}
	\bfF_{\mu,\shskip  d }^{\shskip \rho}  \big(y e^{i \theta}\big)   - \ & \bfP_{\mu,\shskip  d }^{\shskip \rho} \big(y e^{i \theta}\big)   =  \bfD_{\mu,\shskip  d }^{\shskip \rho}  \big(y e^{i \theta}\big)  + \acute{\bfE}_{\mu,\shskip  d }^{\shskip \rho}  \big(y e^{i \theta}\big)  + \grave{\bfE}_{\mu,\shskip  d }^{\shskip \rho}  \big(y e^{i \theta}\big)  - \hat{\bfE}_{\mu,\shskip  d }^{\shskip \rho} \big(y e^{i \theta}\big)  \\
	 =\ & \int_{0}^{2 \pi} \int_0^{2y^{  - \frac 1 2}}  u (x) \big( \bfJ_{ \mu,\shskip  d } \big(  x e^{i\phi} \big) -  \bfR_{ \mu,\shskip  d } \big(  x e^{i\phi} \big) \big)      e (- 2 x y \cos (\phi + \theta) )  x^{2 \rho - 1}  d x \shskip d \phi\\
	& + \int_{0}^{2 \pi} \int_{y^{ - \frac 1 2}}^{2} \tv (x)  \bfJ_{ \mu,\shskip  d } \big(  x e^{i\phi} \big)       e (- 2 x y \cos (\phi + \theta) )  x^{2 \rho - 1}  d x \shskip d \phi\\
	& +  \int_{0}^{2 \pi} \int_{1}^{\infty}   \tw (x)  \bfJ_{ \mu,\shskip  d } \big(  x e^{i\phi} \big)       e (- 2 x y \cos (\phi + \theta) )  x^{2 \rho - 1}  d x \shskip d \phi\\
	& -  \int_{0}^{2 \pi} \int_{y^{- \frac 1 2}}^{\infty}   (\tv (x) + \tw (x) ) \bfR_{ \mu,\shskip  d } \big(  x e^{i\phi} \big)       e (- 2 x y \cos (\phi + \theta) )  x^{2 \rho - 1}  d x \shskip d \phi, 
	\end{align*}
	where  $ u (x) + \tv (x) + \tw (x) \equiv 1 $
	is a partition of unity on $ \lp 0 , \infty \rp$ such that $u (x) $, $v (x)  $ and $w (x) $ are smooth functions supported on $\big(  0, 2 y^{ - \frac 1 2 } \big]$,       $ \big[  y^{  - \frac 1 2 } , 2 \big] $ and $  \left[  1, \infty \rp$, respectively, and that $x^{r} u^{(r)} (x), x^r \tv^{(r)}  (x), x^r \tw^{(r)}   (x) \Lt_{\, r} 1 $. 
	
	First, it follows from \eqref{2eq: bound for J mu m, weak} that
	\begin{align*}
	\bfD_{\mu,\shskip  d }^{\shskip \rho}  \big(y e^{i \theta}\big) \Lt \int_{0}^{2 \pi} \int_0^{2y^{  - \frac 1 2}}     x^{2 \beta - 2 \lambdaup + 1}  d x \shskip d \phi \Lt y^{  \lambdaup - \beta  - 1} = y^{- 2 \lambdaup - 2 \beta} y^{\shskip   3 \lambdaup + \beta - 1} .
	\end{align*}
	Since $\beta  < \bfrac 1 2 -  \lambdaup$ and  $\lambdaup < \bfrac 1 4$, we have $3 \lambdaup + \beta - 1 < 2\lambdaup - \bfrac 1 2  < 0$ and hence $\bfD_{\mu,\shskip  d }^{\shskip \rho}  \big(y e^{i \theta}\big) = o \big( y^{- 2 \lambdaup - 2 \beta} \big)$ as desired.
	
	Second, applying Lemma \ref{lem: integration by parts} with $a = - 2 y$, $b=0$, $A = y^{- \frac 1 2}$, $B = 2$, $M = 1$, $f (z) = \tv (|z|)  \bfJ_{ \mu,\shskip  d }  (z) |z|^{2 \rho-2}$ and $ \gamma =  \beta -  \lambdaup $ (see \eqref{2eq: bound for J mu m, weak, 2}), we deduce that
	\begin{align*}
	\acute{\bfE}_{\mu,\shskip  d }^{\shskip \rho}  \big(y e^{i \theta}\big) \Lt y^{ (1 - \beta + \lambdaup) - 2 } = y^{ \lambdaup - \beta  - 1 }.
	\end{align*}
	As above, we also have $\acute{\bfE}_{\mu,\shskip  d }^{\shskip \rho}  \big(y e^{i \theta}\big) = o \big( y^{- 2 \lambdaup - 2 \beta} \big)$. Likewise,   $\hat{\bfE}_{\mu,\shskip  d }^{\shskip \rho}  \big(y e^{i \theta}\big) = o \big( y^{- 2 \lambdaup - 2 \beta} \big)$. 
	
	Third, according to \eqref{1eq: J = W + W}, we divide $ \grave{\bfE}_{\mu,\shskip  d }^{\shskip \rho}  \big(y e^{i \theta}\big) $ into three similar integrals which contain $ \boldsymbol{W} \big( x e^{i\phi} \big)$, $\boldsymbol{W} \left( - x e^{i\phi} \right)$ and $\bfE_1 \big(  x e^{i\phi} \big) $ respectively. For the first two integrals, we apply Lemma \ref{lem: integration by parts} with $a = - 2 y$, $b= \pm 4$, $A = 1$, $B = \infty$, $M = 1$, $f (z) = \tw (|z|) \boldsymbol{W} (\pm z)  |z|^{2 \rho-2}$ and $ \gamma =  \beta - \bfrac 1 2 $ (by \eqref{1eq: derivatives of W}), and it follows that these two integrals are both $O (y^{-2})$. For the third integral containing $\bfE_1 \big(  x e^{i\phi} \big) $,  we apply Lemma \ref{lem: integration by parts} with $a = - 2 y$, $b= 0 $, $A = 1$, $B = \infty$, $M = 1$, $f (z) = \tw (|z|) \bfE_1 ( z) |z|^{2 \rho-2}$ and $ \gamma =  \beta  $ (by \eqref{1eq: derivatives of E}), and it follows that the third integral is also $O (y^{-2})$. We conclude that	
	 $$\grave{\bfE}_{\mu,\shskip  d }^{\shskip \rho}  \big(y e^{i \theta}\big) = O (y^{-2})= o \big( y^{-2 \lambdaup - 2 \beta} \big)$$ as $\beta < \bfrac 12 - \lambdaup$. 
	 
	 Finally, combining the foregoing results, we obtain the asymptotic formula \eqref{3eq: F sim G} in Lemma \ref{lem: F sim G}.\footnote{Even more, it may be proven that
	 that $\acute{\bfE}_{\mu,\shskip  d }^{\shskip \rho}  \big(y e^{i \theta}\big)$, $ \grave{\bfE}_{\mu,\shskip  d }^{\shskip \rho}  \big(y e^{i \theta}\big) $ and $ \hat{\bfE}_{\mu,\shskip  d }^{\shskip \rho} \big(y e^{i \theta}\big)$ are all arbitrarily small, namely, $O_{A} (y^{- A})$ for arbitrary $A > 0$.}  
	 \end{proof}

	
	The formula \eqref{3eq: G = y} in Lemma \ref{lem: formula for G} is an immediate consequence of the following lemma, applied with $m = \pm 2 d$  and $\varnu   =  \rho \pm \mu$.

	\begin{lem}\label{lem: integral of P}
		Let $m$ be an integer. Let $- \bfrac 1 2 |m| <   \Re \varnu   < \bfrac 3 4$. We have
		\begin{equation*}
		 \begin{split}
		 \int_0^\infty \int_{0}^{2 \pi}   e (- 2 x y \cos (\phi + \theta) )  x^{2 \varnu   - 1} e^{i m \phi} d \phi d x = \left\{ 
		 \begin{split}
		 &\frac {\sin (\pi \varnu ) \Gamma (\varnu  + m/2) \Gamma ( \varnu  - m/2) } {(2 \pi )^{2 \varnu }   y^{2 \varnu } e^{im \theta} },  \\
		 &\frac { i \cos (\pi \varnu ) \shskip \Gamma (\varnu  + m/2) \Gamma ( \varnu  - m/2) } {  (2 \pi )^{2 \varnu }  y^{2 \varnu } e^{im \theta} },
		 \end{split}
		 \right.
		 \end{split}
		\end{equation*}
		according as $m$ is even or odd{\rm;}
		the integral on the left is convergent as iterated integral for $- \bfrac 1 2 |m| <   \Re \varnu   < \bfrac 3 4$ and as double integral only for  $ 0 <   \Re \varnu   < \bfrac 1 2$.
		
	\end{lem}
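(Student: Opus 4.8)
The plan is to collapse the two-dimensional integral to a one-dimensional Hankel--Mellin transform by carrying out the angular integration first, and then to invoke the Weber--Sonine--Schafheitlin formula \eqref{1eq: WSS formula}. The key observation is that the phase $e(-2xy\cos(\phi+\theta))$ is exactly of the shape occurring in Bessel's integral representation \eqref{2eq: integral repn of J}, so the $\phi$-oscillation will manufacture a Bessel function in $x$.

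First I would substitute $\psi = \phi+\theta$ and use the $2\pi$-periodicity of the integrand to factor out $e^{-im\theta}$, reducing the inner integral to $\int_0^{2\pi} e^{-4\pi i x y\cos\psi + i m\psi}\,d\psi$. Replacing $m$ by $-m$ in \eqref{2eq: integral repn of J} gives $\int_0^{2\pi}e^{iz\cos\psi + im\psi}\,d\psi = 2\pi i^{-m}J_{-m}(z)$; taking $z = -4\pi xy$ and using $J_{-m}(-w) = J_m(w)$ for integer $m$, the angular integral becomes $2\pi i^{-m}e^{-im\theta}J_m(4\pi xy)$. In particular, the oscillation in $\phi$ is precisely what produces the Bessel decay that renders the radial integral convergent.

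Next I would carry out the radial integration. After the substitution $u = 4\pi xy$ the remaining integral is $(4\pi y)^{-2\varnu}\int_0^\infty u^{2\varnu-1}J_m(u)\,du$, which by \eqref{1eq: WSS formula} (with order $m$ and Mellin exponent $2\varnu$) equals $\tfrac12(2\pi)^{-2\varnu}y^{-2\varnu}\,\Gamma(\varnu+m/2)/\Gamma(1-\varnu+m/2)$. Collecting the constants from the two steps leaves $\pi i^{-m}e^{-im\theta}(2\pi)^{-2\varnu}y^{-2\varnu}\,\Gamma(\varnu+m/2)/\Gamma(1-\varnu+m/2)$. I would then clear the lone gamma factor in the denominator with Euler's reflection formula, writing $1/\Gamma(1-\varnu+m/2) = \sin(\pi(\varnu-m/2))\,\Gamma(\varnu-m/2)/\pi$, so that the symmetric product $\Gamma(\varnu+m/2)\Gamma(\varnu-m/2)$ emerges and the spurious $\pi$ cancels. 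It remains only to simplify $i^{-m}\sin(\pi(\varnu-m/2))$: splitting into the cases $m$ even and $m$ odd and expanding $\sin(\pi\varnu - \pi m/2)$ by the addition formula, the powers of $i$ combine with $\cos(\pi m/2)$ and $\sin(\pi m/2)$ to give exactly $\sin(\pi\varnu)$ when $m$ is even and $i\cos(\pi\varnu)$ when $m$ is odd. This yields the two claimed expressions.

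The genuinely delicate point, which I would treat most carefully, is convergence and the justification for performing the angular integration first. Near $x=0$ the estimate $J_m(4\pi xy) = O(x^{|m|})$ from \eqref{2eq: bound for J} controls the integrand and forces the lower bound $\Re\varnu > -\tfrac12|m|$; near $x=\infty$ the Hankel asymptotics \eqref{2eq: asymptotic H (1)}--\eqref{2eq: asymptotic H (2)} give $J_m(4\pi xy) = O(x^{-1/2})$ with oscillation $e(\pm 2xy)$, so a single integration by parts yields conditional convergence precisely for $\Re\varnu < \tfrac34$; together these account for the iterated-integral range $-\tfrac12|m| < \Re\varnu < \tfrac34$. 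By contrast, if the radial integration is performed first (the genuine double-integral viewpoint), then for fixed $\phi$ one meets the one-dimensional oscillatory integral $\int_0^\infty x^{2\varnu-1}e(\pm x y')\,dx$ of the type \eqref{0eq: Fourier of x}, which converges only for $0 < \Re\varnu < \tfrac12$; this is exactly the more restrictive double-integral range. The main obstacle is therefore not the evaluation itself---which is a two-step application of \eqref{2eq: integral repn of J} and \eqref{1eq: WSS formula}---but rather the bookkeeping of signs and branches in the angular step, where the Bessel argument $-4\pi xy$ is negative real, together with the careful separation of the two convergence regimes.
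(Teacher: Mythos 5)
Your proof is correct and follows essentially the same route as the paper's: evaluate the angular integral via Bessel's integral representation \eqref{2eq: integral repn of J}, evaluate the radial integral via the Weber--Sonine--Schafheitlin formula \eqref{1eq: WSS formula}, and then use Euler's reflection formula to produce the symmetric product $\Gamma(\varnu+m/2)\Gamma(\varnu-m/2)$ together with the even/odd case split. The only differences are cosmetic---you keep the signed order $m$ (via $J_{-m}(-w)=J_m(w)$) where the paper works with $|m|$, and you spell out the convergence ranges more explicitly than the paper does.
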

	
	\begin{proof}
		The inner integral over $\phi$ may be evaluated by \eqref{2eq: integral repn of J} so that
		\begin{align*}
		\int_0^\infty \int_{0}^{2 \pi}   e (- 2 x y \cos (\phi + \theta) )  e^{ i m \phi} d \phi \, x^{2 \varnu   - 1} d x  = 2 \pi (-i)^{|m|} e^{- im \theta}  \int_0^\infty J_{|m|} (4 \pi x y) x^{2 \varnu  - 1} d x.
		\end{align*}
		In view of \eqref{1eq: WSS formula}, the integral in the right hand side is  equal to 
		\begin{equation*}
\frac { \Gamma (\varnu  + |m|/2) } {2(2 \pi y)^{2 \varnu } \Gamma (1 - \varnu  + |m|/2)} = 
\left\{ 
\begin{split}
 &\frac { \sin (\pi \varnu ) \Gamma (\varnu  + |m|/2) \Gamma ( \varnu  - |m|/2) } {2 \pi (2 \pi y)^{2 \varnu } i^{|m|}  }, \hskip 10 pt \text{ if } m \text{ is even,} \\
 &\frac { \cos (\pi \varnu ) \Gamma (\varnu  + |m|/2) \Gamma ( \varnu  - |m|/2) } {2 \pi (2 \pi y)^{2 \varnu } i^{|m| + 1}  }, \hskip 8.5 pt \text{ if } m \text{ is odd,}
\end{split}
\right.
		\end{equation*}
		as desired.
	\end{proof}

\subsection{Differential equations for $\bfF_{\mu,\shskip  d }^{\shskip \rho}  $}\label{sec: differential equations} Since $ \bfF_{\mu,\shskip  d }^{\shskip \rho} \big(y e^{i \theta}\big) $ is even, it would be convenient to set $4 u = y^2 e^{ 2 i\theta} $ and consider $\bfF_{\mu,\shskip  d }^{\shskip \rho} (2 \sqrt u  )  $.  Assume that $|\Re \mu | <   \Re \rho < \bfrac 1 2$ so that the integral $\bfF_{\mu,\shskip  d }^{\shskip \rho} (2 \sqrt u  )  $ is always convergent and gives rise to a continuous function.
We are now going to verify
\begin{align}\label{5eq: differential equations}
\nabla_{\rho, \shskip \mu + d} \big( \bfF_{\mu,\shskip  d }^{\shskip \rho}  (2 \sqrt u  )  \big) = 0, \hskip 10 pt \overline \nabla_{\rho, \shskip  \mu - d}  \big( \bfF_{\mu,\shskip  d }^{\shskip \rho}  (2 \sqrt u  )  \big) = 0,
\end{align}
with the hypergeometric differential operators $\nabla_{\rho, \shskip \mu + d} $ and $ \overline \nabla_{\rho, \shskip  \mu - d}  $ defined as in \eqref{2eq: nabla, 1} and \eqref{2eq: nabla, 2} respectively.
By symmetry, we only need to verify the former, which, if we set $\varnu  = \mu + d$ for simplify, may be explicitly written as
\begin{align*}
4 u (1-u) \frac {\partial^2 \bfF_{\mu,\shskip  d }^{\shskip \rho}  (2 \sqrt u  ) } {\partial u^2}   + 2( 1 - 2 (\rho + 1) u ) \frac {\partial \bfF_{\mu,\shskip  d }^{\shskip \rho}  (2 \sqrt u  ) } {\partial u}  -  \lp \rho^2 - \varnu ^2 \rp \bfF_{\mu,\shskip  d }^{\shskip \rho} (2 \sqrt u  ) = 0.
\end{align*}

	For $s  , r = 0, 1$, $2$, with $s + r = 0, 1, 2$, we introduce
\begin{align*}
\bfI_{s ,\shskip  r,\shskip  \mu, \shskip  d}^{\shskip \rho} (u) =  \frac 1 {2 \sqrt{ u^{s } } }   \sideset{}{_{\BC \smallsetminus \{0 \}}}{\iint}  z^{\shskip\rho + s  +  r - 1} \widebar {z}^{\shskip \rho - 1} (\partial/\partial z)^{r} \bfJ_{ \mu, \shskip  d} (z) e \big( \hskip - 2 pt -    2 \big( z \sqrt{u} + {\widebar z} \sqrt{  \widebar u} \big)    \big) i d z \nwedge d \widebar z.
\end{align*}
The integral $\bfI_{s ,\shskip  r,\shskip  \mu, \shskip  d}^{\shskip \rho} (u) $ should be regarded as distribution on $\BC$. Precisely, for any Schwartz function $ f (u) \in  \SS (\BC  )$ (the Schwartz space on $\BC$ is denoted by $ \SS (\BC  ) $ as usual), let
\begin{align*}
\big\langle u^{t} \bfI_{s ,\shskip  r,\shskip  \mu, \shskip  d}^{\shskip \rho}  , f   \big\rangle =   \sideset{}{_{\BC \smallsetminus \{0 \}}}{\iint}  z^{\shskip\rho + s  +  r - 1} \widebar {z}^{\shskip \rho - 1} (\partial/\partial z)^{r}  \bfJ_{ \mu, \shskip  d} (z) f^{\scriptscriptstyle\sharp} (z)   { i d z \nwedge d \widebar z},
\end{align*}
with $ f^{\scriptscriptstyle\sharp} (z) \in  \SS (\BC  )$ given by
\begin{align*}
f^{\scriptscriptstyle\sharp} (z) = \sideset{}{_{ \BC \smallsetminus \{0 \} }}{\iint}   { u^{1+2t-s} \widebar u f (u^2)  e  (   -    2  ( z  {u} + {\widebar z}  {  \widebar u}  )     )}     {i d u \nwedge d \widebar u},
\end{align*}
in which $t = 0, 1, 2$ or  $t = -1$ only if $s = 0, 1$.
Note here that $ f^{\scriptscriptstyle\sharp} (z /2)$ is the Fourier transform of $  u^{1+2t-s} \widebar u f (u^2)  $ and hence is also Schwartz. In the theory of distributions, we are free to differentiate under the integral and integrate by parts in the formal manner as what we shall do in the following. 

To start with, note that $ \bfF_{\mu,\shskip  d }^{\shskip \rho} (2 \sqrt u  )  = \bfI_{0 ,\shskip  0,\shskip  \mu, \shskip  d}^{\shskip \rho} (u) $.
For brevity, we put $\bfI_{s ,\shskip  r}  = \bfI_{s ,\shskip  r,\shskip  \mu, \shskip  d}^{\shskip \rho} $ and $\bfI_{s } = \bfI_{s , \shskip 0}$.

First, for $s  = 0, 1$, we have
\begin{align*}
\frac {\partial \bfI_{s  }} {\partial u} = - \frac {s} {2 u } \bfI_{s } - 2 \pi i \bfI_{s + 1 }
\end{align*}
and
\begin{align*}
\frac {\partial^2 \bfI_{0 }} {\partial u^2} & = - 2 \pi i \frac {\partial \bfI_{1 }} {\partial u} =  \frac { \pi i} {u } \bfI_{1 } - 4 \pi^2 \bfI_{2} .
\end{align*}
Second, for $s, r  = 0 , 1 $, with $s + r = 0, 1$,
by partial integration, 
\begin{align*}
&   4 \pi i  \sqrt u  \sideset{}{_{\BC \smallsetminus \{0 \}}}{\iint}  z^{\shskip\rho + s  +  r} \shskip \widebar {z}^{\shskip \rho - 1} (\partial/\partial z)^{r} \bfJ_{ \mu, \shskip  d} (z)  e \big( \hskip - 2 pt -    2 \big( z \sqrt{u} + {\widebar z} \sqrt{  \widebar u} \big)    \big) i d z \nwedge d \widebar z\\
 = \ & (\rho + s  +  r) \sideset{}{_{\BC \smallsetminus \{0 \}}}{\iint}  z^{\shskip\rho + s  +  r -1} \widebar {z}^{\shskip \rho - 1} (\partial/\partial z)^{r} \bfJ_{ \mu, \shskip  d} (z) e \big( \hskip - 2 pt -    2 \big( z \sqrt{u} + {\widebar z} \sqrt{  \widebar u} \big)    \big) i d z \nwedge d \widebar z \\
& \hskip 33 pt + \sideset{}{_{\BC \smallsetminus \{0 \}}}{\iint}  z^{\shskip\rho + s  +  r} \shskip \widebar {z}^{\shskip \rho - 1} (\partial/\partial z)^{r+1} \bfJ_{ \mu, \shskip  d} (z) e \big( \hskip - 2 pt -    2 \big( z \sqrt{u} + {\widebar z} \sqrt{  \widebar u} \big)    \big) i d z \nwedge d \widebar z. 
\end{align*}
It follows that
\begin{align*}
  4 \pi i u   \bfI_{s +1,\shskip  r } =  ( \rho +  s  +  r) \bfI_{s ,\shskip  r } +   \bfI_{s ,\shskip  r+1 }, 
\end{align*}
and hence
\begin{align*}
-   {16 \pi^2} {u^2} \bfI_{2 }   =     {4 \pi i } (\rho+1) u \bfI_{1  } +  {4 \pi i} u \bfI_{1 ,\shskip   1 }   =  (\rho^2+\rho)  \bfI_0 + (2\rho+2) \bfI_{0, \shskip 1} +\bfI_{0, \shskip 2} .
\end{align*}
Third, since $\nabla_{  \mu + d} \lp \bfJ_{\mu,\shskip  d} \big(   z  / 4\pi  \big) \rp = 0$ (see \eqref{2eq: nabla} and \eqref{2eq: nabla J = 0}), we have
\begin{align*}
\bfI_{0, \shskip 2} + \bfI_{0, \shskip 1} + 16 \pi^2 u \bfI_{2, \shskip 0} - \varnu ^2 \bfI_{0, \shskip 0} = 0 \hskip 10 pt (\varnu  = \mu+ d).
\end{align*}
Finally, combining these, we have
\begin{align*}
& \hskip 13 pt 4 u (1-u) \frac {\partial^2 \bfI_0 } {\partial u^2}   + 2( 1 - 2 (\rho + 1) u ) \frac {\partial \bfI_0 } {\partial u}  -  \lp \rho^2 - \varnu ^2 \rp \bfI_0 \\
& = 4 u (1-u) \lp \frac { \pi i} {u } \bfI_{1 } - 4 \pi^2 \bfI_{2} \rp - 4 \pi i ( 1 - 2 (\rho + 1) u )  \bfI_{1 }  -  \lp \rho^2 - \varnu ^2 \rp \bfI_0\\ 
& = - 16  \pi^2 u   \bfI_{2} + 16  \pi^2 u^2   \bfI_{2} + 4 \pi i ( 2 \rho + 1 ) u \bfI_{1 }  -  \lp \rho^2 - \varnu ^2 \rp \bfI_0 \\
& = - 16  \pi^2 u   \bfI_{2} -  ( (\rho^2 \hskip -1 pt + \hskip -1 pt \rho) \bfI_0 \hskip -1 pt + \hskip -1 pt (2\rho \hskip -1 pt + \hskip -1 pt 2) \bfI_{0, \shskip 1} \hskip -1 pt + \hskip -1 pt\bfI_{0, \shskip 2} ) +   ( 2 \rho + 1 )  (\rho \bfI_0 \hskip -1 pt + \hskip -1 pt \bfI_{0, \shskip 1} )  -  \lp \rho^2 \hskip -1 pt - \hskip -1 pt \varnu ^2 \rp \bfI_0 \\
& = \varnu ^2 \bfI_0 - 16  \pi^2 u   \bfI_{2} -   \bfI_{0, \shskip 1} - \bfI_{0, \shskip 2}  \\
& = 0,
\end{align*} 
as desired.

\subsection{Conclusion} Combining \eqref{3eq: asymptotic F (y)} and \eqref{5eq: differential equations}, we deduce from Lemma \ref{lem: hypergeometric equation} that,  under the conditions $|\Re \mu | <   \Re \rho < \bfrac 1 2$ and $\mu \neq 0$,
\begin{equation}\label{3eq: F = C GG}
\bfF_{\mu,\shskip  d }^{\shskip \rho} (2\sqrt{u}) = 4^{-\mu-\rho} C_{\mu, \shskip d}^{\shskip \rho} \cdot  G_{\rho, \shskip \mu+d}^{(1)} (u)  G_{\rho, \shskip \mu-d}^{(1)} (\widebar u)  + 4^{\mu-\rho} C_{- \mu, \shskip - d}^{\shskip \rho} \cdot G_{\rho, \shskip \mu+d}^{(2)} (u)  G_{\rho, \shskip \mu-d}^{(2)} (\widebar u),
\end{equation}
for all $u$ in the complex plane (the choice of the square root $\sqrt {u}$ is not essential since $\bfF_{\mu,\shskip  d }^{\shskip \rho}$ is even). Let $2 \sqrt u = y e^{i \theta}$. In view of the formulae for $G_{\rho, \shskip \varnu }^{(1)}$ and $G_{\rho, \shskip \varnu }^{(2)}$ in \eqref{2eq: G(1,2) and F(1,2)} and the definition of $\bfF_{\mu,\shskip  d }^{\shskip \rho}$ in \eqref{3eq: defn of F}, it is clear that \eqref{3eq: F = C GG} is equivalent to \eqref{0eq: general W-S, C} in Theorem \ref{thm: general W-S formula}. Finally, thanks to the principal of analytic continuation, it follows from Lemma \ref{lem: convergence of F and G} (1) that the condition $\mu \neq 0$ may be removed and the condition $|\Re \mu | <   \Re \rho < \bfrac 1 2$ may be improved into  $|\Re \mu | <   \Re \rho < 1 $ if $y > 2$. Note that in the definition of $C_{\mu, \shskip d}^{\shskip \rho} $ the zero of $\sin (\pi\mu)$ is annihilated by the pole of $\Gamma  (1 +  \mu + d ) \Gamma  (1 +  \mu - d  )$ at $\mu = 0$ if $d \neq 0$.

\section{Proof of Theorem \ref{thm: regularized W-S, C}}\label{sec: regularized W-S}

Consider 
\begin{equation*}
\bfF_{\mu,\shskip  d }^{\shskip \rho} \big( y e^{i \theta} \big) - \bfP_{\mu,\shskip  d }^{\shskip \rho} \big( y e^{i \theta} \big) =  \int_{0}^{2 \pi} \int_0^\infty  \bfM_{ \mu,\shskip  d }    \big(  x e^{i\phi} \big)   e (- 2 x y \cos (\phi + \theta) )  x^{2 \rho - 1}  d x \shskip d \phi,
\end{equation*}
in which $\bfM_{ \mu,\shskip  d }   (z) = \bfJ_{ \mu,\shskip  d }   (z) - \bfR_{ \mu,\shskip  d }   (z) $ is the regularized Bessel function defined in \eqref{0eq: defn M}.
It is examined in  Lemma \ref{lem: convergence of F and G} that  for the convergence of this integral at infinity we need $\Re \rho < \bfrac 1 2 - |\Re \mu|$ and also  $y >0$. On the other hand, in view of \eqref{2eq: bound for J mu m, weak}, the convergence at zero is secured if $ |\Re \mu | - 1 < \Re \rho $. 

\begin{lem}
The integral $\bfF_{\mu,\shskip  d }^{\shskip \rho} \big( y e^{i \theta} \big) - \bfP_{\mu,\shskip  d }^{\shskip \rho} \big( y e^{i \theta} \big)$ is convergent for $y > 0$ if $|\Re \mu | - 1 <   \Re \rho < \bfrac 1 2 - |\Re \mu |$, uniformly for $\Re \mu$ and $\Re \rho$ in compact sets.
\end{lem}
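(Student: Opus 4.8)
The plan is to estimate the integrand of $\bfF_{\mu, d }^{\rho} - \bfP_{\mu, d }^{\rho}$, namely $\bfM_{\mu, d }(xe^{i\phi})\, e(-2xy\cos(\phi+\theta))\,x^{2\rho-1}$, separately near the origin and near infinity by means of a partition of unity $\lp 0,\infty\rp = (0,2]\cup[1,\infty)$, exactly as in the proof of Lemma \ref{lem: convergence of F and G}. The decisive observation will be that the regularization in $\bfM_{\mu, d} = \bfJ_{\mu, d} - \bfR_{\mu, d}$ (see \eqref{0eq: defn M}) is precisely what widens the admissible range of $\Re\rho$ at the origin.

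Near the origin the individual integrals $\bfF_{\mu, d }^{\rho}$ and $\bfP_{\mu, d }^{\rho}$ each demand $\Re\rho > |\Re\mu|$, since $\bfJ_{\mu, d}(z)$ and $\bfR_{\mu, d}(z)$ both carry the singular leading term of order $|z|^{-2|\Re\mu|}$. The whole point of forming $\bfM_{\mu, d}$ is that these leading terms cancel: by the regularized bound \eqref{2eq: bound for J mu m, weak} one has $\bfM_{\mu, d}(z) \Lt_{\,\mu, d, \lambdaup} |z|^{2-2\lambdaup}$, with $\lambdaup = |\Re\mu|$ in the generic case and $\lambdaup$ arbitrarily close to $|\Re\mu|$ otherwise. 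Hence the contribution on $(0,2]$ is majorized by
\begin{align*}
	\int_{0}^{2\pi}\int_0^2 x^{2-2\lambdaup}\, x^{2\Re\rho-1}\, dx\, d\phi \Lt \int_0^2 x^{2\Re\rho+1-2\lambdaup}\, dx,
\end{align*}
which converges absolutely exactly when $2\Re\rho+1-2\lambdaup > -1$, i.e. $\Re\rho > \lambdaup - 1$. Letting $\lambdaup \downarrow |\Re\mu|$ yields the left inequality $|\Re\mu|-1 < \Re\rho$.

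Near infinity I would split $\bfM_{\mu, d} = \bfJ_{\mu, d} - \bfR_{\mu, d}$ and treat the two pieces as in Lemma \ref{lem: convergence of F and G}. For the $\bfJ_{\mu, d}$ piece one uses $\bfJ_{\mu, d}(z) = O(1/|z|)$ for $|z|\geqslant 1$, so the integrand on $[1,\infty)$ is $O(x^{2\Re\rho-2})$ and the integral is already absolutely convergent whenever $\Re\rho < \bfrac 1 2$; this holds throughout our range, so here no integration by parts and no restriction $y > 2$ is needed. For the non-decaying $\bfR_{\mu, d}$ piece I would invoke Lemma \ref{lem: integration by parts} with $a = -2y$, $b = 0$, $f(z) = \tw(|z|)\bfR_{\mu, d}(z)|z|^{2\rho-2}$ and $\gamma = \Re\rho + |\Re\mu|$. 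Since $|b| = 0 < 2y = |a|$ for every $y > 0$, the hypotheses are met, and the requirement $\gamma < \bfrac 1 2$ produces convergence exactly for $\Re\rho < \bfrac 1 2 - |\Re\mu|$, which is the right inequality. The stated uniformity for $\Re\mu$, $\Re\rho$ in compact sets is inherited from the uniform implied constants in \eqref{2eq: bound for J mu m, weak} and in Lemma \ref{lem: integration by parts}.

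There is no genuinely hard step; the content lies entirely in recognizing that the regularization removes the $|z|^{-2|\Re\mu|}$ singularity at the origin through \eqref{2eq: bound for J mu m, weak}, thereby shifting the lower endpoint from $|\Re\mu|$ down to $|\Re\mu|-1$, while the behavior at infinity is unchanged from Lemma \ref{lem: convergence of F and G} and still forces the upper endpoint $\bfrac 1 2 - |\Re\mu|$ via the non-decaying part $\bfR_{\mu, d}$. The only point deserving a remark is that the window $|\Re\mu|-1 < \Re\rho < \bfrac 1 2 - |\Re\mu|$ is non-empty precisely when $|\Re\mu| < \bfrac 3 4$, which is consistent with the standing hypothesis $|\Re\mu| < \bfrac 1 2$ of Theorem \ref{thm: regularized W-S, C}.
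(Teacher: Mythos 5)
Your proposal is correct and takes essentially the same approach as the paper: the paper's own (very brief) justification likewise secures convergence at the origin via the regularized bound \eqref{2eq: bound for J mu m, weak} (giving $\Re \rho > |\Re \mu| - 1$) and convergence at infinity via the analysis of Lemma \ref{lem: convergence of F and G}, namely absolute convergence of the $\bfJ_{\mu, d}$ piece for $\Re \rho < \bfrac 1 2$ and Lemma \ref{lem: integration by parts} with $a = -2y$, $b = 0$, $\gamma = \Re \rho + |\Re \mu|$ for the $\bfR_{\mu, d}$ piece. Your explicit handling of $\lambdaup \downarrow |\Re \mu|$ in the non-generic case merely spells out what the paper leaves implicit, so there is nothing to correct.
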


Let $ C_{\mu, \shskip d}^{\shskip \rho} $ and $F_{\rho, \shskip \varnu }^{(1, \shskip 2)} (z)  $ be defined in \eqref{0eq: defn of C} and \eqref{1eq: defn of F(1,2)}. By Theorem \ref{thm: general W-S formula} and Lemma \ref{lem: formula for G}, we have
\begin{equation}\label{4eq: F-P}
\begin{split}
\bfF_{\mu,\shskip  d }^{\shskip \rho} \big( y e^{i \theta} \big) - \bfP_{\mu,\shskip  d }^{\shskip \rho} \big(  y e^{i \theta} \big)  =  \frac { C_{\mu, \shskip d}^{\shskip \rho}     } {y^{2 \mu + 2\rho} e^{2 i d \theta}} \Big(  F_{\rho, \shskip \mu+d}^{(1)} \big(4 / y^2 e^{2i \theta}\big) \hskip -1  pt F_{\rho, \shskip \mu-d}^{(1)} \big(4 e^{ 2i \theta} / y^2 \big) - 1 \Big) & \\
 +   C_{- \mu, \shskip - d}^{\shskip \rho} \shskip y^{2 \mu - 2\rho} e^{2 i d \theta}   \Big(F_{\rho, \shskip \mu+d}^{(2)} \big(4 / y^2 e^{2i \theta}\big) \hskip -1  pt F_{\rho, \shskip \mu-d}^{(2)} \big(4 e^{2i \theta} / y^2 \big)- 1\Big) &.
\end{split}
\end{equation}
provided that $|\Re \mu | <   \Re \rho < \bfrac 1 2 - |\Re \mu |$. We claim that \eqref{4eq: F-P} remains valid in the extended range $|\Re \mu | - 1 <   \Re \rho < \bfrac 1 2 - |\Re \mu |$. For $d \neq 0$,  the validity of \eqref{4eq: F-P} extends to $|\Re \mu | - 1 <   \Re \rho < \bfrac 1 2 - |\Re \mu |$, since $C_{\mu, \shskip d}^{\shskip \rho}     $ is analytic in this range. Now consider the case $d = 0$. For simplicity, let us assume $\mu \neq 0$. Then $C_{\mu, \shskip 0}^{\shskip \rho} $ has a simple pole at $\rho = - \mu$. However, we have $F_{\rho, \shskip \mu}^{(1)} (z) - 1 \equiv 0 $ if $\rho = -\mu$ and similarly  $F_{\rho, \shskip \mu}^{(2)} (z) - 1 \equiv 0 $ if $\rho = \mu$. So the extension of \eqref{4eq: F-P} to the range $|\Re \mu | - 1 <   \Re \rho < \bfrac 1 2 - |\Re \mu |$ is still permissible. 

Assume that $|\Re \mu | < \bfrac 1 2$. Let $\rho \ra 0$.  Since
\begin{align*}
C_{\mu, \shskip d}^{0} =    {1 }  /  \big({d^2 - \mu^2}\big),
\end{align*}
and 
\begin{align*}
& F_{0, \shskip \varnu }^{(1)} (z^2) = {_2 F_1} \hskip - 1 pt \lp \frac { \varnu }  {2}; \frac  {1 +   \varnu  }  {2}; 1 +\varnu  \shskip; z^2 \rp =   \bigg(  \frac 2 { 1 + \sqrt {1-z^2} }  \bigg)^{ \varnu },  \\
&
F_{0, \shskip \varnu }^{(2)} (z^2) = {_2 F_1} \hskip - 1 pt \lp - \frac { \varnu }  {2};  \frac  {1  - \varnu } {2}; 1 - \varnu  \shskip; z^2  \rp =  \bigg(  \frac   { 1 + \sqrt {1-z^2} } 2 \bigg)^{ \varnu },
\end{align*}
we find that the limiting form of \eqref{4eq: F-P} as $\rho \ra 0$ is exactly \eqref{0eq: special W-S, C} in Theorem \ref{thm: regularized W-S, C}.

\section{Proof of Theorem \ref{thm: Venkatesh} over complex numbers}\label{sec: Proof of Venkatesh}

To start with, we translate the formula \eqref{1eq: main identity} into our language for $F_{\infty} = \BC$. According to the notation of Venkatesh, set $\phi (z) = \varphi (z^2)$ and $\sqrt { \kappa} = \big( k - \sqrt {k^2 - 4} \big) / 2 $ ($|k| > 2$), then the identity we need to prove is the following,
\begin{align}\label{5eq: main identity}
\sideset{}{_{\BC \smallsetminus \{0 \}}}{\iint}  \phi (z) e (\Tr (k z) )  \frac {i d z \nwedge d \widebar z} {|z|^2} = 2 \int_{ -\infty }^{\infty} h (t)  \sinh (\pi t) \left| \frac { k + \sqrt {k^2 - 4} } 2 \right|^{- 2 i t}  \frac { d t} { t} ,
\end{align}
where
\begin{align}\label{5eq: Bessel transform}
\phi (z) = \frac 1 2 \int_{ -\infty }^{\infty}  h (t) \bfJ_{ it} ( z ) \sinh (\pi t) t d t.
\end{align}
Recall here that $h(s)$ is a holomorphic even function in the strip $ |\Im s | \leqslant M$, satisfying decay estimates
\begin{align}\label{4eq: bounds for h (s)}
h (t + i \sigma) \Lt e^{-\pi |t|} (|t|+1)^{- N}.
\end{align}
Define $\omega (z) $ to be
\begin{align}\label{5eq: Mellin transform}
\omega (z) = \frac 1 2 \int_{ -\infty }^{\infty}  h (t) \bfR_{ it} ( z ) \sinh (\pi t) t d t 
= i \int_{ -\infty }^{\infty}  h (t)   {|2 \pi z|^{2 i t } }   \frac { t d t } {\Gamma  (  1 + it )^2 },
\end{align}
where $ \bfR_{ it} (z) = \bfR_{ it, \shskip 0} (z) $ is defined as in (\ref{0eq: defn P mu m (z)}, \ref{0eq: defn of R}). Note that $\omega (z)$ is simply the (horizontal) Mellin inverse transform of the function $ i (2 \pi)^{2it} h (t) t / \Gamma (1+it)^2 $.

We quote from Lemma 4.1 in \cite{Qi-Gauss} the following uniform estimate for $\boldJ_{it} (z)$,
\begin{align}\label{4eq: uniform bound for J}
 t \boldJ_{it} (z) \Lt  ({|t| + 1})^3 \min \big\{ 1, 1/ {  |z|} \big\}  .
\end{align}
Further, it is clear from \cite[3.13 (1)]{Watson} (note that $|\Gamma (1+it)|^2 = \pi t / \sinh (\pi t)$) that
\begin{align}\label{4eq: uniform bound for J-E}
t \lp \boldJ_{it} (z) - \bfR_{ it} (z) \rp \Lt    |z|^2 , \hskip 10 pt |z| \leqslant 1.
\end{align}

We first write the integral on the left of \eqref{5eq: main identity} as follows,
\begin{align}\label{5eq: difference} 
\sideset{}{_{\BC \smallsetminus \{0 \}}}{\iint}  (\phi (z) - \omega (z) ) e (\Tr (k z) )  \frac {i d z \nwedge d \widebar z} {|z|^2} + \sideset{}{_{\BC \smallsetminus \{0 \}}}{\iint}  \omega (z) e (\Tr (k z) )  \frac {i d z \nwedge d \widebar z} {|z|^2}
\end{align}
Applying \eqref{5eq: Bessel transform} and \eqref{5eq: Mellin transform}, the first integral in \eqref{5eq: difference} is equal to
\begin{align*}
 \frac 1 2 \sideset{}{_{\BC \smallsetminus \{0 \}}}{\iint}  \lp \int_{ -\infty }^{\infty}  h (t) \big( \bfJ_{ it} ( z ) - \bfR_{ it} ( z ) \big) \sinh (\pi t) t d t \rp e (\Tr (k z) ) \frac {i d z \nwedge d \widebar z} {|z|^2}. 
\end{align*}
Making crucial use of \eqref{4eq: bounds for h (s)}, \eqref{4eq: uniform bound for J} and \eqref{4eq: uniform bound for J-E}, we verify that the double integral is
absolutely convergent except for the contribution from $ \bfR_{ it} ( z ) $  in the vicinity of $z = \infty$. Nevertheless, we may switch the order of integration. This is because the integral becomes absolutely convergent  after integration by part in a neighborhood of  $z = \infty$ (see Lemma \ref{lem: integration by parts}); we still need \eqref{4eq: bounds for h (s)}  to secure this. We then obtain 
\begin{align*}
\frac 1 2    \int_{ -\infty }^{\infty}  h (t) \lp  \sideset{}{_{\BC \smallsetminus \{0 \}}}{\iint} \big( \bfJ_{ it} ( z ) - \bfR_{ it} ( z ) \big) e (\Tr (k z) ) \frac {i d z \nwedge d \widebar z} {|z|^2} \rp \sinh (\pi t) t d t. 
\end{align*}
The inner integral is evaluated in Theorem \ref{thm: regularized W-S, C}. Thus
\begin{equation}\label{4eq: part 1}
\begin{split}
 \sideset{}{_{\BC \smallsetminus \{0 \}}}{\iint}  (\phi (z) - \omega (z) ) & e (\Tr (k z) )  \frac {i d z \nwedge d \widebar z} {|z|^2}  = \\
 & 2 \int_{ -\infty }^{\infty}  h (t) {\textstyle \lp \big|\big( \hskip -1 pt k + \sqrt {k^2 - 4} \big) / 2 \big|^{-2it} - |k|^{-2it} \rp} \sinh (\pi t) \frac {d t} {t}. 
\end{split}
\end{equation}
Now consider the second integral in \eqref{5eq: difference}. Applying  \eqref{5eq: Mellin transform}, we have
\begin{align*}
 \sideset{}{_{\BC \smallsetminus \{0 \}}}{\iint} \hskip -2 pt \omega (z) e (\Tr (k z) )  \frac {i d z \nwedge d \widebar z} {|z|^2} = i \hskip -1 pt \sideset{}{_{\BC \smallsetminus \{0 \}}}{\iint} \hskip -2 pt \lp \int_{ -\infty }^{\infty} \hskip -1 pt  h (t)   {|2 \pi z|^{2 i t } }   \frac { t d t } {\Gamma  (  1 + it )^2 } \hskip -1 pt \rp e (\Tr (k z) )  \frac {i d z \nwedge d \widebar z} {|z|^2}.
\end{align*}
Let $\sigma > 0$ be small, say $\sigma < \bfrac 1 2$. In view of \eqref{4eq: bounds for h (s)}, we may shift the line of integration in the inner
integral to $\Im  t = - \sigma $, then the integral on the right turns into
\begin{align*}  
i \hskip -1 pt \sideset{}{_{\BC \smallsetminus \{0 \}}}{\iint} \hskip -2 pt \lp \int_{- i \sigma -\infty }^{- i \sigma +\infty} \hskip -1 pt  h (t)   {|2 \pi z|^{2 i t } }   \frac { t d t } {\Gamma  (  1 + it )^2 } \hskip -1 pt \rp e (\Tr (k z) )  \frac {i d z \nwedge d \widebar z} {|z|^2}.
\end{align*}
Again, we may switch the order of integration, although this integral is not absolutely convergent in the vicinity of $z = \infty$, obtaining 
\begin{align*}  
i  \hskip -1 pt\int_{- i \sigma -\infty }^{- i \sigma +\infty} \hskip -1 pt  h (t) \lp \sideset{}{_{\BC \smallsetminus \{0 \}}}{\iint} \hskip -2 pt     {|2 \pi z|^{2 i t } }    e (\Tr (k z) )  \frac {i d z \nwedge d \widebar z} {|z|^2} \rp \frac { t d t } {\Gamma  (  1 + it )^2 } .
\end{align*}
The inner integral is evaluated in Lemma \ref{lem: integral of P}  (it is convergent actually as a double integral, since $0 < \Re (it) = \sigma < \bfrac 1 2$). Thus we have
\begin{align*}
\sideset{}{_{\BC \smallsetminus \{0 \}}}{\iint} \hskip -2 pt \omega (z) e (\Tr (k z) )  \frac {i d z \nwedge d \widebar z} {|z|^2} = 2 \int_{- i \sigma -\infty }^{- i \sigma +\infty} \hskip -1 pt  h (t) |k|^{-2it} \sinh (\pi t) \frac {d t} t .
\end{align*}
Utilizing again \eqref{4eq: bounds for h (s)}, we are now free to move the line of integration back to $\Im t = 0 $,
obtaining
\begin{align}\label{4eq: part 2}
\sideset{}{_{\BC \smallsetminus \{0 \}}}{\iint} \hskip -2 pt \omega (z) e (\Tr (k z) )  \frac {i d z \nwedge d \widebar z} {|z|^2} = 2 \int_{ -\infty }^{\infty} \hskip -1 pt  h (t) |k|^{-2it} \sinh (\pi t) \frac {d t} t .
\end{align}
In conclusion, the formula \eqref{5eq: main identity} is proven by summing \eqref{4eq: part 1} and \eqref{4eq: part 2}.

\appendix

\section{}\label{appendix}

In this appendix, we record some extensions of the general Weber-Schafheitlin formula in Theorem \ref{thm: general W-S formula}. These results may be proven by modifying our arguments in \S\S \ref{sec: hypergeometric} and \ref{sec: Proof of Theorem 1}.  

Let $\mu $ be a complex number  and $ d  $ be an integer or half-integer.  Define
\begin{equation*}
	J_{\mu ,\shskip  d} (z) = J_{\mu + d } \lp  z \rp J_{\mu -  d  } \lp  {\widebar z} \rp,
\end{equation*}
and
\begin{equation*}
\bfJ_{ \mu,\shskip  d} \lp z \rp = 
\left\{ 
\begin{split}
& \frac {1} {\sin (\pi \mu)} \lp J_{-\mu,\shskip  -d} (4 \pi   z) - J_{\mu,\shskip  d} (4 \pi   z)     \rp, \hskip 5 pt \text {if } 2d \text{ is even},\\
& \frac {i} {\cos (\pi \mu)} \lp J_{-\mu,\shskip  -d} (4 \pi   z) + J_{\mu,\shskip  d} (4 \pi   z)  \rp , \hskip 3.3 pt \text {if } 2d \text{ is odd}.
\end{split}
\right.
\end{equation*} We need to take the limit in the non-generic case when  $ \mu - d$ is an integer.
Note that $ \bfJ_{ \mu,\shskip  d} \lp z \rp $ is an even or odd function according as $2d$ is even or odd. 

Let $\rho$ be a complex number and $m $ be an integer. We consider the following integrals
\begin{align*}
& \bfF_{\mu,\shskip  d }^{\shskip \rho, \shskip m} \big( y e^{i \theta} \big) = \int_{0}^{2 \pi} \int_0^\infty \bfJ_{ \mu,\shskip  d}    \big(  x e^{i\phi} \big)  \cos (4 \pi x y \cos (\phi + \theta) )  x^{2 \rho - 1} e^{i m \phi} d x \shskip d \phi, \\
& \bfG_{\mu,\shskip  d }^{\shskip \rho, \shskip m} \big( y e^{i \theta} \big) = \int_{0}^{2 \pi} \int_0^\infty \bfJ_{ \mu,\shskip  d}    \big(  x e^{i\phi} \big)  \sin (4 \pi x y \cos (\phi + \theta) )  x^{2 \rho - 1} e^{i m \phi} d x \shskip d \phi .
\end{align*}
By simple parity considerations, it is clear that the former or the latter integral is zero according as $ m + 2 d$ is odd or even (as the integrand is an odd function). 

We have the following generalization of Theorem \ref{thm: general W-S formula}. 

\begin{thm}
Let notation be as above. 
Define	\begin{align*}
	F^{\lambdaup}_{ \varnu } (z) = {_2 F_1} \hskip - 2 pt \lp   \frac {\lambdaup  +   \varnu } {2}; \frac {1   +   \lambdaup +   \varnu  } {2}; 1   +  \varnu  \shskip; z   \rp .
	\end{align*}
Set 
\begin{equation*}
C_{ \mu,\shskip  d}^{\shskip \rho, \shskip m} = -  \frac {      \Gamma (\rho +   \mu + m/2  + d) \Gamma ( \rho  +   \mu - m/2 - d)   } {   \Gamma  (1 +  \mu + d ) \Gamma  (1 +  \mu - d  )} \cdot \left\{\begin{split}
& \frac {  \sin (\pi ( \rho  +  \mu))  } {(2 \pi  )^{ 2 \rho} \sin (\pi \mu)}, \hskip 5 pt \text{if } 2d \text{ is even}, \\
&  \frac { \sin (\pi ( \rho  +  \mu))  } {i(2 \pi  )^{ 2 \rho} \cos (\pi \mu)}, \hskip 3.3 pt \text{if } 2d \text{ is odd},
\end{split} \right.
\end{equation*} 
\begin{equation*}
D_{ \mu,\shskip  d}^{\shskip \rho, \shskip m} =   \frac {      \Gamma (\rho +   \mu + m/2  + d) \Gamma ( \rho  +   \mu - m/2 - d)   } {   \Gamma  (1 +  \mu + d ) \Gamma  (1 +  \mu - d  )} \cdot \left\{\begin{split}
& \frac {  \cos (\pi ( \rho  +  \mu))  } {(2 \pi  )^{ 2 \rho} \sin (\pi \mu)}, \hskip 5 pt \text{if } 2d \text{ is even}, \\
&  \frac {  \cos (\pi ( \rho  +  \mu))  } {i (2 \pi  )^{ 2 \rho} \cos (\pi \mu)}, \hskip 3.3 pt \text{if } 2d \text{ is odd}.
\end{split} \right.
\end{equation*} 
For  all complex $z$, we have
	\begin{align*}
	& \bfF_{\mu,\shskip  d }^{\shskip \rho, \shskip m} (z) = \frac { C_{ \mu,\shskip  d}^{\shskip \rho, \shskip m} F^{\shskip \rho + \frac 1 2 m}_{ \mu+d} \lp 4/z^2 \rp F^{\shskip \rho - \frac 1 2 m}_{ \mu-d} \lp 4/\widebar z^2 \rp } {|z|^{2\rho+2\mu} (z/|z|)^{ m + 2 d} } + \frac { C_{ - \mu,\shskip - d}^{\shskip \rho, \shskip m} F^{\shskip \rho + \frac 1 2 m}_{ - \mu - d} \lp 4/z^2 \rp F^{\shskip \rho - \frac 1 2 m}_{- \mu+d} \lp 4/\widebar z^2 \rp } {|z|^{2\rho-2\mu} (z/|z|)^{m - 2 d} },
	\end{align*}
	if $m + 2 d$ is even, and
	\begin{align*}
	\bfG_{\mu,\shskip  d }^{\shskip \rho, \shskip m} (z) = \frac { D_{ \mu,\shskip  d}^{\shskip \rho, \shskip m} F^{\shskip \rho + \frac 1 2 m}_{ \mu+d} \lp 4/z^2 \rp F^{\shskip \rho - \frac 1 2 m}_{ \mu-d} \lp 4/\widebar z^2 \rp } {|z|^{2\rho+2\mu} (z/|z|)^{ m + 2 d} } + \frac { D_{ - \mu,\shskip - d}^{\shskip \rho, \shskip m} F^{\shskip \rho + \frac 1 2 m}_{ - \mu - d} \lp 4/z^2 \rp F^{\shskip \rho - \frac 1 2 m}_{- \mu+d} \lp 4/\widebar z^2 \rp } {|z|^{2\rho-2\mu} (z/|z|)^{m - 2 d} },
\end{align*}
	if $m + 2 d$ is odd,
of which the former is valid when	$ |\Re \mu| < \Re \rho < \bfrac 1 2 $ and the latter when 	$ |\Re \mu| - \bfrac 1 2 < \Re \rho < \bfrac 1 2 $ {\rm(}we need to take the limit   in the identities when either $\mu = d = 0$ or $\mu =  |d| = \bfrac 1 2${\rm)}. Moreover, for $|z| > 2$, the former identity is valid when	$ |\Re \mu| < \Re \rho < 1 $ and the latter when 	$ |\Re \mu| - \bfrac 1 2 < \Re \rho < 1 $.
\end{thm}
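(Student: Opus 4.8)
The plan is to treat the two integrals $\bfF_{\mu, d}^{\rho, m}$ and $\bfG_{\mu, d}^{\rho, m}$ at once by forming the single complex integral
\[
\Phi_{\mu, d}^{\rho, m} \big( y e^{i \theta} \big) = \int_{0}^{2 \pi} \int_0^\infty \bfJ_{ \mu,   d} \big( x e^{i\phi} \big) e (- 2 x y \cos (\phi + \theta) ) x^{2 \rho - 1} e^{i m \phi} d x d \phi ,
\]
so that $\Phi_{\mu, d}^{\rho, m} = \bfF_{\mu, d}^{\rho, m} - i \bfG_{\mu, d}^{\rho, m}$ by $e(-w) = \cos (2\pi w) - i \sin (2 \pi w)$. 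Under $\phi \mapsto \phi + \pi$ the relations $\bfJ_{\mu, d}(-z) = (-)^{2d} \bfJ_{\mu, d}(z)$ and $e^{i m (\phi + \pi)} = (-)^m e^{i m \phi}$ show that the $\cos$- and the $\sin$-parts of the integrand acquire the respective signs $(-)^{m + 2d}$ and $(-)^{m + 2d + 1}$; hence exactly one of $\bfF_{\mu, d}^{\rho, m}$, $\bfG_{\mu, d}^{\rho, m}$ is identically zero, and it suffices to evaluate $\Phi_{\mu, d}^{\rho, m}$. The whole apparatus of \S\S \ref{sec: hypergeometric}--\ref{sec: Proof of Theorem 1} then applies after two structural modifications: the amplitude now carries the bounded unimodular factor $e^{i m \phi} = (z/|z|)^m$, and $d$ is allowed to be a half-integer, in which case $\bfJ_{\mu, d}$ is built from $\cos (\pi \mu)$ rather than $\sin (\pi \mu)$ but still obeys the annihilation relations \eqref{2eq: nabla J = 0}.

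\textbf{Convergence and asymptotics.} First I would record the analogues of Lemmas \ref{lem: convergence of F and G}--\ref{lem: formula for G}. Since $e^{i m \phi}$ is smooth, bounded, and with bounded angular derivatives, it can be absorbed into the amplitude $f$ of Lemma \ref{lem: integration by parts} without affecting the stationary-phase bookkeeping; the regions of convergence therefore differ from those in \S \ref{sec: Proof of Theorem 1} only through the real parts, giving the stated ranges $|\Re \mu| < \Re \rho$ (with endpoint $\bfrac 1 2$ or $1$) and the shift by $\bfrac 1 2$ in the $\bfG$-case. For the leading asymptotic as $|z| \to \infty$ I would introduce the regularized companion built from $\bfR_{\mu, d}$, prove that it differs from $\Phi_{\mu, d}^{\rho, m}$ by a lower-order term exactly as in Lemma \ref{lem: F sim G}, and then evaluate it explicitly. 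Here the inner $\phi$-integral of $\bfR_{\mu, d}$ against the oscillatory kernel carries angular factor $e^{i (m \pm 2 d) \phi}$, so the version of Lemma \ref{lem: integral of P} to be invoked has Bessel order $|m \pm 2 d|$; its even/odd dichotomy produces precisely the $\sin (\pi (\rho + \mu))$ versus $\cos (\pi (\rho + \mu))$ and the factor $i$ that distinguish $C_{\mu, d}^{\rho, m}$ from $D_{\mu, d}^{\rho, m}$, together with the two leading powers $|z|^{-2\rho \mp 2 \mu} (z/|z|)^{-(m \pm 2 d)}$.

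\textbf{Differential equations.} The crux is that the factor $e^{i m \phi} = z^{m/2} \widebar z^{- m/2}$ converts the effective weight $|z|^{2\rho - 2}$ of \S \ref{sec: differential equations} into $z^{\rho + m/2 - 1} \widebar z^{\rho - m/2 - 1}$. Running the distributional integration-by-parts argument of \S \ref{sec: differential equations} verbatim, with the annihilation relations $\nabla_{\mu + d} (\bfJ_{\mu, d}(z/4\pi)) = 0$ and $\overline\nabla_{\mu - d}(\bfJ_{\mu, d}(z/4\pi)) = 0$, now yields the \emph{shifted} hypergeometric equations
\[
\nabla_{\rho + \frac 1 2 m,   \mu + d} \big( \Phi_{\mu, d}^{\rho, m}(2\sqrt u) \big) = 0, \qquad \overline\nabla_{\rho - \frac 1 2 m,   \mu - d} \big( \Phi_{\mu, d}^{\rho, m}(2\sqrt u) \big) = 0 ,
\]
where the holomorphic parameter $\rho$ of \eqref{2eq: nabla, 1}--\eqref{2eq: nabla, 2} is replaced by $\rho + \bfrac 1 2 m$ in the $z$-variable and by $\rho - \bfrac 1 2 m$ in the $\widebar z$-variable. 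This asymmetric shift is the one genuinely new point and the main place where care is needed, both in tracking the half-integer orders $\mu \pm d$ and in confirming that the same three-step manipulation (the $\partial_u$-recursions, the raising identity $4\pi i u\, \bfI_{s+1, r} = (\rho + s + r)\bfI_{s, r} + \bfI_{s, r+1}$, and the Bessel relation) closes up with the new exponents.

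\textbf{Uniqueness and conclusion.} Finally I would invoke a version of Lemma \ref{lem: hypergeometric equation} in which the two hypergeometric operators carry the independent parameters $\rho + \bfrac 1 2 m$ and $\rho - \bfrac 1 2 m$; its proof is unchanged, the non-integrality of $\mu$ forcing the two cross terms to vanish and the prescribed asymptotic pinning the two diagonal coefficients, provided $|\Re \mu|$ stays strictly below the relevant gap. Translating the solution from the $G^{(1, 2)}$-basis back to the $F^{\lambdaup}_{\varnu}$-form via \eqref{2eq: G(1,2) and F(1,2)} reproduces the asserted identities for $\bfF_{\mu, d}^{\rho, m}$ and $\bfG_{\mu, d}^{\rho, m}$. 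The weaker condition $\Re \rho < 1$ for $|z| > 2$ follows by analytic continuation from the convergence lemma, and the non-generic limits $\mu = d = 0$ and $\mu = |d| = \bfrac 1 2$ (where either $\sin (\pi \mu)$ or $\cos (\pi \mu)$ vanishes) are recovered by passing to the limit, the zero being cancelled by the gamma poles just as in the remark at the end of \S \ref{sec: Proof of Theorem 1}.
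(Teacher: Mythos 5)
You are correct, and your route is essentially the paper's own: the appendix offers no proof beyond the remark that these results ``may be proven by modifying our arguments in \S\S \ref{sec: hypergeometric} and \ref{sec: Proof of Theorem 1}'', and your elaboration --- the parity dichotomy, the unified integral $\bfF_{\mu,d}^{\rho,m} - i\,\bfG_{\mu,d}^{\rho,m}$, the appeal to Lemma \ref{lem: integral of P} with Bessel orders $|m \pm 2d|$ (whose even/odd dichotomy produces the $\sin$ versus $i\cos$ coefficients), and above all the asymmetrically shifted parameters $\rho + \frac{1}{2}m$ and $\rho - \frac{1}{2}m$ in the two hypergeometric equations, with the raising identity becoming $4\pi i u\,\bfI_{s+1,r} = \lp \rho + \frac{1}{2}m + s + r \rp \bfI_{s,r} + \bfI_{s,r+1}$ --- is exactly the intended modification and checks out. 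The single point you gloss is that for $m + 2d$ odd the integral is odd under $z \mapsto -z$, hence not a single-valued function of $u = z^2/4$, so the analogue of Lemma \ref{lem: hypergeometric equation} must be run on the $z$-plane (the double cover of the $u$-plane), where the cross terms are excluded precisely when $2\mu \notin \BZ$ rather than $\mu \notin \BZ$; this is a routine adjustment, consistent with the non-generic limits $\mu = d = 0$ and $\mu = |d| = \frac{1}{2}$ that you already flag.
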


\begin{cor}
	Set 
	\begin{equation*}
	\Delta_{ \mu,\shskip  d}^{\shskip \rho} =  \sin (\pi (\rho + \mu)) \sin (\pi (\rho - \mu)) \cdot \left\{\begin{split}
	& \cos (\pi \rho), \hskip 10 pt \text{if } 2d \text{ is even}, \\
	& i \sin   (\pi \rho), \hskip 9 pt \text{if } 2d \text{ is odd},
	\end{split} \right.
	\end{equation*}
	\begin{equation*}
	\Lambda_{ \mu,\shskip  d}^{\shskip \rho} = i  \cos (\pi (\rho + \mu)) \cos (\pi (\rho - \mu)) \cdot \left\{\begin{split}
	& i \sin (\pi \rho), \hskip 9 pt \text{if } 2d \text{ is even}, \\
	& \cos (\pi \rho), \hskip 10 pt \text{if } 2d \text{ is odd},
	\end{split} \right.
	\end{equation*}
	and
	\begin{align*}
	\Gamma^{\shskip \lambdaup}_{\varnu } = \Gamma \lp  1 / 2 - \lambdaup \rp \Gamma (\lambdaup + \varnu )  \Gamma (\lambdaup - \varnu ) .
	\end{align*}
	We have
	\begin{align*}
	\bfF_{\mu,\shskip  d }^{\shskip \rho, \shskip m} (2) =  \big(2 / \pi^3 \big) (8 \pi)^{-2\rho} \Delta_{ \mu,\shskip  d}^{\shskip \rho}   \Gamma^{\shskip \rho + \frac 1 2 m}_{\mu+d} \Gamma^{\shskip \rho - \frac 1 2 m}_{\mu-d}
	\end{align*}
	for  $ |\Re \mu|  < \Re \rho < \bfrac 1 2 $ and $m+2d$ even, and
	\begin{align*}
	\bfG_{\mu,\shskip  d }^{\shskip \rho, \shskip m} (2) =  \big(2 / \pi^3 \big) (8 \pi)^{-2\rho} \Lambda_{ \mu,\shskip  d}^{\shskip \rho} \Gamma^{\shskip \rho + \frac 1 2 m}_{\mu+d} \Gamma^{\shskip \rho - \frac 1 2 m}_{\mu-d}
	\end{align*}
	for  $ |\Re \mu| - \bfrac 1 2 < \Re \rho < \bfrac 1 2 $ and $m+2d$ odd.

	In particular, for $ |\Re \mu|  < \Re \rho < \bfrac 1 2 $ and half-integer  $d$, we have
	\begin{equation*}
	\begin{split}
	\int_{0}^{2 \pi} \hskip -1 pt \int_0^\infty & \bfJ_{ \mu,\shskip  d}    \big(  x e^{i\phi} \big)  e (- 4 x   \cos \phi )  x^{2 \rho - 1}  d x \shskip d \phi
	= \big(2 / \pi^3 \big) (8 \pi)^{-2\rho} \cos (\pi \rho) \Gamma (1/2 - \rho)^2 \, \cdot \\
	& \hskip -5 pt  \cos (\pi (\rho + \mu)) \cos (\pi (\rho - \mu)) \Gamma (\rho+\mu+d)  \Gamma (\rho+\mu-d) \Gamma (\rho-\mu+d)  \Gamma (\rho-\mu-d),
	\end{split}
	\end{equation*}
	which is an analogue of the formula in Corollary {\rm \ref{cor: d integer}}.
\end{cor}

\end{document}